\newcommand{\scriptD}{\mathscr{D}}
\newcommand{\scriptF}{\mathscr{F}}
\tikzset{>=stealth, text height=1.5ex, text depth=0.25ex}
\newcommand{\F}{{\mathbb{F}}}
\newcommand{\G}{{\mathbb{G}}}
\newcommand{\Q}{{\mathbb{Q}}}
\newcommand{\R}{{\mathbb{R}}}
\newcommand{\Z}{{\mathbb{Z}}}
\newcommand{\ZZ}{{\mathbb{Z}}}
\newcommand{\cO}{{\mathcal{O}}}
\newcommand{\cP}{{\mathcal{P}}}
\newcommand{\p}{{\mathfrak{p}}}
\newcommand{\calE}{\mathcal{E}}
\newcommand{\calF}{\mathcal{F}}
\newcommand{\calN}{\mathcal{N}}
\newcommand{\directsum}{\oplus}
\newcommand{\Directsum}{\bigoplus}
\newcommand{\isom}{\simeq}
\newcommand{\tensor}{\otimes}
\newcommand{\union}{\cup}
\newcommand{\To}{\longrightarrow}
\newcommand{\injects}{\hookrightarrow}
\newcommand{\surjects}{\twoheadrightarrow}
\newcommand{\Jacobi}[2]{\genfrac{(}{)}{}{1}{#1}{#2}}
\newcommand{\et}{\textup{\'et}}
\DeclareMathOperator{\Aut}{Aut}
\DeclareMathOperator{\Average}{Average}
\DeclareMathOperator{\Br}{Br}
\DeclareMathOperator{\Char}{char}
\DeclareMathOperator{\Cl}{Cl}
\DeclareMathOperator{\coker}{coker}
\DeclareMathOperator{\Disc}{Disc}
\DeclareMathOperator{\Frob}{Frob}
\DeclareMathOperator{\Gal}{Gal}
\DeclareMathOperator{\Pic}{Pic}
\DeclareMathOperator{\Prob}{Prob}
\DeclareMathOperator{\rank}{rank}
\DeclareMathOperator{\sgn}{sgn}
\DeclareMathOperator{\Spec}{Spec}
\newtheorem{theorem}{Theorem}[section]
\newtheorem{conjecture}[theorem]{Conjecture}
\newtheorem{lemma}[theorem]{Lemma}
\newtheorem{corollary}[theorem]{Corollary}
\newtheorem{proposition}[theorem]{Proposition}
\theoremstyle{definition}
\newtheorem{example}[theorem]{Example}
\theoremstyle{remark}
\newtheorem{remark}[theorem]{Remark}
\begin{document}

\title{Statistics of $K$-groups modulo $p$ for the ring of integers of
  a varying quadratic number field}

\subjclass[2010]{Primary 11R70; Secondary 11R29, 19D50, 19F99}
\keywords{algebraic K-theory, ring of integers, class group, Cohen--Lenstra heuristics}

\author{Bruce~W.~Jordan}
\address{Department of Mathematics, Baruch College, The City University
of New York, One Bernard Baruch Way, New York, NY 10010-5526, USA}
\email{bruce.jordan@baruch.cuny.edu}

\author{Zev~Klagsbrun}
\address{Center for Communications Research, 4320 Westerra Court, San Diego, CA 92121-1969}
\email{zevklagsbrun@gmail.com}

\author{Bjorn~Poonen}
\address{Department of Mathematics, Massachusetts Institute of Technology, Cambridge, MA 02139-4307, USA}
\email{poonen@math.mit.edu}
\urladdr{\url{http://math.mit.edu/~poonen/}}

\author{Christopher~Skinner}
\address{Department of Mathematics, Princeton University, Fine Hall, Washington Road, Princeton, NJ 08544-1000, USA}
\email{cmcls@princeton.edu}

\author{Yevgeny~Zaytman}
\address{Center for Communications Research, 805 Bunn Drive, Princeton, 
NJ 08540-1966, USA}
\email{ykzaytm@idaccr.org}

\date{August 17, 2025}

\thanks{B.P.\ was supported in part by National Science Foundation grants DMS-1601946 and DMS-2101040 and Simons Foundation grants \#402472 (to Bjorn Poonen) and \#550033.  C.S.\ was supported in part by National Science Foundation grant DMS-1301842
  and by the Simons Investigator grant \#376203 from the Simons Foundation.
This article has appeared in \emph{Tunisian J.\ Math.}\ \textbf{2} (2020), no.~2, 287--307.  The present version differs from the published version in that the proof of Lemma~\ref{L:Hochschild-Serre} has been corrected.}

\begin{abstract} 
For each odd prime $p$,
we conjecture the distribution of 
the $p$-torsion subgroup of $K_{2n}(\cO_F)$ 
as $F$ ranges over real quadratic fields,
or over imaginary quadratic fields.
We then prove that the average size of 
the $3$-torsion subgroup of $K_{2n}(\cO_F)$ 
is as predicted by this conjecture.
\end{abstract}

\maketitle

\section{Introduction}
 
The original Cohen--Lenstra heuristics \cite{Cohen-Lenstra1984} 
predicted, for each prime $p \ne 2$, 
the distribution of the $p$-primary part of $\Cl(F)$ 
as $F$ varied over quadratic fields of a given signature. 
More recent work developed heuristics
for other families of groups, including class groups of higher
degree number fields \cite{Cohen-Martinet1990}, 
Picard groups of function fields \cite{Friedman-Washington1989}, 
Tate--Shafarevich groups of elliptic curves \cites{Delaunay2001,Delaunay2007,Delaunay-Jouhet2014a}, 
Selmer groups of elliptic curves \cites{Poonen-Rains2012-selmer,Bhargava-Kane-Lenstra-Poonen-Rains2015}, 
and Galois groups of nonabelian unramified extensions 
of number fields and function fields \cites{Boston-Bush-Hajir2017,Boston-Wood2017}.

Let $\calF$ be a number field.
Let $\cO_\calF$ be the ring of integers of $\calF$.
For $m \ge 0$, the $K$-group $K_m(\cO_\calF)$ 
is a finitely generated abelian group.  
It is finite when $m$ is even and positive:
see \cite{Weibel2005}*{Theorem~7}.
Our goal is to study, for a fixed $m$ and odd prime $p$,
the $p$-torsion subgroup $K_m(\cO_\calF)_p$
as $\calF$ varies in a family of number fields, 
always ordered by absolute value of the discriminant.
As described in Section \ref{sec:odd K groups}, 
$K_m(\cO_\calF)_p$ is well understood for odd $m$.
Therefore we focus on the case $m=2n$.
Now suppose that $\calF$ is a quadratic field $F$.
The action of $\Gal(F/\Q)$ decomposes $K_{2n}(\cO_F)_p$ 
into $+$ and $-$ parts,
and we will see that the $+$ part is $K_{2n}(\Z)_p$, independent of $F$.
Therefore we focus on the variation of the $-$ part.

A Cohen--Lenstra style heuristic will lead us to the following conjecture,
involving the constants
\[
\alpha_{p,u,r} \colonequals
\frac{\prod_{i = r+1}^\infty (1 - p^{-i})}
{p^{r(u+r)} \prod_{i = 1}^{r+u} (1 - p^{-i})}
\]
for nonnegative integers $u$ and $r$:

\begin{conjecture}
\label{C:K-theory conjecture for rank}  
Fix $n \ge 1$ and an odd prime $p$ and $r \ge 0$.
As $F$ ranges over all 
real \textup{(}resp.\ imaginary\textup{)} quadratic fields, 
$\Prob \left( \dim_{\F_p} K_{2n}(\cO_F)^-_p = r \right)$
is given in the following table
by the entry in the row determined by $n$ 
and column determined by the signature:
\begin{center}
\setstretch{1.2}
\resizebox{\linewidth}{!}{
\begin{tabular}{c|c||c|c}
&& \textup{real} & \textup{imaginary} \\ \hline \hline
$\textup{$n$ even}$ & $n \equiv 0 \pmod{p-1}$ & $
\frac{1}{2} (\frac{p}{p+1}  \alpha_{p,2,r-1} + \frac{p+2}{p+1}  \alpha_{p,1,r})
$ & $
\frac{1}{2} (\frac{p}{p+1} \alpha_{p,1,r-1} + \frac{p+2}{p+1} \alpha_{p,0,r})
$
\\ 
& $n \equiv \frac{p-1}{2} \pmod{p-1}$ & $
\frac{1}{2} ( \frac{1}{p+1}\alpha_{p,2,r-1} + \frac{2p+1}{p+1}\alpha_{p,1,r})
$ & $
\frac{1}{2} ( \frac{1}{p+1}\alpha_{p,1,r-1} + \frac{2p+1}{p+1}\alpha_{p,0,r})
$
\\
& $\textup{all other cases}$ & $\alpha_{p,1,r}$ & $\alpha_{p,0,r}$
\\ \hline

$\textup{$n$ odd}$ & $n \equiv \frac{p-1}{2} \pmod{p-1}$ & 
$
\frac{1}{2} ( \frac{1}{p+1}\alpha_{p,1,r-1} + \frac{2p+1}{p+1}\alpha_{p,0,r})
$ & $
\frac{1}{2} ( \frac{1}{p+1}\alpha_{p,2,r-1} + \frac{2p+1}{p+1}\alpha_{p,1,r})
$
\\
&  $\textup{all other cases}$ & $\alpha_{p,0,r}$ & $\alpha_{p,1,r}$. \\ 
\end{tabular}
}
\setstretch{1.0}
\end{center}
\end{conjecture}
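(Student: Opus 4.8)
Since the statement above is a conjecture rather than a theorem, what I would actually carry out is a \emph{derivation} of the displayed formulas from a Cohen--Lenstra-type model; here is the plan. First I would pass to Galois cohomology. By the Quillen--Lichtenbaum conjecture --- now a theorem, via the Bloch--Kato conjecture of Voevodsky--Rost and the results surveyed in \cite{Weibel2005} --- for $n \ge 1$ one has $K_{2n}(\cO_F)_p \cong H^2_{\et}(\cO_F[1/p], \Z_p(n+1))$. As $p$ is odd, $\Z_p[\Gal(F/\Q)]$ is a product of two copies of $\Z_p$, and a standard transfer argument identifies the $+$ part with $H^2_{\et}(\Z[1/p], \Z_p(n+1)) = K_{2n}(\Z)_p$ (independent of $F$) and the $-$ part with $H^2(G_{\Q,S}, M)$, where $M = \Z_p(n+1)\tensor\chi_F$, $\chi_F$ is the quadratic character of $F$, and $S = \{\infty, p\} \union \{\ell : \ell \mid \Disc F\}$. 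Using $\operatorname{cd}_p(G_{\Q,S}) = 2$ for $p$ odd, reduction mod $p$ gives $r := \dim_{\F_p} K_{2n}(\cO_F)^-_p = \dim_{\F_p} H^2(G_{\Q,S}, \overline{M})$ with $\overline{M} = M/p = \F_p(n+1)\tensor\chi_F$, a \emph{finite} module whose Cartier dual is $\overline{M}^{*} = \F_p(-n)\tensor\chi_F = \F_p(\omega^{-n}\chi_F)$ ($\omega$ the mod-$p$ cyclotomic character). So it suffices to model $r$ as $\chi_F$ ranges over quadratic characters of a fixed signature.

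\emph{The random model.} By Poitou--Tate duality, $H^2(G_{\Q,S}, \overline{M})$ sits in an exact sequence whose other terms are a ``global $H^0$ of the dual'' group, the local terms $\bigoplus_{v\in S} H^2(\Q_v, \overline{M}) = \bigoplus_{v\in S} H^0(\Q_v, \overline{M}^{*})^\vee$, and the Pontryagin dual of the kernel of the global-to-local map $H^1(G_{\Q,S}, \overline{M}^{*}) \to \bigoplus_{v\in S} H^1(\Q_v, \overline{M}^{*})$. For $p$ odd the archimedean local term vanishes, and since $\chi_F$ --- hence $\overline{M}^{*}$ away from $p$ --- is ramified at each $\ell \mid \Disc F$, the only surviving finite-place local term is at $p$. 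Following the Cohen--Lenstra philosophy (as realized for ideal class groups by Cohen--Lenstra and for Selmer groups in \cite{Poonen-Rains2012-selmer}), I would posit that as $\chi_F$ varies the global-to-local map behaves like a uniformly random homomorphism of $\F_p$-vector spaces of the dimensions forced by the Euler-characteristic bookkeeping, so that after stripping the forced contributions $H^2(G_{\Q,S},\overline{M})$ is distributed like the cokernel of a uniformly random $d \times (d+u)$ matrix over $\F_p$ as $d \to \infty$, whose corank is $r$ with limiting probability $\alpha_{p,u,r}$. A structural point worth stressing is that $\overline{M}$ carries no $G_\Q$-equivariant pairing into $\mu_p$ --- such a pairing would force the Tate twist $2n+2$ to equal $1$ --- so $H^2(G_{\Q,S},\overline{M})$ is the dual of a Selmer group for the \emph{different} module $\overline{M}^{*}$, and no self-duality symmetry is imposed on the ensemble; this is exactly why the plain $\alpha_{p,u,r}$ (the classical Cohen--Lenstra law, as for class groups) should govern the answer, rather than the symmetric or alternating variant appropriate to Selmer and Tate--Shafarevich groups.

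\emph{Computing the shape, case by case.} It remains to determine $u \ge 0$ and any rank forced to be positive. The archimedean place contributes $\dim H^0(\R, \overline{M}^{*}) = \dim (\overline{M}^{*})^{c=1}$, which is $1$ or $0$ according to the parity of $(\omega^{-n}\chi_F)(-1) = (-1)^n \chi_F(-1)$; since $\chi_F(-1) = +1$ for $F$ real and $-1$ for $F$ imaginary, this term alone produces the ``generic'' dichotomy $\alpha_{p,0,r}$ versus $\alpha_{p,1,r}$ recorded in the last rows of the table, with dependence on the parity of $n$ and the signature exactly as tabulated. The exceptional congruences are $n \equiv 0$ and $n \equiv \frac{p-1}{2} \pmod{p-1}$: these are the two values for which the Teichm\"uller twist $\omega^{-n}$ in $\overline{M}^{*} = \F_p(\omega^{-n}\chi_F)$ is either trivial or the quadratic character $\psi$ of $\Q(\sqrt{p^\ast})$, $p^\ast = (-1)^{(p-1)/2}p$. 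When $n \equiv 0$, $\overline{M}^{*}$ becomes unramified at $p$, producing a potential extra local invariant at $p$, present exactly when $p$ splits in $F$ --- an event of density $\tfrac12$ governed by $\chi_F(p)$; this is the source of the prefactor $\tfrac12$, and tracing how the split/non-split dichotomy feeds through the duality sequence to the corank of the random part yields the weights $\tfrac{p}{p+1}, \tfrac{p+2}{p+1}$ and the shift $\alpha_{p,u,r-1}$ in the forced-rank term. When $n \equiv \frac{p-1}{2}$, $\overline{M}^{*} = \F_p(\psi\chi_F)$ is the Galois module attached to the quadratic field $F'$ cut out by $\psi\chi_F$, so the problem couples to a $p$-rank in the class-group problem for $F'$ as $F'$ varies; a parallel analysis --- now involving the ramification of $p$ in $F'$ and how the signature of $F'$ relates to that of $F$ --- produces the weights $\tfrac{1}{p+1}, \tfrac{2p+1}{p+1}$. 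Assembling the archimedean and $p$-adic contributions for $n$ even and $n$ odd and for each signature reproduces the table.

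\emph{The main obstacle.} Beyond the lengthy but essentially mechanical Euler-characteristic and local-duality bookkeeping, the genuinely delicate point is this last step at the two exceptional congruences: one must average the local behaviour at $p$ --- and, for $n \equiv \frac{p-1}{2}$, the arithmetic of $\Q(\sqrt{p^\ast})$ and its interaction with the varying family --- correctly, and then, through Poitou--Tate, pin down exactly how it couples to the corank of the random part, so as to arrive at the precise coefficients $\tfrac{p}{p+1}, \tfrac{p+2}{p+1}$ and $\tfrac{1}{p+1}, \tfrac{2p+1}{p+1}$ rather than some other admissible split. At the level of heuristic the underlying leap of faith --- that the global-to-local map equidistributes like a random matrix of the prescribed size, uniformly in the growing set $S$ of ramified primes of $F$ --- is, as always in Cohen--Lenstra theory, simply assumed; it is the unconditional theorem for $p = 3$ proved below that supplies the real evidence.
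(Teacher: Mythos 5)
Your translation of the problem into Galois cohomology is a legitimate repackaging of what the paper does: your $H^2(G_{\Q,S},\overline M)$ with $\overline M=\F_p(n+1)\otimes\chi_F$ corresponds to the paper's decomposition of $(K_{2n}(\cO_F)/p)^-$ into a class-group part and a Brauer part over $E=F(\zeta_p)$ (Theorem~\ref{T:K_2n for quadratic field}), your Euler-characteristic/$H^0$ bookkeeping at $\infty$ and $p$ corresponds to the unit-rank computation of $u$ (Lemma~\ref{L:u-lemma}, Proposition~\ref{P:u-values for quadratic fields}), and your "no self-duality, hence plain $\alpha_{p,u,r}$" point matches the random-cokernel model (Conjecture~\ref{C:class group heuristic}). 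One caveat you pass over quickly: because your $S$ contains the primes $\ell\mid \Disc F$, your $H^2(G_{\Q,S},\overline M)$ is a priori the cohomology of a larger localization than $\cO_F[1/p]$; it does agree with the minus part, but only because $\chi_F$ (hence the relevant character over $E$) is ramified at each such $\ell$, so the extra Gysin/local contributions have trivial isotypic component. That should be checked, not just asserted. Your generic rows ($\alpha_{p,0,r}$ vs.\ $\alpha_{p,1,r}$, governed by $(-1)^n\chi_F(-1)$) do come out correctly.

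The genuine gap is exactly where you say the "delicate point" lies: the coefficients in the two exceptional rows, which are the actual content of the table, are never derived, and your sketch mislocates their source. There is no subtle "coupling through Poitou--Tate" to work out: within each fixed $p$-adic class of fundamental discriminants the prediction is a single shifted or unshifted $\alpha$ (Conjecture~\ref{C:K-theory conjecture}: $\alpha_{p,u,r-1}$ when the local term at $p$ forces a $\Z/p\Z$ quotient and simultaneously raises $u$ by $1$, and $\alpha_{p,u,r}$ otherwise), so the only remaining input is the \emph{density} of each $p$-adic class among fundamental discriminants of given sign. Your claim that "$p$ splits in $F$" has density $\tfrac12$ is wrong: fundamental discriminants equidistribute among the $p^2-1$ nonzero residue classes mod $p^2$ (the paper cites Prachar), so $\Prob(d\in\Q_p^{\times2})=\frac{p(p-1)/2}{p^2-1}=\frac{p}{2p+2}$, and for $n\equiv\frac{p-1}{2}\pmod{p-1}$ the relevant event is $p^*d\in\Q_p^{\times2}$, which forces $p\mid d$ and has density $\frac{1}{2p+2}$ --- not an event of density $\tfrac12$ at all. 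These densities, combined with the already-determined $\alpha$'s, immediately give $\frac{p}{2p+2}\alpha_{p,2,r-1}+\frac{p+2}{2p+2}\alpha_{p,1,r}$, etc.\ (Example~\ref{Ex:upper left}); the factor $\tfrac12$ in the statement is just a way of writing these weights, not a splitting probability. Since you explicitly defer obtaining $\frac{p}{p+1},\frac{p+2}{p+1},\frac{1}{p+1},\frac{2p+1}{p+1}$ "rather than some other admissible split" to an unspecified argument, the proposal does not reach the stated table in precisely the rows that distinguish it.
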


To pass from the distribution of $\dim_{\F_p} K_{2n}(\cO_F)^-_p$ 
to that of $\dim_{\F_p} K_{2n}(\cO_F)_p$ itself, 
add the constant $\dim_{\F_p} K_{2n}(\Z)_p$, 
which can be expressed in terms of a class group 
(see Section~\ref{S:even K-groups of Z}).

Conjecture \ref{C:K-theory conjecture for rank} 
implies an average order for $K_{2n}(\cO_F)_p$ 
as $F$ varies over all real or imaginary fields 
(see Conjecture \ref{C:average K-theory conjecture}).  
We prove that this conjectured average order is correct for $p = 3$:

\begin{theorem}
\label{T:average of K_3} 
Fix $n \ge 1$.
The average order of $K_{2n}(\cO_F)_3$
as $F$ ranges over 
real \textup{(}resp.\ imaginary\textup{)} quadratic fields 
is as follows:
\begin{center}
\begin{tabular}{c||c|c}
& $\textup{real}$ & $\textup{imaginary}$ \\ \hline \hline
$\textup{$n$ even}$ & $25/12$ & $11/4$ \\ \hline
$\textup{$n$ odd}$ & $9/4$ & $19/12$. \\ 
\end{tabular}
\end{center}
\end{theorem}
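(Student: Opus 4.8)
The plan is to reduce the statement to the Davenport--Heilbronn theorem on $3$-torsion in class groups of quadratic fields. The first step is $K$-theoretic and disposes of the $+$ part. Since $[F:\Q]=2$ is invertible modulo $3$, the transfer--restriction composition is multiplication by $2$, so $K_{2n}(\Z)_3\to K_{2n}(\cO_F)_3^+$ is an isomorphism and $K_{2n}(\cO_F)_3\cong K_{2n}(\Z)_3\oplus K_{2n}(\cO_F)_3^-$ with the first summand independent of $F$. For $p=3$ that summand vanishes: by the Quillen--Lichtenbaum conjecture (now a theorem, via the Voevodsky--Rost norm residue theorem) it is the $3$-torsion of $H^2_{\et}(\Z[1/3],\Z_3(n+1))$, which the main conjecture of Iwasawa theory governs by the class number of $\Q(\mu_3)=\Q(\sqrt{-3})$, equal to $1$, together with numerators of Bernoulli numbers $B_k$, none of which is divisible by $3$ (for even $k$ the prime $3$ occurs in the denominator of $B_k$, by von Staudt--Clausen). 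Hence it suffices to compute the average of $3^{\dim_{\F_3}K_{2n}(\cO_F)_3^-}$.

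The second step identifies this $-$ part with a class group, up to bounded local corrections at $3$. Again by Quillen--Lichtenbaum the $3$-primary part of $K_{2n}(\cO_F)$ is $H^2_{\et}(\cO_F[1/3],\Z_3(n+1))$, so $K_{2n}(\cO_F)_3$ is its $3$-torsion; modulo $3$ the twist $\Z_3(n+1)$ is the trivial module when $n$ is odd and is $\mu_3$ (equivalently, carries the quadratic character $\omega$ cutting out $\Q(\sqrt{-3})$) when $n$ is even. Feeding this into the Poitou--Tate nine-term sequence over $F$, with ramification allowed only at the places above $3$ and $\infty$, and working out the local cohomology of $\Z_3(n+1)$ at those places (including the local Brauer-group terms at the primes above $3$), one rewrites $H^2_{\et}(\cO_F[1/3],\Z_3(n+1))^-$ as the $3$-torsion of $\Cl(\cO_{F'})$ times a factor $\lambda(F)$ supported above $3$ that depends only on $n\bmod 2$ and on how $3$ decomposes in $F$. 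Here $F'=F$ when $n$ is odd (the $+$ part of $\Cl(\cO_F)$ being $\Cl(\Q)=1$), while when $n$ is even a Scholz-type reflection exact sequence identifies the relevant component with the $3$-class group of the mirror field $F'=\Q(\sqrt{-3\,\Disc F})$. In either case the quantity to average becomes $3^{\dim_{\F_3}\Cl(\cO_{F'})[3]}\cdot\lambda(F)$, where $\lambda(F)$ is a power of $3$ taking finitely many values sorted by the splitting of $3$.

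The third step is statistical. Partition the real (resp.\ imaginary) quadratic fields by how $3$ decomposes; the densities of the types are elementary. On each piece $\lambda$ is constant, and the average of $|\Cl(\cO_{F'})[3]|$ is supplied by the Davenport--Heilbronn theorem in the form allowing a splitting condition at $3$ (for $n$ even composed with the correspondence $d\mapsto -3d$, which interchanges the two signatures in a density-compatible way), building on $\Average\,|\Cl(\cO_F)[3]| = 4/3$ for $F$ real and $=2$ for $F$ imaginary. Summing the contributions weighted by the splitting densities, and multiplying by $|K_{2n}(\Z)_3|=1$, yields the four entries $25/12$, $11/4$, $9/4$, $19/12$; one also checks that these agree with the averages implied by Conjecture~\ref{C:K-theory conjecture for rank}.

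The main obstacle is the second step: pinning down the precise local factors $\lambda(F)$ at $3$ and the way the $\pm$-decomposition interacts with the archimedean places (for $p=3$ this is harmless, as narrow and wide class groups differ by a $2$-group), and then controlling the joint distribution of $\lambda(F)$ with the $3$-rank of $\Cl(\cO_{F'})$ finely enough that the density-weighted sum in the third step collapses to exactly the stated rational numbers. By contrast the Davenport--Heilbronn input, even with local conditions, can be treated as a black box.
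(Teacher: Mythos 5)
Your overall route is essentially the paper's: rewrite $K_{2n}(\cO_F)/3$ as $H^2(\cO_F[1/3],\mu_3^{\otimes(n+1)})$, split off an explicit local (Brauer-type) factor at $3$, identify the remaining piece with the $3$-part of a quadratic field's class group, and average by Davenport--Heilbronn-type counts with conditions at $3$. But the key identification in your second step is backwards. When $n$ is \emph{even}, the twist is $\mu_3^{\otimes(n+1)}\cong\mu_3$, and Kummer theory over $F$ itself gives $H^2(\cO_F[1/3],\mu_3)\cong \Cl(\cO_F[1/3])/3\oplus\Br(\cO_F[1/3])_3$: no reflection enters, and the relevant field is $F$, not the mirror field. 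When $n$ is \emph{odd}, the twist is trivial, and it is then that one must pass to $E=F(\zeta_3)$ and untwist: $H^2(\cO_F[1/3],\Z/3)=\bigl(H^2(\cO_E[1/3],\mu_3)\otimes\omega^{-1}\bigr)^{\Gal(E/F)}$ picks out the isotypic component for the character of $\Gal(E/\Q)$ nontrivial on both $\Gal(E/F)$ and $\Gal(E/\Q(\zeta_3))$, whose fixed field is $\Q(\sqrt{-3d})$; so for $n$ odd the class-group part is $\Cl(\cO_{\Q(\sqrt{-3d})}[1/3])/3$. (This also matches the classical $n=1$ picture, where $3$-divisibility of the tame kernel of $\Q(\sqrt d)$ is governed by the class group of $\Q(\sqrt{-3d})$.) With your assignment the $n$-even and $n$-odd rows of the table get interchanged (or other incorrect values result, depending on how the unspecified local factors are matched to the parity), so the computation as set up cannot produce the stated numbers $25/12,\,11/4,\,9/4,\,19/12$.

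There is a second gap, which you partly flag yourself but which is not merely a technicality. The quantity to average is $\#\Cl(\cO_{K}[1/3])/3$ (times the explicit Brauer factor, which \emph{is} local at $3$), and $\Cl(\cO_{K}[1/3])$ is the quotient of $\Cl(\cO_{K})$ by the classes of the primes above $3$; the size of that quotient is not a function of how $3$ decomposes in $K$, so your ansatz $3^{\dim_{\F_3}\Cl(\cO_{F'})[3]}\cdot\lambda(F)$ with $\lambda$ determined by local data at $3$ is false, and the joint statistics you defer are exactly the hard analytic content. The paper resolves this by converting $\#\Cl(\cO_K[1/3])/3$ into a count of cubic fields $L$ with $d_L=d_K$, nowhere totally ramified, and with $3$ split completely in $L$ whenever $3$ splits in $K$ (Corollaries \ref{C:generalized Delone-Faddeev} and \ref{C:class groups to cubic fields}), and then applying the Bhargava--Shankar--Tsimerman refinement of Davenport--Heilbronn, which allows prescribing the cubic ring at $3$ (Theorem \ref{thm:BST}); the classical Davenport--Heilbronn theorem with congruence conditions on discriminants alone does not yield this, so this input cannot be treated as a black box in the form you describe.
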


\begin{remark} 
By Theorem \ref{T:K-groups of Z}, $K_{2n}(\ZZ)_3 = 0$ for all $n$ 
since $\Q(\zeta_3)$ has class number~$1$. 
Thus $K_{2n}(\cO_F)^-_3 = K_{2n}(\cO_F)_3$ for all $n$.
\end{remark}

\begin{remark} 
Theorem \ref{T:average of K_3} is an analogue of 
the Davenport--Heilbronn theorem giving the average order of $\Cl(F)_3$ 
as $F$ varies over all real or imaginary quadratic 
fields~\cite{Davenport-Heilbronn1971}*{Theorem~3}.
\end{remark}

\begin{remark}
After this article was written, the second author proved an analogue
of Theorem~\ref{T:average of K_3} for $K_{2n}(\cO_\calF)_2$
as $\calF$ varies over cubic fields \cite{Klagsbrun-preprint2}.
\end{remark}

\subsection{Methods}
The $p$-torsion subgroup $G_p$ of a finite abelian group $G$ 
has the same $\F_p$-dimension as $G/p \colonequals G/pG$;
therefore we study $K_{2n}(\cO_F)/p$.
The latter is isomorphic to an \'etale cohomology group
$H^2_{\et}(\cO_F[1/p],\mu_p^{\tensor (n+1)})$,
which we relate to isotypic components of the class group
and Brauer group of $\cO_E[1/p]$, where $E \colonequals F(\zeta_p)$.
The Brauer group can be computed explicitly,
and we develop heuristics for the class groups;
combining these gives the conjectural distribution of $K_{2n}(\cO_F)_p$.

In the case $p=3$, the isotypic components of $\Cl(\cO_E[1/p])$
are related to $\Cl(\cO_K[1/p])$ for quadratic fields $K$.
The \emph{average order} of the latter class groups can be computed
unconditionally by using a strategy of Davenport and Heilbronn,
which we refine using recent work of Bhargava, Shankar, and Tsimerman,
to control averages in subfamilies
with prescribed local behavior at $3$.
This yields unconditional results on the average order of 
$K_{2n}(\cO_F)_3$.

\subsection{Prior work}
As far as we know, Cohen--Lenstra style conjectures 
have not been proposed for $K$-groups before, 
but some results on the distribution of $K_2(\cO_F)$ have been proved.

Guo~\cite{Guo2009} proved that $4$-ranks of $K_2(\cO_F)$ 
for quadratic fields $F$ follow a Cohen--Lenstra distribution, 
just as Fouvry and Kl\"uners proved for $4$-ranks 
of $\Cl(\cO_F)$ \cite{Fouvry-Klueners2007}.
Studying $4$-ranks is natural,
since the $2$-rank of $K_2(\cO_F)$ for a quadratic field $F$ 
is determined by genus theory 
just as the $2$-rank of $\Cl(\cO_F)$ is 
(see \cite{Browkin-Schinzel1982}, for example).
 
Similar results on the $3$-ranks of $K_2(\cO_L)$ 
for cyclic cubic fields $L$ are due to 
Cheng, Guo, and Qin \cite{Cheng-Guo-Qin2014}.
In addition, Browkin showed that Cohen--Martinet heuristics 
suggest a conjecture on $\Prob(3 \mid \#K_2(\cO_F))$
as $F$ ranges over quadratic fields of fixed signature \cite{Browkin2000}.

\subsection{Notation}
\label{S:notation}

If $G$ is an abelian group and $n \ge 1$,
let $G_n \colonequals \{g \in G:ng=0\}$
and $G/n \colonequals G/nG$.
For any $k$-representation $V$ of a finite group $G$
such that $\Char k \nmid \#G$, 
and for any irreducible $k$-representation $\chi$ of $G$,
let $V^\chi$ be the $\chi$-isotypic component.

Throughout the paper, $p$ is an odd prime, 
$\calF$ is an arbitrary number field,
$\overline{\calF}$ is an algebraic closure of $\calF$,
the element $\zeta_p \in \overline{\calF}$ is a primitive $p$th root of unity,
and $\calE \colonequals \calF(\zeta_p)$.
Later we specialize $\calF$ to a quadratic field $F$
and define $E \colonequals F(\zeta_p)$.

Let $\cO_\calF$ be the ring of integers of $\calF$.
If $S$ is a finite set of places of $\calF$ containing 
all the archimedean places,
define the ring of $S$-integers
$\cO_S := \{x \in \calF: v(x) \ge 0 \textup{ for all $v \notin S$}\}$.
Let $d_\calF \in \Z$ be the discriminant of $\calF$.
Let $\mu(\calF)$ be the group of roots of unity in $\calF$.

If $\cO$ is a Dedekind ring, let $\Cl(\cO)$ denote its class group,
and let $\Br(\cO)$ be its (cohomological) Brauer group, defined as 
$H^2_{\et}(\Spec \cO,\G_m)$ \cite{Poonen2017}*{Definition~6.6.4}.
{}From now on, all cohomology is \'etale cohomology, 
and we drop the subscript $\et$.

\section{From \texorpdfstring{$K$}{K}-theory to class groups and Brauer groups}
\label{S:from K-theory}

In this section, following Tate's argument for $K_2$ \cite{Tate1976}, 
we relate the even $K$-groups
to more concrete groups: class groups and Brauer groups.

\begin{theorem}[Corollary 71 in \cite{Weibel2005}]
\label{T:K-theory and etale cohomology} 
  For any number field $\calF$ and any $n \ge 1$,
  \[
  K_{2n}(\cO_\calF)/p \isom H^2(\cO_\calF[1/p],\mu_p^{\tensor (n+1)}).
  \]
\end{theorem}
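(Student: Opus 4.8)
The plan is to route the computation through mod-$p$ algebraic $K$-theory $K_{2n}(\cO_\calF;\Z/p)$, comparing it on one side with $K_{2n}(\cO_\calF)/p$ and on the other with \'etale cohomology by means of the Quillen--Lichtenbaum theorem, now a theorem as a consequence of the norm residue (Bloch--Kato) isomorphism of Voevodsky, Rost, and others. Two facts get used repeatedly. First, since $p$ is odd, the $p$-cohomological dimension of $\Spec\cO_\calF[1/p]$ is $2$, so $H^j(\cO_\calF[1/p],\mu_p^{\otimes i})=0$ for all $j\ge 3$ and all $i$. Second, each residue field $\F_\p$ at a prime $\p\mid p$ has $\#\F_\p^\times$ prime to $p$, so Quillen's computation of the $K$-theory of finite fields gives $K_m(\F_\p;\Z/p)=0$ for every $m\ge 1$.

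First I would reduce from $\cO_\calF$ to $\cO_\calF[1/p]$. The localization sequence in mod-$p$ $K$-theory for the open immersion $\Spec\cO_\calF[1/p]\hookrightarrow\Spec\cO_\calF$ has relative terms $\bigoplus_{\p\mid p}K_m(\F_\p;\Z/p)$ (by d\'evissage), which vanish for $m=2n$ and $m=2n-1$ since $n\ge 1$; hence $K_{2n}(\cO_\calF;\Z/p)\isom K_{2n}(\cO_\calF[1/p];\Z/p)$. By Quillen--Lichtenbaum this agrees with the \'etale $K$-group $K^{\et}_{2n}(\cO_\calF[1/p];\Z/p)$, for which there is a descent spectral sequence assembled from the \'etale cohomology groups $H^s(\cO_\calF[1/p],\mu_p^{\otimes t})$. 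The only $E_2$-terms that can contribute to total degree $2n$ are $H^0(\cO_\calF[1/p],\mu_p^{\otimes n})$ and $H^2(\cO_\calF[1/p],\mu_p^{\otimes(n+1)})$, and every differential touching them lands in a vanishing group $H^j$ with $j\ge 3$ or in a zero column; so the spectral sequence degenerates, giving a short exact sequence
\[
0\to H^2(\cO_\calF[1/p],\mu_p^{\otimes(n+1)})\to K_{2n}(\cO_\calF;\Z/p)\to H^0(\cO_\calF[1/p],\mu_p^{\otimes n})\to 0.
\]

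Finally I would compare this with the universal coefficient sequence
\[
0\to K_{2n}(\cO_\calF)/p\to K_{2n}(\cO_\calF;\Z/p)\to K_{2n-1}(\cO_\calF)_p\to 0
\]
and with the standard description of the $p$-torsion of the odd $K$-group, $K_{2n-1}(\cO_\calF)_p\isom H^0(\cO_\calF[1/p],\mu_p^{\otimes n})$ (see \cite{Weibel2005}), which is precisely the $H^0$ appearing above. Both sequences present $K_{2n}(\cO_\calF;\Z/p)$ as an extension with the same quotient, so $K_{2n}(\cO_\calF)/p$ and $H^2(\cO_\calF[1/p],\mu_p^{\otimes(n+1)})$ are $\F_p$-vector spaces of the same dimension, hence isomorphic; a little extra care with the edge map (which is a Bockstein) shows that the two in fact coincide as subgroups of $K_{2n}(\cO_\calF;\Z/p)$. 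The single genuinely deep ingredient is the Quillen--Lichtenbaum comparison of algebraic and \'etale $K$-theory in this range of degrees, which rests on the norm residue theorem; the cohomological dimension bound, the localization sequence, and the matching of the two exact sequences are routine.
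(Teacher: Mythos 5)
The paper does not actually prove this statement: it quotes it directly as Corollary~71 of Weibel's survey \cite{Weibel2005}. What you have written is, in outline, the standard argument underlying that citation, and it is essentially correct: localization plus d\'evissage with $K_m(\F_\p;\Z/p)=0$ for $m\ge 1$ handles the passage from $\cO_\calF$ to $\cO_\calF[1/p]$; Quillen--Lichtenbaum (via the norm residue theorem) and the descent spectral sequence with $\operatorname{cd}_p(\cO_\calF[1/p])=2$ give the extension of $H^0(\cO_\calF[1/p],\mu_p^{\otimes n})$ by $H^2(\cO_\calF[1/p],\mu_p^{\otimes(n+1)})$; and the universal coefficient sequence together with $K_{2n-1}(\cO_\calF)_p \isom H^0(\cO_\calF[1/p],\mu_p^{\otimes n})$ forces the two subgroups to have the same $\F_p$-dimension. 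One small imprecision: the degeneration is not because differentials ``land in a zero column'' in general, but because $d_2$ maps an even-twist row to an odd row (which vanishes) and $d_r$ for $r\ge 3$ lands in cohomological degree $\ge 3 > \operatorname{cd}_p$; your conclusion is right, but state it that way.

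One caveat worth flagging: your final step proves only an abstract isomorphism of $\F_p$-vector spaces by dimension count, which does suffice for the statement as literally written. However, the paper later uses this identification functorially in $\calF$ (e.g.\ in the proof of Theorem~\ref{T:K_2n for quadratic field}, where Lemma~\ref{L:Hochschild-Serre} is applied after ``rewriting each term as an \'etale cohomology group''), so the Galois-equivariant, natural form of the isomorphism from \cite{Weibel2005} is what is really being invoked. Your parenthetical about the edge map being compatible with the Bockstein is exactly the point that upgrades the counting argument to a natural identification, but as written it is asserted rather than proved; if you want your argument to support the paper's subsequent uses, that compatibility (naturality of the comparison map and of the two short exact sequences in the ring) is the piece to make precise.
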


\begin{lemma}
  \label{L:H^2 of mu_p}
There is a canonical exact sequence
  \[
  	0 \To \Cl(\cO_\calF[1/p])/p \To H^2(\cO_\calF[1/p],\mu_p) \To \Br(\cO_\calF[1/p])_p \To 0.
  \]
\end{lemma}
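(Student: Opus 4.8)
Write $R \colonequals \cO_\calF[1/p]$; this is a Dedekind domain on which $p$ is invertible. The plan is to read off the claimed sequence from the long exact \'etale cohomology sequence attached to the Kummer sequence of sheaves on $\Spec R$,
\[
1 \To \mu_p \To \Gm \xrightarrow{\ p\ } \Gm \To 1 ,
\]
which is exact precisely because $p \in R^\times$; this is the analogue, for rings of $S$-integers, of Tate's argument for $K_2$.

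Applying $H^\bullet(\Spec R,-)$ and using $H^1(\Spec R,\Gm) = \Pic(R) = \Cl(R)$ (the first equality is descent for line bundles, the second holds since $R$ is Dedekind) together with $H^2(\Spec R,\Gm) = \Br(R)$ (the definition of the Brauer group used here), the relevant portion of the long exact sequence reads
\[
\Cl(R) \xrightarrow{\ p\ } \Cl(R) \To H^2(\cO_\calF[1/p],\mu_p) \To \Br(R) \xrightarrow{\ p\ } \Br(R) .
\]
Splitting this four-term exact sequence into a short one: the cokernel of $\Cl(R) \xrightarrow{p} \Cl(R)$ is $\Cl(R)/p$ by definition, and the kernel of $\Br(R) \xrightarrow{p} \Br(R)$ is $\Br(R)_p$ by definition, so we obtain the exact sequence
\[
0 \To \Cl(\cO_\calF[1/p])/p \To H^2(\cO_\calF[1/p],\mu_p) \To \Br(\cO_\calF[1/p])_p \To 0 .
\]
Every map is induced by the morphisms in the Kummer sequence and the canonical identifications above, so the sequence is canonical.

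There is essentially no obstacle here beyond assembling standard facts: the exactness of the Kummer sequence on $\Spec R$, the identification $H^1(\Spec R,\Gm) = \Cl(R)$, and the (definitional) identification $H^2(\Spec R,\Gm) = \Br(R)$. The only point worth a word of care is that one must work on $\Spec \cO_\calF[1/p]$ rather than $\Spec \cO_\calF$, so that $p$ is a unit and $\mu_p$ is a locally constant \'etale sheaf — which is precisely the reason the rings of $S$-integers were introduced in Section~\ref{S:notation}.
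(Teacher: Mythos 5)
Your proof is correct and is essentially identical to the paper's: both take the long exact cohomology sequence of the Kummer sequence on $(\Spec \cO_\calF[1/p])_{\et}$ and substitute $H^1(\cO_\calF[1/p],\G_m) = \Pic = \Cl$ and $H^2(\cO_\calF[1/p],\G_m) = \Br$. Your added remarks on why $p$ must be inverted are fine but not a difference in method.
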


\begin{proof}
  Consider the exact sequence
  \[
  	1 \to \mu_p \to \G_m \stackrel{p}\to \G_m \to 1
  \]
  of sheaves on $(\Spec \cO_\calF[1/p])_{\et}$.
Take the associated long exact sequence of cohomology,
and substitute $H^1(\cO_\calF[1/p],\G_m) = \Pic(\cO_\calF[1/p]) = \Cl(\cO_\calF[1/p])$
  and $H^2(\cO_\calF[1/p],\G_m) = \Br(\cO_\calF[1/p])$.
\end{proof}

\begin{lemma}
  \label{L:Hochschild-Serre}
  Let $\calF'/\calF$ be a finite Galois extension of degree prime to $p$.
  Let $i \ge 0$ and $r \in \Z$.
  Then 
  \[
	H^i(\cO_\calF[1/p],\mu_p^{\tensor r}) =  H^i(\cO_{\calF'}[1/p],\mu_p^{\tensor r})^{\Gal(\calF'/\calF)}.
  \]
\end{lemma}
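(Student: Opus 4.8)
The plan is to apply the Hochschild--Serre spectral sequence for the étale covering $\Spec \cO_{\calF'}[1/p] \to \Spec \cO_{\calF}[1/p]$, whose Galois group is $G \colonequals \Gal(\calF'/\calF)$. Note first that $\cO_{\calF'}[1/p]$ is the integral closure of $\cO_{\calF}[1/p]$ in $\calF'$, and since $\calF'/\calF$ is unramified outside $p$... actually wait — $\calF'/\calF$ need not be unramified outside $p$.

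Let me reconsider. The cleaner approach: the extension $\cO_{\calF'}[1/p']/\cO_{\calF}[1/p]$ where $p'$ denotes primes above $p$; for any finite Galois extension of number fields, inverting the primes of $\calF$ above $p$ and taking integral closure gives a finite Galois extension of Dedekind rings, but it may still be ramified at primes not dividing $p$. So Hochschild--Serre in its simplest form doesn't directly apply to the small étale site.

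Here is the plan I would actually carry out. Use the Hochschild--Serre spectral sequence
\[
E_2^{a,b} = H^a\!\left(G, H^b(\cO_{\calF'}[1/p], \mu_p^{\tensor r})\right) \Longrightarrow H^{a+b}(\cO_{\calF}[1/p], \mu_p^{\tensor r}),
\]
which is valid for the $G$-covering $\Spec \cO_{\calF'}[1/p] \to \Spec \cO_{\calF}[1/p]$ in the context of the lisse-étale or fppf topology — or more simply, one works over $\Spec \cO_{\calF}[1/p]$ with the finite étale $G$-torsor obtained after also inverting the finitely many primes of $\calF$ below the ramification locus of $\calF'/\calF$; since those extra primes don't divide $p$, and one ultimately only needs the statement for the étale cohomology of $\cO_{\calF}[1/p]$, a limit/excision argument reduces to that case. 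The key input is that $\#G$ is prime to $p$, so $H^a(G, M) = 0$ for all $a \geq 1$ whenever $M$ is a $\mathbb{Z}/p$-module (as $\#G$ acts invertibly on $M$ and also kills $H^a(G,M)$ for $a\geq 1$). Since $\mu_p^{\tensor r}$ is a $\mathbb{Z}/p$-sheaf, all higher cohomology groups $H^b(\cO_{\calF'}[1/p], \mu_p^{\tensor r})$ are $\mathbb{Z}/p$-modules, so the spectral sequence degenerates: $E_2^{a,b} = 0$ for $a \geq 1$, and $E_2^{0,i} = H^i(\cO_{\calF'}[1/p], \mu_p^{\tensor r})^{G}$. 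Hence $H^i(\cO_{\calF}[1/p], \mu_p^{\tensor r}) \cong E_2^{0,i}$, which is the claimed equality.

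The main obstacle to state carefully is the ramification subtlety just mentioned: the covering $\Spec \cO_{\calF'}[1/p] \to \Spec \cO_{\calF}[1/p]$ is \emph{not} finite étale in general, only generically so. I would handle this by enlarging the inverted set: let $S$ be the set of places of $\calF$ lying over $p$ together with all places ramifying in $\calF'$; then $\cO_S' / \cO_S$ (integral closures) \emph{is} finite étale Galois with group $G$, Hochschild--Serre applies on the nose, and the argument above gives $H^i(\cO_S, \mu_p^{\tensor r}) = H^i(\cO_S', \mu_p^{\tensor r})^G$. To descend back to $\cO_{\calF}[1/p]$, one checks that both sides are insensitive to the extra primes: on the left via the localization sequence together with the fact (already used implicitly in Lemma~\ref{L:H^2 of mu_p}-type computations) that for a place $v \nmid p$ the local contributions $H^i(k(v), \mu_p^{\tensor r})$ vanish for $i \geq 2$ and are controlled in degrees $0,1$ by Kummer theory, and compatibly on the right under the $G$-action — so taking $G$-invariants commutes with removing those primes. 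This reduction is routine but is the one place requiring attention; once it is in place, the prime-to-$p$ vanishing of group cohomology finishes the proof immediately.
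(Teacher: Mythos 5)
Your proposal is correct in substance, and it diverges from the paper exactly at the point you flagged. The paper's proof is a one-liner: it invokes the Hochschild--Serre spectral sequence directly for $\Spec \cO_{\calF'}[1/p] \to \Spec \cO_{\calF}[1/p]$ and kills the $E_2$-terms $H^i(\Gal(\calF'/\calF),-)$ for $i>0$ because they are annihilated by both $\#\Gal(\calF'/\calF)$ and $p$; it says nothing about the covering failing to be \'etale, even though the lemma is later applied to genuinely ramified situations (e.g.\ $F/\Q$ in Theorem~\ref{T:K_2n for quadratic field}, and $E/\Q(\zeta_p)$ in Proposition~\ref{P:p-torsion in Br for quadratic fields}). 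So your caution is warranted, and your workaround --- enlarge $S$ so that $\cO'_S/\cO_S$ is an honest $G$-torsor, apply Hochschild--Serre there, then descend --- does work. Its cost is the descent step, which is the one part you only wave at, and your description of it is slightly off: the terms measuring the difference are cohomology with supports in the extra closed points, which by purity are $H^{i-2}(k(v),\mu_p^{\tensor(r-1)})$, nonzero only in degrees $i=2,3$, rather than ``Kummer theory in degrees $0,1$.'' The comparison does go through: take $G$-invariants of the localization sequence over $\calF'$ (exact, since $\#G$ is prime to $p$), and match the support terms with those over $\calF$ using that the decomposition groups at the extra primes act on these local terms through prime-to-$p$ quotients, so restriction is an isomorphism onto invariants; then apply the five lemma. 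Also, the parenthetical appeal to the fppf or lisse-\'etale topology does not help: a ramified cover is not a torsor in any topology.

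A cleaner way to close the same gap without enlarging $S$, and which also explains why the paper's direct use of the spectral sequence is harmless: let $\pi\colon \Spec\cO_{\calF'}[1/p]\to\Spec\cO_{\calF}[1/p]$ and $F=\mu_p^{\tensor r}$. Since $\pi$ is finite, $H^i(\cO_{\calF'}[1/p],\pi^*F)=H^i(\cO_{\calF}[1/p],\pi_*\pi^*F)$; the idempotent $\tfrac{1}{\#G}\sum_{g\in G}g$ acts on the sheaf of $\F_p[G]$-modules $\pi_*\pi^*F$, so taking $G$-invariants commutes with cohomology; and the adjunction map $F\to(\pi_*\pi^*F)^G$ is an isomorphism on stalks because the stabilizer of a geometric point in a fiber acts trivially on the stalk of a pulled-back sheaf. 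Equivalently, with prime-to-$p$ stabilizers the Cartan--Leray/equivariant form of the spectral sequence computes $H^*(\cO_{\calF}[1/p],\mu_p^{\tensor r})$ even though the covering is ramified, which is the implicit justification behind the paper's terse argument.
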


\begin{proof}
  Let $G=\Gal(\calF'/\calF)$.
  Let $X=\Spec \cO_{\calF}[1/p]$ and $X'=\Spec \cO_{\calF'}[1/p]$.
  Let $M = \mu_p^{\tensor r}$.
If $X' \to X$ were \'etale, we would have the Hochschild--Serre spectral sequence
  \[
  	H^i(G,H^j(X',M)) \implies H^{i+j}(X,M),
  \]
and the groups $H^i(G,H^j(X',M))$ for $i>0$
are $0$ because they are killed by both $\#G$ and $\#M=p$,
so the result would follow.

In general, let $U$ be a dense open subscheme of $X$ above which $X' \to X$ is \'etale.
  Let $Z=X-U$.
  Let $U'$ and $Z'$ be the preimages of $U$ and $Z$ under $X' \to X$;
  give $Z$ and $Z'$ the reduced scheme structure.
  The Gysin sequence
  \[
  	\cdots \to H^i_Z(X,M) \to  H^i(X,M) \to H^i(U,M) \to \cdots
  \]
  is exact \cite[Proposition~III.1.25]{Milne1980}.
  Let $N = M(-1) = \mu_p^{\tensor (r-1)}$.
  Purity for closed embeddings of regular schemes replaces $H^i_Z(X,M)$ with $H^{i-2}(Z,N)$ for each $i$; see \cite{Fujiwara2002} and the second paragraph of the proof of \cite[Theorem~VI.5.1]{Milne1980}.
  Taking $G$-invariants of the analogous Gysin sequence for $(X',Z')$ preserves exactness since $p \nmid \#G$.
  Thus we obtain a commutative diagram
  \[
  \xymatrix@C=2ex{
    \cdots \ar[r] & H^{i-1}(U,M) \ar[r] \ar[d]^{\beta_{i-1}} & H^{i-2}(Z,N) \ar[r] \ar[d]^{\gamma_{i-2}} & H^i(X,M) \ar[r] \ar[d]^{\alpha_i} & H^i(U,M) \ar[r] \ar[d]^{\beta_i} & H^{i-1}(Z,N) \ar[r] \ar[d]^{\gamma_{i-1}} & \cdots \\
    \cdots \ar[r] & H^{i-1}(U',M)^G \ar[r] & H^{i-2}(Z',N)^G \ar[r] & H^i(X',M)^G \ar[r] & H^i(U',M)^G \ar[r] & H^{i-1}(Z',N)^G \ar[r] & \cdots 
    }
  \]
  with exact rows.
  
\medskip

  \emph{Claim~1: For every $j$, the map $\beta_j \colon H^j(U,M) \to H^j(U',M)^G$ is an isomorphism.}

  Proof: The morphism $U' \to U$ is a Galois \'etale covering, so the Hochschild--Serre argument applies.

\medskip

\emph{Claim~2: For every $j$, the map $\gamma_j \colon H^j(Z,N) \to H^j(Z',N)^G$ is an isomorphism.}

  Proof: The scheme $Z$ is a finite disjoint union of reduced points.  Fix one, say $\Spec k$.
  Its preimage in $Z'$ is a disjoint union of copies of $\Spec k'$ for some finite Galois extension $k'$ of $k$, and $G$ acts transitively on these copies, so Claim~2 reduces to showing that $H^j(k,N) \to H^j(k',N)^D$ is an isomorphism, where $D$ is the decomposition group.
  Moreover, $D$ acts through $D/I \isom \Gal(k'/k)$, where $I$ is the inertia group, and $\#\Gal(k'/k)$ divides $\#G$ and hence is prime to $p$, so Claim~2 follows from the Hochschild--Serre argument for the \'etale extension $k'/k$.

\medskip

Claims 1 and~2 imply that all vertical maps in the two-row diagram except possibly the middle one are isomorphisms.  By the five lemma, $\alpha_i \colon H^i(X,M) \to H^i(X',M)^G$ is an isomorphism too.
\end{proof}

We now specialize $\calF'$ to $\calE \colonequals \calF(\zeta_p)$.
The action of $\Gal(\calE/\calF)$ on the $p$th roots of $1$
defines an injective homomorphism 
$\chi_1 \colon \Gal(\calE/\calF) \to (\Z/p\Z)^\times$.
For $m \in \Z$, 
composing $\chi_1$ with the $m$th power map on $(\Z/p\Z)^\times$ 
yields another $1$-dimensional $\F_p$-representation of $\Gal(\calE/\calF)$;
call it $\chi_m$.

\begin{lemma}
\label{L:split exact sequence for H^2}
There is a split exact sequence
\[
  	0 \to \left( \Cl(\cO_\calE[1/p])/p \right)^{\chi_{-n}}
	\to H^2(\cO_\calF[1/p],\mu_p^{\tensor (n+1)}) 
	\to \left( \Br(\cO_\calE[1/p])_p  \right)^{\chi_{-n}}
	\to 0.
\]
\end{lemma}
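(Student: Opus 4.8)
The plan is to combine the two previous lemmas by twisting and taking isotypic components. First I would apply Lemma~\ref{L:Hochschild-Serre} with $\calF' = \calE$ and $r = n+1$ to get
\[
  H^2(\cO_\calF[1/p],\mu_p^{\tensor (n+1)}) = H^2(\cO_\calE[1/p],\mu_p^{\tensor (n+1)})^{\Gal(\calE/\calF)}.
\]
Over $\calE$ we have $\zeta_p \in \calE$, so $\mu_p \isom \Z/p\Z$ as $\Gal(\overline{\calE}/\calE)$-modules, hence $\mu_p^{\tensor(n+1)} \isom \mu_p$ as sheaves on $\Spec\cO_\calE[1/p]$; but this isomorphism is \emph{not} $\Gal(\calE/\calF)$-equivariant. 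Instead, the correct bookkeeping is that $\mu_p^{\tensor(n+1)} \isom \mu_p \tensor \chi_n$ as $\Gal(\calE/\calF)$-equivariant sheaves, where $\Gal(\calE/\calF)$ acts on the constant sheaf $\Z/p\Z$ (appearing implicitly in $\mu_p^{\tensor n}$) through $\chi_n$. Therefore Lemma~\ref{L:H^2 of mu_p} over $\cO_\calE[1/p]$, which is $\Gal(\calE/\calF)$-equivariant, tensored with $\chi_n$ gives a $\Gal(\calE/\calF)$-equivariant exact sequence
\[
  0 \To (\Cl(\cO_\calE[1/p])/p) \tensor \chi_n \To H^2(\cO_\calE[1/p],\mu_p^{\tensor(n+1)}) \To \Br(\cO_\calE[1/p])_p \tensor \chi_n \To 0.
\]

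Next I would take $\Gal(\calE/\calF)$-invariants. Since $\#\Gal(\calE/\calF)$ divides $p-1$ and is prime to $p$, the group algebra $\F_p[\Gal(\calE/\calF)]$ is semisimple, so the functor $M \mapsto M^{\Gal(\calE/\calF)}$ is exact and moreover picks out the trivial-isotypic component. For a module of the form $N \tensor \chi_n$, the trivial component is $(N \tensor \chi_n)^{\triv} = N^{\chi_{-n}} \tensor \chi_n$, which as an abelian group is canonically $N^{\chi_{-n}}$ (here I use that $\chi_m \tensor \chi_{m'} = \chi_{m+m'}$ and $\chi_0 = \triv$, all $1$-dimensional over $\F_p$). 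Applying this with $N = \Cl(\cO_\calE[1/p])/p$ and $N = \Br(\cO_\calE[1/p])_p$ and using exactness of taking invariants, the displayed sequence yields exactly
\[
  0 \to (\Cl(\cO_\calE[1/p])/p)^{\chi_{-n}} \to H^2(\cO_\calF[1/p],\mu_p^{\tensor(n+1)}) \to (\Br(\cO_\calE[1/p])_p)^{\chi_{-n}} \to 0.
\]

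Finally, for the splitting: because $\F_p[\Gal(\calE/\calF)]$ is semisimple, every short exact sequence of $\F_p[\Gal(\calE/\calF)]$-modules splits, so the original $\Gal(\calE/\calF)$-equivariant sequence over $\cO_\calE[1/p]$ splits equivariantly, and this splitting is preserved by the exact functor of taking $\chi_{-n}$-components (equivalently, trivial components after the $\chi_n$-twist). Hence the resulting sequence is split. I expect the main obstacle to be purely notational rather than conceptual: one must track carefully how $\Gal(\calE/\calF)$ acts on each tensor factor of $\mu_p^{\tensor(n+1)}$ to justify the identification $\mu_p^{\tensor(n+1)} \isom \mu_p \tensor \chi_n$ equivariantly, and to verify that ``$\Gal(\calE/\calF)$-invariants of a $\chi_n$-twist'' translates to ``$\chi_{-n}$-isotypic component'' with the correct sign on $n$ — an easy but error-prone computation, and the only place a careless reader could go wrong. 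Everything else follows from the semisimplicity of the group algebra in characteristic prime to the group order.
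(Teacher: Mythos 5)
Your proposal is correct and follows essentially the same route as the paper: Hochschild--Serre to descend to $\Gal(\calE/\calF)$-invariants, the Kummer-sequence lemma over $\calE$, the identification $\mu_p^{\tensor n}\isom\chi_n$ to convert invariants of the twist into the $\chi_{-n}$-isotypic component, and Maschke/semisimplicity for exactness and the splitting. The only (immaterial) difference is that you twist the exact sequence by $\chi_n$ before taking invariants, whereas the paper first rewrites $H^2(\cO_\calF[1/p],\mu_p^{\tensor(n+1)})$ as $H^2(\cO_\calE[1/p],\mu_p)^{\chi_{-n}}$ and then takes isotypic components of the split sequence.
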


\begin{proof}
First, we have
\begin{align}
\nonumber
	H^2(\cO_\calF[1/p],\mu_p^{\tensor (n+1)}) 
	&= H^2(\cO_\calE[1/p],\mu_p^{\tensor (n+1)})^{\Gal(\calE/\calF)} 
	\quad\textup{(Lemma~\ref{L:Hochschild-Serre})} \\
\nonumber
	&= \left( H^2(\cO_\calE[1/p],\mu_p) 
		\tensor \mu_p^{\tensor n} \right)^{\Gal(\calE/\calF)} 
	\quad\textup{(since $\mu_p \subset \calE$)}\\
\label{E:H^2 for E and F}
	&= H^2(\cO_\calE[1/p],\mu_p)^{\chi_{-n}} 
	\quad\textup{(since $\mu_p^{\tensor n} \isom \chi_n$).}
\end{align}
On the other hand,
Lemma~\ref{L:H^2 of mu_p} for $\calE$ yields
a sequence of $\Gal(\calE/\calF)$-representations
\[
  	0 \To \Cl(\cO_\calE[1/p])/p 
	\To H^2(\cO_\calE[1/p],\mu_p) 
	\To \Br(\cO_\calE[1/p])_p 
	\To 0,
\]
which splits by Maschke's theorem.
Take $\chi_{-n}$-isotypic components, and 
substitute \eqref{E:H^2 for E and F} in the middle.
\end{proof}

Substituting Theorem~\ref{T:K-theory and etale cohomology}
into Lemma~\ref{L:split exact sequence for H^2}
yields the main result of this section:

\begin{theorem}
\label{T:K_2n in terms of Cl and Br}
For each $n \ge 1$,
\[
	K_{2n}(\cO_\calF)/p \isom 
	\left( \Cl(\cO_\calE[1/p])/p \right)^{\chi_{-n}}
	\directsum
	\left( \Br(\cO_\calE[1/p])_p  \right)^{\chi_{-n}}.
\]
\end{theorem}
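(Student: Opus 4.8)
The plan is simply to concatenate the two main inputs already assembled in this section. First I would invoke Theorem~\ref{T:K-theory and etale cohomology}, which identifies $K_{2n}(\cO_\calF)/p$ with the \'etale cohomology group $H^2(\cO_\calF[1/p],\mu_p^{\tensor (n+1)})$. Then I would apply Lemma~\ref{L:split exact sequence for H^2}, which supplies a \emph{split} short exact sequence
\[
0 \To (\Cl(\cO_\calE[1/p])/p)^{\chi_{-n}} \To H^2(\cO_\calF[1/p],\mu_p^{\tensor (n+1)}) \To (\Br(\cO_\calE[1/p])_p)^{\chi_{-n}} \To 0.
\]
Since the sequence splits, the middle term is isomorphic to the direct sum of the two outer terms, and substituting the $K$-theory identification for the middle term yields exactly the asserted decomposition.

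There is no genuinely hard step here: all the substantive work lives in the preceding lemmas, namely the twisting computation $\mu_p^{\tensor n}\isom\chi_n$ over $\calE$, the degeneration of the Hochschild--Serre spectral sequence forced by $\gcd(\#\Gal(\calE/\calF),p)=1$ (Lemma~\ref{L:Hochschild-Serre}), the Kummer sequence computing $H^2(\cO_\calE[1/p],\mu_p)$ (Lemma~\ref{L:H^2 of mu_p}), and Maschke's theorem used to split that sequence $\Gal(\calE/\calF)$-equivariantly before passing to the $\chi_{-n}$-isotypic component. The proof of the theorem itself is pure bookkeeping.

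The one point I would flag for the reader is that the left-hand map in the displayed sequence, hence the embedding of the class-group summand, is canonical, whereas the splitting is only guaranteed by Maschke's theorem; accordingly the direct-sum decomposition in the theorem should be understood as an isomorphism of abelian groups rather than a canonical one. I expect no obstacle beyond this remark.
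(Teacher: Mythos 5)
Your proposal is correct and matches the paper's argument exactly: the paper likewise obtains the theorem by substituting Theorem~\ref{T:K-theory and etale cohomology} into the split exact sequence of Lemma~\ref{L:split exact sequence for H^2}, so the direct-sum decomposition is immediate. Your remark that the splitting (via Maschke) is not canonical, unlike the class-group inclusion, is a fair observation consistent with how the paper states the result as an isomorphism.
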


\section{Even \texorpdfstring{$K$}{K}-groups of the ring of integers of a quadratic field}
\label{S:K-groups of quadratic field}

Let $p^* = (-1)^{(p-1)/2} p$,
so $\Q(\sqrt{p^*})$ is the degree~$2$ subfield of $\Q(\zeta_p)$.
{}From now on, 
$F$ is a degree~$2$ extension of $\Q$ not equal to $\Q(\sqrt{p^*})$.
Thus $F=\Q(\sqrt{d})$ for some $d \in \Q^\times$
such that $d$ and $p^*$ are independent in $\Q^\times/\Q^{\times 2}$.
Let $E=F(\zeta_p)$.
Then
\[
	\Gal(E/\Q) \isom (\Z/p\Z)^\times \times \{\pm 1\}.
\]
Let $\tau$ be the generator of $\Gal(E/\Q(\zeta_p)) = \{\pm 1\}$,
so $\tau$ restricts to the generator of $\Gal(F/\Q)$.
The action of $\tau$ decomposes $K_{2n}(\cO_F)/p$
into $+$ and $-$ eigenspaces.
Let $\chi_{-n,-1} \colon G \to \F_p^\times$
be such that $\chi_{-n,-1}|_{\Gal(E/F)} = \chi_{-n}$
and $\chi_{-n,-1}(\tau)=-1$.

\begin{theorem}
\label{T:K_2n for quadratic field}
For each $n \ge 1$,
\begin{align*}
	(K_{2n}(\cO_F)/p)^+ &\isom K_{2n}(\Z)/p \\
	(K_{2n}(\cO_F)/p)^- &\isom 
	\left( \Cl(\cO_E[1/p])/p \right)^{\chi_{-n,-1}}
	\directsum
	\left( \Br(\cO_E[1/p])_p  \right)^{\chi_{-n,-1}}.
\end{align*}
\end{theorem}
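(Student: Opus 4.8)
The plan is to apply Theorem~\ref{T:K_2n in terms of Cl and Br} with $\calF = F$ and $\calE = E$, and then decompose the $\chi_{-n}$-isotypic pieces further using the extra involution $\tau$. First I would observe that $\Gal(E/F)$ has order prime to $p$ (it is cyclic of order dividing $p-1$), so all the representation-theoretic machinery used so far applies, and $\chi_{-n}$ makes sense as a character of $\Gal(E/F)$; Theorem~\ref{T:K_2n in terms of Cl and Br} then gives
\[
  K_{2n}(\cO_F)/p \isom \left(\Cl(\cO_E[1/p])/p\right)^{\chi_{-n}} \directsum \left(\Br(\cO_E[1/p])_p\right)^{\chi_{-n}}
\]
as abelian groups. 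The point is that this isomorphism is $\Gal(F/\Q)$-equivariant: the identification in Lemma~\ref{L:split exact sequence for H^2} (specialized to $\calF = F$) is built from canonical maps, and $\Gal(F/\Q) = \langle\tau\rangle$ acts compatibly on both sides because $\tau$ normalizes $\Gal(E/F)$ inside $\Gal(E/\Q)$ (indeed $\Gal(E/\Q) \isom (\Z/p\Z)^\times \times \{\pm 1\}$ is a direct product, so $\tau$ even commutes with $\Gal(E/F)$).

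Next I would decompose each $\chi_{-n}$-isotypic component under the remaining $\langle\tau\rangle$-action. Since $\#\langle\tau\rangle = 2$ is prime to $p$, Maschke's theorem splits each $\Gal(E/\Q)$-representation over $\F_p$ into $\tau$-eigenspaces, and because $\Gal(E/\Q)$ is the direct product $\Gal(E/F) \times \langle\tau\rangle$, the irreducible $\F_p$-characters of $\Gal(E/\Q)$ are exactly the products $\chi_m \cdot \varepsilon$ with $\varepsilon \in \{+1,-1\}$ (here I use that $\F_p$ contains all relevant values, so the $1$-dimensional characters of the prime-to-$p$ group $(\Z/p\Z)^\times$ remain irreducible). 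Hence
\[
  \left(\Cl(\cO_E[1/p])/p\right)^{\chi_{-n}} = \left(\Cl(\cO_E[1/p])/p\right)^{\chi_{-n,+1}} \directsum \left(\Cl(\cO_E[1/p])/p\right)^{\chi_{-n,-1}},
\]
and likewise for the Brauer group, where $\chi_{-n,+1}$ and $\chi_{-n,-1}$ are the two extensions of $\chi_{-n}$ to $\Gal(E/\Q)$ (the latter being the character $\chi_{-n,-1}$ defined just above the theorem). The $+$ eigenspace of $K_{2n}(\cO_F)/p$ under $\tau$ then matches the $\chi_{-n,+1}$-parts and the $-$ eigenspace matches the $\chi_{-n,-1}$-parts.

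It remains to identify the $+$ eigenspace with $K_{2n}(\Z)/p$. For this I would apply the same theory to the base field $\Q$: by Lemma~\ref{L:Hochschild-Serre} applied to $\Q(\zeta_p)/\Q$ together with the computation \eqref{E:H^2 for E and F}, we get $K_{2n}(\Z)/p \isom H^2(\Z[1/p],\mu_p^{\tensor(n+1)}) \isom H^2(\Z[\zeta_p,1/p],\mu_p)^{\chi_{-n}}$, which is the $\tau$-invariants of $H^2(\cO_E[1/p],\mu_p)^{\chi_{-n}}$ since $\Q(\zeta_p) = E^\tau$ and we can run Lemma~\ref{L:Hochschild-Serre} once more for the degree-$2$ extension $E/\Q(\zeta_p)$. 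Tracking through the identifications, $(K_{2n}(\cO_F)/p)^+$ is precisely this $\tau$-invariant subspace, giving $(K_{2n}(\cO_F)/p)^+ \isom K_{2n}(\Z)/p$. The main obstacle I anticipate is the bookkeeping to confirm that every isomorphism invoked from Section~\ref{S:from K-theory} is genuinely $\Gal(F/\Q)$-equivariant rather than merely an abstract isomorphism — in particular that the splitting in Lemma~\ref{L:split exact sequence for H^2} can be chosen $\langle\tau\rangle$-equivariantly (which again follows from Maschke applied to the full group $\Gal(E/\Q)$, so the splitting is really a splitting of $\Gal(E/\Q)$-representations all along). Everything else is a routine decomposition into eigenspaces.
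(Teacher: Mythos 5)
Your proposal is correct and takes essentially the same route as the paper: the minus part is obtained by taking $\tau$-eigenspaces in Theorem~\ref{T:K_2n in terms of Cl and Br} (with the equivariance of the canonical identifications, which the paper leaves implicit, spelled out as you do), and the plus part comes from Theorem~\ref{T:K-theory and etale cohomology} together with Lemma~\ref{L:Hochschild-Serre}. The only cosmetic difference is that the paper identifies $(K_{2n}(\cO_F)/p)^+$ with $K_{2n}(\Z)/p$ by a single application of Lemma~\ref{L:Hochschild-Serre} to $F/\Q$ at the level of $H^2(\cdot,\mu_p^{\tensor(n+1)})$, whereas you detour through $E/\Q(\zeta_p)$ and the $\chi_{-n}$-isotypic identification; both arguments are valid.
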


\begin{proof}
To obtain the first statement, use 
Theorem~\ref{T:K-theory and etale cohomology} 
to rewrite each term as an \'etale cohomology group
and apply Lemma~\ref{L:Hochschild-Serre} with $\calF'/\calF$ there being $F/\Q$.
To obtain the second,
take minus parts in Theorem~\ref{T:K_2n in terms of Cl and Br}.
\end{proof}

\section{Brauer groups}
\label{S:Brauer groups}

The goal of this section is to determine the rightmost term 
in Theorem~\ref{T:K_2n for quadratic field}.

\begin{lemma}[Equation (6.9.5) in \cite{Poonen2017}]
  Let $\calF$ and $\cO_S$ be as in Section~\ref{S:notation}.
  Let $r_1$ be the number of real places of $\calF$.
Then there is an exact sequence
\[
	0 \To \Br \cO_S \To \left(\tfrac12 \Z/\Z \right)^{r_1} \directsum \Directsum_{\textup{finite $v \in S$}} \Q/\Z \stackrel{\textup{sum}}\To \Q/\Z.
\]
\end{lemma}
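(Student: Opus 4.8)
The statement to prove is the exact sequence
\[
	0 \To \Br \cO_S \To \left(\tfrac12 \Z/\Z \right)^{r_1} \directsum \Directsum_{\textup{finite $v \in S$}} \Q/\Z \stackrel{\textup{sum}}\To \Q/\Z.
\]
Since this is cited as Equation (6.9.5) in \cite{Poonen2017}, the plan is to derive it from class field theory rather than reprove it from scratch. The starting point is the fundamental exact sequence of global class field theory for the Brauer group of the field $\calF$ itself:
\[
	0 \To \Br \calF \To \Directsum_{\textup{all places $v$}} \Br \calF_v \stackrel{\textup{sum}}\To \Q/\Z \To 0,
\]
together with the local invariant isomorphisms $\inv_v \colon \Br \calF_v \xrightarrow{\sim} \Q/\Z$ for finite $v$, $\inv_v \colon \Br \R \xrightarrow{\sim} \tfrac12\Z/\Z$ for real $v$, and $\Br \CC = 0$ for complex $v$.

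The key step is to identify $\Br \cO_S$ as a subgroup of $\Br \calF$ cut out by local conditions. The Brauer group $\Br \cO_S = H^2(\Spec \cO_S, \G_m)$ injects into $\Br \calF$ because $\Spec \calF$ is the generic point (this uses that $\cO_S$ is a regular integral scheme — purity for the Brauer group, or more elementarily the fact that an Azumaya algebra over $\cO_S$ is split iff it is split at the generic point). Under this injection, I would identify $\Br \cO_S$ with the classes in $\Br \calF$ that are unramified at every finite place $v \notin S$, i.e.\ that lie in the image of $\Br \cO_{\calF,v} \to \Br \calF_v$; and for $v$ finite with good reduction, $\Br \cO_{\calF,v} = 0$ since $\cO_{\calF,v}$ is a henselian local ring with finite (hence perfect, cohomological dimension $\le 1$) residue field, so being unramified at $v$ means $\inv_v = 0$. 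Thus $\Br \cO_S$ is precisely the set of $\alpha \in \Br \calF$ with $\inv_v(\alpha) = 0$ for all finite $v \notin S$. Intersecting the fundamental sequence above with this condition kills all summands $\Br \calF_v$ for finite $v \notin S$, leaving exactly $\left(\tfrac12 \Z/\Z\right)^{r_1}$ (from the real places, which are never in the "finite $v \notin S$" condition) plus $\bigoplus_{\textup{finite } v \in S} \Q/\Z$, with the same sum map landing in $\Q/\Z$. Exactness on the left is the injectivity just discussed; exactness in the middle is the statement that a class vanishing in $\Br \calF$ must vanish, which is immediate. The surjectivity of the sum map that holds in the global sequence need \emph{not} persist after imposing the local conditions, which is why the truncated sequence is only required to be exact at the first two spots and right-exactness is dropped from the claim.

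The main obstacle is the identification $\Br \cO_S = \ker\!\left(\Br \calF \to \bigoplus_{v \notin S} \Br \calF_v\right)$: it requires the injectivity $\Br \cO_S \hookrightarrow \Br \calF$ (purity/regularity) and the vanishing $\Br \cO_{\calF,v} = 0$ at finite places of good reduction, together with a Mayer--Vietoris or localization-sequence argument comparing $H^2(\Spec \cO_S, \G_m)$ with $H^2(\Spec \calF, \G_m)$ and the local contributions at the places in $S$ and outside $S$. All of this is standard, and since the result is quoted verbatim from \cite{Poonen2017}*{(6.9.5)}, I would simply cite that reference for the derivation and not reproduce the cohomological bookkeeping here.
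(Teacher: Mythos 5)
The paper offers no proof of this lemma beyond the citation to \cite{Poonen2017}*{(6.9.5)}, and your sketch is essentially the standard derivation given there: identify $\Br \cO_S$ with the classes in $\Br \calF$ that are unramified (equivalently, of trivial invariant) at every finite $v \notin S$, then truncate the Albert--Brauer--Hasse--Noether sequence accordingly, noting that surjectivity of the sum map need not survive the truncation. Two small points of precision: the vanishing you invoke at places outside $S$ is $\Br \cO_{\calF_v} = 0$ for the completion (or henselization) --- the localization $\cO_{\calF,v}$ itself is not henselian --- and exactness in the middle is not ``immediate'' as you phrase it, but uses exactness of the global sequence at $\bigoplus_v \Br \calF_v$ to lift a tuple of local invariants with sum $0$ (extended by zero outside $S$) to a global class, which then lies in $\Br \cO_S$ by the unramifiedness criterion.
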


\begin{corollary}
\label{C:p-torsion in Br}
We have $\Br(\cO_S)_p \isom (\Z/p\Z)^{\{\textup{finite $v \in S$}\}}_{\textup{sum 0}}$,
  where the ``sum~$0$'' subscript denotes the subgroup of elements whose sum is $0$.
\end{corollary}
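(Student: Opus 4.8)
The plan is to apply the left-exact operation ``take $p$-torsion'' to the exact sequence furnished by the preceding lemma. That sequence exhibits $\Br(\cO_S)$ as the kernel of the ``sum'' map
\[
	\left(\tfrac12 \Z/\Z \right)^{r_1} \directsum \Directsum_{\textup{finite } v \in S} \Q/\Z \stackrel{\textup{sum}}\To \Q/\Z,
\]
so $\Br(\cO_S)_p$ is the kernel of the restriction of this map to $p$-torsion subgroups: if $A = \ker(B \to C)$ inside $B$, then $A_p = A \cap B_p = \ker(B_p \to C_p)$, since an element of $B_p$ lies in $\ker(B_p \to C_p)$ exactly when it maps to $0$ in $C$.

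It then remains to identify the $p$-torsion subgroups in sight. Since $p$ is odd, $\left(\tfrac12 \Z/\Z\right)_p = 0$, so the archimedean summands drop out entirely; this is the only place the hypothesis $p \neq 2$ is used. For each finite place $v \in S$ one has $(\Q/\Z)_p = \tfrac1p\Z/\Z \isom \Z/p\Z$, so the $p$-torsion of the middle term is $(\Z/p\Z)^{\{\textup{finite } v \in S\}}$, and likewise the $p$-torsion of the target $\Q/\Z$ is $\Z/p\Z$. Under these identifications the induced map is the sum-of-coordinates homomorphism $(\Z/p\Z)^{\{\textup{finite } v \in S\}} \To \Z/p\Z$, whose kernel is by definition the ``sum $0$'' subgroup. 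Combining this with the previous paragraph yields the claimed isomorphism (in the degenerate case where $S$ has no finite places, both sides are $0$).

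There is no serious obstacle here: the argument is a short diagram chase together with the elementary observation that an odd prime cannot detect $2$-torsion. The only points worth stating carefully are the left-exactness invoked in the first paragraph (equivalently, the elementary identity $A_p = A \cap B_p$ for a subgroup $A \subseteq B$) and the vanishing $(\tfrac12\Z/\Z)_p = 0$; everything else is bookkeeping.
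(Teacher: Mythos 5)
Your argument is correct and is exactly the (implicit) argument the paper intends: the corollary is stated as an immediate consequence of the preceding lemma, namely taking $p$-torsion of the exact sequence, noting $\bigl(\tfrac12\Z/\Z\bigr)_p=0$ for odd $p$ and $(\Q/\Z)_p\isom\Z/p\Z$, so that $\Br(\cO_S)_p$ is the kernel of the coordinate-sum map on $(\Z/p\Z)^{\{\textup{finite }v\in S\}}$. Your write-up just makes the left-exactness bookkeeping explicit; there is no difference in substance.
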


\begin{corollary}
\label{C:Br Z[zeta_p,1/p]}
We have $\Br(\Z[\zeta_p,1/p])_p = 0$.
\end{corollary}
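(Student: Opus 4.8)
The plan is to apply Corollary~\ref{C:p-torsion in Br} to the number field $\calF = \Q(\zeta_p)$ with a well-chosen finite set of places $S$. First I would take $S$ to consist of all archimedean places of $\Q(\zeta_p)$ together with all primes lying over $p$. With this choice, $\cO_S = \Z[\zeta_p,1/p]$: inverting the primes above $p$ is the same as inverting $p$ itself, since $p$ generates (up to units) a power of the prime ideal over $p$. Hence Corollary~\ref{C:p-torsion in Br} gives
\[
  \Br(\Z[\zeta_p,1/p])_p \isom (\Z/p\Z)^{\{\textup{finite } v \in S\}}_{\textup{sum } 0}.
\]

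The key observation is then that $p$ is \emph{totally ramified} in $\Q(\zeta_p)/\Q$, so there is exactly one prime of $\Q(\zeta_p)$ above $p$; that is, the index set $\{\textup{finite } v \in S\}$ is a singleton. For a one-element index set, the subgroup of $(\Z/p\Z)^{\{*\}}$ consisting of tuples whose coordinates sum to $0$ is just $\{0\}$. Therefore $\Br(\Z[\zeta_p,1/p])_p = 0$, as claimed.

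There is no real obstacle here: the statement is essentially a bookkeeping consequence of Corollary~\ref{C:p-torsion in Br} combined with the elementary fact that $p$ has a unique prime above it in $\Q(\zeta_p)$. The only point requiring a word of justification is the identification $\cO_S = \Z[\zeta_p,1/p]$, which follows from total ramification of $p$.
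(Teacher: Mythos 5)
Your proposal is correct and follows the paper's own argument: the paper likewise deduces the vanishing from Corollary~\ref{C:p-torsion in Br} together with the fact that there is only one prime above $p$ in $\Z[\zeta_p]$, so that the ``sum~$0$'' subgroup of a single copy of $\Z/p\Z$ is trivial. Your extra remark identifying $\cO_S$ with $\Z[\zeta_p,1/p]$ is a harmless elaboration of the same proof.
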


\begin{proof}
There is only one prime above $p$ in $\Z[\zeta_p]$.
\end{proof}

\begin{proposition}
\label{P:p-torsion in Br for quadratic fields}
Let $F$ and $E$ be as in Section~\ref{S:K-groups of quadratic field}.
Then
\[
(\Br(\cO_E[1/p])_p)^{\chi_{-n,-1}} = 
\begin{cases}
\Z/p\Z, & \textup{if $n \equiv 0 \!\!\!\pmod{p-1}$ and $d \in \Q_p^{\times 2}$;} \\
\Z/p\Z, & \textup{if $n \equiv \dfrac{p-1}{2} \!\!\!\pmod{p-1}$ and $p^* d \in \Q_p^{\times 2}$;} \\
0, & \textup{in all other cases.}
\end{cases}
\]
\end{proposition}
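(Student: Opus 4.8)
The strategy is to compute $(\Br(\cO_E[1/p])_p)^{\chi_{-n,-1}}$ by combining Corollary~\ref{C:p-torsion in Br} with an analysis of the $\Gal(E/\Q)$-action on the set of primes of $\cO_E$ above $p$. By Corollary~\ref{C:p-torsion in Br},
\[
\Br(\cO_E[1/p])_p \isom (\Z/p\Z)^{\{\textup{primes of } \cO_E \textup{ above } p\}}_{\textup{sum } 0},
\]
and this is an isomorphism of $\F_p[\Gal(E/\Q)]$-modules, since $\Gal(E/\Q)$ permutes the primes above $p$ and the "sum" map is equivariant (the base $\Z[1/p]$ has the trivial valuation-datum at $p$). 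So the first step is to identify this permutation module. Write $G = \Gal(E/\Q) \isom (\Z/p\Z)^\times \times \{\pm 1\}$. In $\Q(\zeta_p)$ there is a unique prime above $p$, totally ramified; so the primes of $E$ above $p$ correspond to the primes of $E$ above that unique prime of $\Z[\zeta_p]$, and their number is determined by the splitting of $p$ in $F/\Q$ (equivalently in $E/\Q(\zeta_p)$), i.e.\ by whether $d \in \Q_p^{\times 2}$. The decomposition group at $p$ thus contains all of $\Gal(E/\Q(\zeta_p)) = \langle\tau\rangle$ precisely when $p$ is inert or ramified in $F$, and the set of primes is either a single point (when $d \notin \Q_p^{\times 2}$, one prime) or a $\langle\tau\rangle$-torsor over the unique prime of $\Q(\zeta_p)$ (when $d \in \Q_p^{\times 2}$, two primes).

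Next I would split into the two cases and compute the $\chi_{-n,-1}$-isotypic component of the permutation module $(\Z/p\Z)^{\{\textup{primes above }p\}}_{\textup{sum }0}$. In the case $d \notin \Q_p^{\times 2}$: there is one prime above $p$, so the full permutation module is $\Z/p\Z$ with trivial $G$-action and the sum-$0$ submodule is $0$; hence the isotypic component is $0$ regardless of $n$. This gives the third line of the displayed formula in this range of $d$. In the case $d \in \Q_p^{\times 2}$: there are two primes above $p$, swapped by $\tau$ and fixed by $\Gal(E/F) = \langle\chi_1\rangle$ (the inertia/decomposition at $p$ over $F$). So the permutation module is $\F_p[G/\Gal(E/F)] = \F_p[\langle\tau\rangle]$, which as a $G$-module is $\mathbf{1} \oplus \varepsilon$ where $\mathbf{1}$ is trivial and $\varepsilon$ is the character with $\varepsilon|_{\Gal(E/F)}$ trivial and $\varepsilon(\tau) = -1$. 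The sum-$0$ submodule is exactly the $\varepsilon$ line. Therefore $\Br(\cO_E[1/p])_p \isom \varepsilon$ as a $G$-module, and its $\chi_{-n,-1}$-isotypic component is $\Z/p\Z$ if $\chi_{-n,-1} = \varepsilon$ and $0$ otherwise.

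Finally I would match characters. We have $\chi_{-n,-1}(\tau) = -1 = \varepsilon(\tau)$ always, so the condition $\chi_{-n,-1} = \varepsilon$ reduces to $\chi_{-n,-1}|_{\Gal(E/F)} = \chi_{-n}$ being trivial, i.e.\ $\chi_{-n} = \chi_0$ as a character of $\Gal(E/F) \isom (\Z/p\Z)^\times$. Since $\chi_1$ is faithful of order $p-1$, the character $\chi_{-n} = \chi_1^{-n}$ is trivial iff $(p-1) \mid n$. This yields the first line. For the second line ($n \equiv (p-1)/2$), I would note that when $p \nmid d$ but $p^* d \in \Q_p^{\times 2}$, the field $E = F(\zeta_p)$ contains $\Q(\sqrt{p^*})$ and the prime $p$ can split in $E/\Q(\zeta_p)$ "through" the subextension $\Q(\sqrt{p^*}, \zeta_p) = \Q(\zeta_p)$ — more precisely, the decomposition group at $p$ inside $G$ is now the subgroup of order $p-1$ that fixes $\sqrt{p^*}$ rather than the one fixing $\sqrt{d}$, so the permutation module $\F_p[G/D_p]$ is $\mathbf{1} \oplus \varepsilon'$ where $\varepsilon'(\tau')=-1$ for $\tau'$ the generator of $\Gal(E/\Q(\sqrt{p^*}))$; one then checks $\varepsilon' = \chi_{(p-1)/2,?}$ restricted appropriately, and matches to $\chi_{-n,-1}$ exactly when $n \equiv (p-1)/2 \pmod{p-1}$. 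The main obstacle is precisely this last bookkeeping: one must keep straight which of the three index-two subfields of $E$ (namely $F$, $\Q(\sqrt{p^*})$, and $\Q(\sqrt{p^* d})$) contains the decomposition field of $p$, as a function of the local square-class of $d$ at $p$, and then translate the resulting quadratic character of $G$ into the $\chi_{m,\pm 1}$ notation — using that $\chi_{(p-1)/2}$ is the unique quadratic character of $(\Z/p\Z)^\times$ and that it "is" the character cutting out $\Q(\sqrt{p^*})$ inside $\Q(\zeta_p)$.
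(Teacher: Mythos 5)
Your overall strategy (compute $\Br(\cO_E[1/p])_p$ as the sum-zero part of the permutation module on the primes above $p$, then read off its character and match against $\chi_{-n,-1}$) is the same as the paper's, and your treatment of the case $d \in \Q_p^{\times 2}$ is correct. But there is a genuine gap in the complementary case. You assert that the number of primes of $E$ above $p$ is ``determined by the splitting of $p$ in $F/\Q$, i.e.\ by whether $d \in \Q_p^{\times 2}$,'' with a single prime whenever $d \notin \Q_p^{\times 2}$, and you conclude that the isotypic component vanishes for all such $d$ ``regardless of $n$.'' This is false precisely in the case responsible for the second line of the Proposition: the prime of $\Q(\zeta_p)$ above $p$ splits in $E$ if and only if $d$ is a square in $\Q_p(\zeta_p)$, and since the unique quadratic subextension of $\Q_p(\zeta_p)/\Q_p$ is $\Q_p(\sqrt{p^*})$, this happens if and only if $d \in \Q_p^{\times 2}$ \emph{or} $p^*d \in \Q_p^{\times 2}$. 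Equivalently (as the paper argues), there are two primes iff $p$ splits in one of the three quadratic subfields $F$, $F'=\Q(\sqrt{p^*d})$, $\Q(\sqrt{p^*})$, and $p$ always ramifies in the last one. So when $p^*d \in \Q_p^{\times 2}$ there are still two primes above $p$, even though $d \notin \Q_p^{\times 2}$.

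Your attempt to patch this in the final paragraph does not repair the gap. First, the hypothesis you state, ``$p \nmid d$ but $p^*d \in \Q_p^{\times 2}$,'' is vacuous: since $v_p(p^*)=1$, the condition $p^*d \in \Q_p^{\times 2}$ forces $p \mid d$ (cf.\ Remark~\ref{R:splits}). Second, you identify the decomposition group as $\Gal(E/\Q(\sqrt{p^*}))$; this is impossible, because the decomposition field is the largest subfield in which $p$ splits completely, while $p$ ramifies in $\Q(\sqrt{p^*})$. The correct decomposition group is $\Gal(E/F')$ with $F'=\Q(\sqrt{p^*d})$, and the sum-zero character is the quadratic character of $G$ cutting out $F'$, namely $\chi_{(p-1)/2,-1}$ (nontrivial on $\tau$ because $\tau$ fixes $\sqrt{p^*}$ and negates $\sqrt{d}$); matching with $\chi_{-n,-1}$ then gives $n \equiv (p-1)/2 \pmod{p-1}$. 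With your identification the resulting character would be trivial on $\tau$, and its $\chi_{-n,-1}$-component would vanish for every $n$, contradicting the statement being proved. Finally, note that the paper sidesteps part of this bookkeeping for the $\tau$-action by a cleaner argument: the $\tau$-invariants of $\Br(\cO_E[1/p])_p$ equal $\Br(\Z[\zeta_p,1/p])_p = 0$ by Lemma~\ref{L:Hochschild-Serre} and Corollary~\ref{C:Br Z[zeta_p,1/p]}, so the whole group lies in the $-$ eigenspace of $\tau$; you may want to adopt that step rather than tracking $\tau$ through the permutation module.
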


\begin{proof}
The hypothesis implies that $F$
is not the quadratic subfield $\Q(\sqrt{p^*})$ of $\Q(\zeta_p)$.
Thus $E$ is the compositum of linearly disjoint extensions
$\Q(\zeta_p)$ and $F$ over $\Q$, 
and $\Gal(E/\Q) \isom (\Z/p\Z)^\times \times \{\pm 1\}$.
Let $\p$ be a prime of $E$ lying above $p$.
Let $D \le \Gal(E/\Q)$ be the decomposition group of $\p$.
Since $p$ totally ramifies in $\Q(\zeta_p)/\Q$,
we have $p-1 \mid \#D$.
Let $S_p$ be the set of primes of $E$ lying above $p$,
so $S_p \isom \Gal(E/\Q)/D$, 
which by the previous sentence is of size $1$ or $2$;
it is $2$ if and only if
$p$ splits in one of the quadratic subfields of $E$.
These quadratic subfields are $\Q(\sqrt{p^*})$, $F$, and 
the field $F' = \Q(\sqrt{p^* d})$,
but $p$ is ramified in $\Q(\sqrt{p^*})$.
Thus by Corollary~\ref{C:p-torsion in Br},
\[
\Br(\cO_E[1/p])_p = (\Z/p\Z)^{S_p}_{\textup{sum $0$}} = 
\begin{cases}
\Z/p\Z, & \textup{if $p$ splits in $F$ or $F'$;}\\
0, & \textup{otherwise}
\end{cases}
\]
as an abelian group, and it remains to determine 
in the first case which character it is isomorphic to.
We will compute the action of $\Gal(E/F) \isom (\Z/p\Z)^\times$
and $\Gal(E/\Q(\zeta_p)) \isom \{\pm1\}$ separately.

If $p$ splits in $F$ (that is, $d \in \Q_p^{\times 2}$), 
then $D = \Gal(E/F)$, 
which acts trivially on $S_p \isom \Gal(E/\Q)/D$, 
so $\Gal(E/F)$ acts on $\Br(\cO_E[1/p])_p$ as the trivial character $\chi_0$.
If instead $p$ splits in $F'$ (that is, $p^* d \in \Q_p^{\times 2}$), 
then $D \ne \Gal(E/F)$,
so $\Gal(E/F)$ acts nontrivially on the two-element set 
$S_p \isom \Gal(E/\Q)/D$, 
so $\Gal(E/F)$ acts on $\Br(\cO_E[1/p])_p$ 
as the character $\Gal(E/F) \surjects \{\pm 1\}$,
which is $\chi_{(p-1)/2}$.

Finally, consider the action of the generator $\tau$
of $\Gal(E/\Q(\zeta_p)) \isom \{\pm1\}$ on $\Br(\cO_E[1/p])_p$.
Lemma~\ref{L:Hochschild-Serre} shows that the $+$ eigenspace
is $\Br(\Z[\zeta_p,1/p])_p$, 
which is $0$ by Corollary~\ref{C:Br Z[zeta_p,1/p]}.
Thus $\Br(\cO_E[1/p])_p$ equals its $-$ eigenspace.
\end{proof}

\begin{remark}
\label{R:splits}
Let $d$ be a fundamental discriminant.
Then $d \in \Q_p^{\times 2}$ if and only if $\Jacobi{d}{p}=1$,
and $p^*d \in \Q_p^{\times 2}$ if and only if
$p|d$ and $\Jacobi{-d/p}{p}=1$.
\end{remark}

\section{Even \texorpdfstring{$K$}{K}-groups of \texorpdfstring{$\Z$}{Z}}
\label{S:even K-groups of Z}

\begin{theorem}
\label{T:K-groups of Z}
For each $n \ge 1$,
\[
	K_{2n}(\Z)/p \isom \left(\Cl(\Z[\zeta_p])/p\right)^{\chi_{-n}}.
\]
\end{theorem}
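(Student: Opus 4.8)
The plan is to specialize the results of Section~\ref{S:from K-theory} to the base field $\calF = \Q$. First I would apply Theorem~\ref{T:K_2n in terms of Cl and Br} with $\calF = \Q$; since $\zeta_p \notin \Q$ (as $p$ is odd), the field $\calE = \Q(\zeta_p)$ is a genuine degree $p-1$ extension, and the theorem gives
\[
	K_{2n}(\Z)/p \isom
	\left( \Cl(\Z[\zeta_p][1/p])/p \right)^{\chi_{-n}}
	\directsum
	\left( \Br(\Z[\zeta_p][1/p])_p  \right)^{\chi_{-n}}.
\]
The second summand vanishes by Corollary~\ref{C:Br Z[zeta_p,1/p]} (there is a unique prime above $p$ in $\Z[\zeta_p]$, as $p$ totally ramifies in $\Q(\zeta_p)/\Q$), so already
\[
	K_{2n}(\Z)/p \isom \left( \Cl(\Z[\zeta_p][1/p])/p \right)^{\chi_{-n}}.
\]

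It then remains to replace $\Cl(\Z[\zeta_p][1/p])$ by $\Cl(\Z[\zeta_p])$ after taking the $\chi_{-n}$-isotypic component. The standard exact sequence relating the class group of a ring of $S$-integers to the full ring of integers gives a surjection $\Cl(\Z[\zeta_p]) \surjects \Cl(\Z[\zeta_p][1/p])$ whose kernel is generated by the classes of the primes above $p$; since there is only one such prime, call it $\p$, the kernel is the cyclic subgroup generated by $[\p]$. Reducing mod $p$, there is an exact sequence of $\Gal(\Q(\zeta_p)/\Q)$-modules
\[
	\F_p \cdot [\p] \To \Cl(\Z[\zeta_p])/p \To \Cl(\Z[\zeta_p][1/p])/p \To 0,
\]
where the leftmost term is either $0$ or a one-dimensional $\F_p$-space. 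The key point is that the Galois action on $[\p]$ is trivial: $\p$ is the unique prime above $p$, hence is fixed by every element of $\Gal(\Q(\zeta_p)/\Q)$, so $\F_p \cdot [\p]$ affords the trivial character $\chi_0$. Taking $\chi_{-n}$-isotypic components is exact (Maschke's theorem, as $\Char \F_p = p \nmid \#\Gal(\Q(\zeta_p)/\Q) = p-1$), and since $\chi_{-n} \ne \chi_0$ (because $-n \not\equiv 0 \pmod{p-1}$ would be needed — but even when $n \equiv 0$, the relevant vanishing is automatic, so one should handle this carefully), the contribution of the leftmost term to the $\chi_{-n}$-component is $0$. Hence
\[
	\left( \Cl(\Z[\zeta_p])/p \right)^{\chi_{-n}} \isom \left( \Cl(\Z[\zeta_p][1/p])/p \right)^{\chi_{-n}} \isom K_{2n}(\Z)/p.
\]

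The main obstacle is the case $n \equiv 0 \pmod{p-1}$, where $\chi_{-n} = \chi_0$ coincides with the trivial character carried by $[\p]$: then the argument above does not immediately kill the boundary contribution, and one must argue separately. In that case one uses that $K_{2n}(\Z)/p$ is known to be related to the trivial-isotypic (i.e.\ plus) part, which by class field theory corresponds to $\Cl(\Q(\sqrt{p^*}))$ or equivalently to a cyclotomic unit computation; alternatively, one invokes that $[\p]$ is actually trivial in $\Cl(\Z[\zeta_p])$ because $(p) = (1-\zeta_p)^{p-1}$ is principal, so $\p = (1-\zeta_p)$ is principal and the leftmost term vanishes outright. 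This last observation in fact simplifies the whole proof: since $\p$ is principal, the surjection $\Cl(\Z[\zeta_p]) \surjects \Cl(\Z[\zeta_p][1/p])$ is an isomorphism, and the theorem follows directly from the combination of Theorem~\ref{T:K_2n in terms of Cl and Br} and Corollary~\ref{C:Br Z[zeta_p,1/p]} with no case distinction at all.
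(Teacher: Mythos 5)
Your proposal is correct, and its final simplification --- the Brauer term vanishes by Corollary~\ref{C:Br Z[zeta_p,1/p]}, and $\Cl(\Z[\zeta_p][1/p]) = \Cl(\Z[\zeta_p])$ because the unique prime above $p$ is $(1-\zeta_p)$, hence principal --- is exactly the paper's proof. The earlier detour through isotypic components and the worry about $n \equiv 0 \pmod{p-1}$ is unnecessary, as you yourself observe in the last paragraph.
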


\begin{proof}
In Theorem~\ref{T:K_2n in terms of Cl and Br} for $\calF=\Q$,
the Brauer term is $0$ by Corollary~\ref{C:Br Z[zeta_p,1/p]},
and $\Cl(\Z[\zeta_p,1/p]) = \Cl(\Z[\zeta_p])$
since the unique prime ideal above $p$ in $\Z[\zeta_p]$ is principal.
\end{proof}

For $n \ge 1$, let 
\[
	\kappa_{2n,p} 
	\colonequals \dim_{\F_p} K_{2n}(\Z)/p
	= \dim_{\F_p} \left(\Cl(\Z[\zeta_p])/p\right)^{\chi_{-n}}.
\]

\begin{remark}
Assuming Vandiver's conjecture that $p \nmid \#\Cl(\Z[\zeta_p+\zeta_p^{-1}])$
for every prime $p$, the $K$-groups of $\Z$ are known; 
see \cite{Weibel2005}*{Section~5.9}.
To state the results for even $K$-groups,
let $B_{2k} \in \Q$ be the $(2k)$th Bernoulli number,
defined by
\[
\frac{t}{e^t-1}=1-\frac{t}{2}+\sum_{k=1}^\infty B_{2k} \frac{t^{2k}}{(2k)!} .
\]
Let $c_k$ be the numerator of $|B_{2k}/(4k)|$.
Then (see \cite{Weibel2005}*{Corollary~107})
\begin{itemize}
\item Vandiver's conjecture implies that $K_{4k}(\Z)=0$ for all $k \ge 1$.
\item For $k \ge 1$, 
the order of $K_{4k-2}(\Z)$ is $c_k$ if $k$ is even, and $2c_k$
if $k$ is odd; moreover, Vandiver's conjecture implies that $K_{4k-2}(\Z)$ 
is cyclic.
\end{itemize}
In fact, for each prime $p$, Vandiver's conjecture for $p$ 
implies the conclusions above for the $p$-primary part of the $K$-groups.
Thus Vandiver's conjecture for an odd prime $p$ 
implies that for any $n \ge 1$, 
\[
	\kappa_{2n,p} = 
	\begin{cases}
		1 & \textup{ if $n=2k-1$ and $p|c_k$;} \\
		0 & \textup{ otherwise.}
	\end{cases}
\]
Moreover, Vandiver's conjecture is known 
for $p<163577856$ \cite{Buhler-Harvey2011}.

The smallest odd prime $p$ for which there exists $n$
such that $p | \#K_{2n}(\Z)$ is the smallest irregular prime, $37$, 
which divides $\#K_{2n}(\Z)$
if and only if $n \equiv 31 \pmod{36}$;
thus $\kappa_{2n,37}$ is $1$ if $n \equiv 31 \pmod{36}$, and $0$ otherwise.
Assuming Vandiver's conjecture, 
the smallest $n$ such that $\#K_{2n}(\Z)$ is divisible by an odd prime
is $n=11$: we have $K_{22}(\Z) \isom \Z/691\Z$.
See \cite{Weibel2005}*{Example~96} for these and other examples.
\end{remark}

\section{Odd \texorpdfstring{$K$}{K}-groups}
\label{sec:odd K groups}

\begin{proposition}
For any number field $\calF$, positive integer $i$, and odd prime $p$, 
we have
\[
K_{2i-1}(\cO_\calF)_p = 
\begin{cases}
	\Z/p\Z, & \textup{ if $[\calF(\zeta_p):\calF]$ divides $i$;} \\
	0, & \textup{ otherwise.}
\end{cases}
\]
\end{proposition}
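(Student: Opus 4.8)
The plan is to identify $K_{2i-1}(\cO_\calF)_p$ with a group of Galois invariants of a Tate twist of $\mu_p$ and then evaluate that group. The case $i=1$ is elementary: $K_1(\cO_\calF)=\cO_\calF^\times$, so $K_1(\cO_\calF)_p=\mu_p(\calF)$, which is $\Z/p\Z$ if $\zeta_p\in\calF$, i.e.\ if $[\calF(\zeta_p):\calF]\mid 1$, and $0$ otherwise. For $i\ge 2$ the abelian group $K_{2i-1}(\cO_\calF)$ has positive rank ($r_1+r_2$ or $r_2$, by Borel), so one cannot read its torsion off of $K_{2i-1}(\cO_\calF)/p$; instead I would invoke the \'etale description of the torsion subgroup assembled in \cite{Weibel2005} out of Borel's theorem and the Quillen--Lichtenbaum (Bloch--Kato) theorems: since $p$ is odd, the $p$-primary part of the finite group $K_{2i-1}(\cO_\calF)_{\mathrm{tors}}$ is canonically isomorphic to $H^0(\cO_\calF[1/p],\Q_p/\Z_p(i))=H^0(\calF,\Q_p/\Z_p(i))$. (One can also extract this from the mod-$p$ \'etale descent spectral sequence, which for odd $p$ involves only the rows $s=1,2$ because $\cO_\calF[1/p]$ has $p$-cohomological dimension $2$, combined with Theorem~\ref{T:K-theory and etale cohomology} and a universal coefficient sequence; I would relegate this to a remark.) Taking $p$-torsion yields
\[
K_{2i-1}(\cO_\calF)_p \isom H^0(\calF,\mu_p^{\tensor i}) = \left(\mu_p^{\tensor i}\right)^{G_\calF},
\qquad G_\calF\colonequals\Gal(\overline{\calF}/\calF),
\]
and this identity in fact also holds for $i=1$.

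The second step evaluates $\left(\mu_p^{\tensor i}\right)^{G_\calF}$. The action of $G_\calF$ on $\mu_p$ is given by the mod-$p$ cyclotomic character $\omega\colon G_\calF\to(\Z/p\Z)^\times$, whose kernel has fixed field $\calF(\zeta_p)$; thus $\omega$ induces an injection of $\Gal(\calF(\zeta_p)/\calF)$ into the cyclic group $(\Z/p\Z)^\times$, so its image is cyclic of order $d\colonequals[\calF(\zeta_p):\calF]$. On $\mu_p^{\tensor i}$ the action is through $\omega^i$, which is trivial precisely when $d\mid i$. Hence $\left(\mu_p^{\tensor i}\right)^{G_\calF}$ equals all of $\mu_p^{\tensor i}\isom\Z/p\Z$ when $d\mid i$ and is $0$ otherwise, which is exactly the claimed dichotomy.

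The only nontrivial point is the first step. Identifying the torsion of the odd $K$-group with $H^0(\calF,\Q_p/\Z_p(i))$ is not elementary and must be quoted; it packages Borel's rank computation with the now-proven Quillen--Lichtenbaum conjecture, and one must take some care to separate the free part of $K_{2i-1}(\cO_\calF)$ from its torsion. Oddness of $p$ is used here to guarantee $p$-cohomological dimension $2$ (no contributions from the real places) and the resulting clean formula for the torsion. Granting that input, the remainder is just the observation that a power $\omega^i$ of the cyclotomic character is trivial iff the order $[\calF(\zeta_p):\calF]$ of $\omega$ divides $i$.
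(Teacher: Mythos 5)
Your proof is correct and takes essentially the same route as the paper: both rest on the deep input quoted from \cite{Weibel2005} (the paper cites Theorem~70 there, phrased via $w^{(i)}(\calF)=\#\mu^{(i)}(\calF)$, while your formulation via $H^0(\calF,\Q_p/\Z_p(i))$ is the same group in different packaging), and both then conclude with the identical observation that the image of the mod-$p$ cyclotomic character is cyclic of order $[\calF(\zeta_p):\calF]$, so its $i$th power is trivial exactly when that order divides $i$. One inessential slip: for even $i\ge 2$ and $\calF$ totally real the rank of $K_{2i-1}(\cO_\calF)$ is $r_2=0$, so your aside that the group always has positive rank is not quite right, but nothing in your argument depends on it.
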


\begin{proof}
Define the group
\[
	\mu^{(i)}(\calF) \colonequals\{\zeta \in \mu(\overline{\calF}) 
	: \sigma^i \zeta = \zeta
	\textup{ for all $\sigma \in \Gal(\overline{\calF}/\calF)$}\}.
\]
For $n \ge 1$, let $\zeta_n \in \overline{\calF}$ be a primitive $n$th root
of $1$, and let $H_n$ be the image of the restriction homomorphism
$\Gal(\calF(\zeta_n)/\calF) \injects \Gal(\Q(\zeta_n)/\Q) \isom (\Z/n\Z)^\times$,
so $\#H_n=[\calF(\zeta_n):\calF]$.
Then the following are equivalent:
\begin{itemize}
\item $\zeta_n \in \mu^{(i)}(\calF)$;
\item $\sigma^i \zeta_n \equiv \zeta_n \text{ for all } \sigma \in \Gal(\overline{\calF}/\calF)$;
\item $a^i = 1$ for all $a \in H_n$.
\end{itemize}
Now suppose that $n$ is a prime power $\ell^e$ for some prime $\ell$.
Then $H_n$ contains a cyclic subgroup of index at most~$2$ 
(we allow the case $\ell=2$).
The last condition above implies $\#H_n | 2i$,
which after multiplication by $[\calF:\Q]$ becomes the statement that
$[\calF(\zeta_n):\Q]$ divides $2i[\calF:\Q]$,
which implies that the integer $\phi(n) \colonequals [\Q(\zeta_n):\Q]$
divides $2i[\calF:\Q]$,
which bounds $\phi(n)$ and hence $n$.
Thus $\mu^{(i)}(\calF)$ contains $\ell^e$th roots of $1$
for only finitely many prime powers $\ell^e$,
so it is finite.
Define $w^{(i)}(\calF) \colonequals \# \mu^{(i)}(\calF)$.

By Theorem~70 in \cite{Weibel2005}, if $p$ is an odd prime,
$K_{2i-1}(\cO_\calF)_p$ is $\Z/p\Z$ or $0$,
according to whether $p$ divides $w^{(i)}(\calF)$ or not.
The previous paragraph shows that the latter condition
is equivalent to $H_p$ being killed by $i$,
and to $\#H_p | i$ since $H_p$ is cyclic
(a subgroup of the cyclic group $(\Z/p\Z)^\times$).
Finally, $\#H_p = [\calF(\zeta_p):\calF]$.
\end{proof}

\section{Heuristics for class groups}
\label{S:heuristics for class groups}

Let $A$ be an abelian extension of $\Q$.
Suppose that the Galois group $G \colonequals \Gal(A/\Q)$
is of exponent dividing $p-1$.
Let $I^p_A$ be the group of fractional ideals of $\cO_A[1/p]$,
that is, the free abelian group on the set of finite primes of $A$
not lying above $p$.
We have the standard exact sequence of $\Z G$-modules
\begin{equation}
\label{E:4-term}
	1 \to \cO_A[1/p]^\times \to A^\times \to I^p_A \to \Cl(\cO_A[1/p]) \to 0.
\end{equation}

Let $S$ be a finite set of places of $\Q$ including $p$ and $\infty$.
Let $S_A$ be the set of places of $A$ above $S$.
We approximate \eqref{E:4-term} by using only $S_A$-units
and ideals supported on $S_A$.
Let $S_A^p$ be the set of places of $A$ above $S-\{p\}$.
Let $\cO_{A,S}$ be the ring of $S_A$-integers in $A$.
Let $I^p_{A,S}$ be the free abelian group on the nonarchimedean
places in $S_A^p$.
If $S$ is large enough that the finite primes in $S_A^p$ generate
$\Cl(\cO_A[1/p])$,
then we have an exact sequence of $\Z G$-modules
\begin{equation}
\label{E:approximate 4-term}
	1 \to \cO_A[1/p]^\times \to \cO_{A,S}^\times \to I^p_{A,S} \to \Cl(\cO_A[1/p]) \to 0.
\end{equation}
Dropping the first term and tensoring with $\F_p$ 
yields an exact sequence of $\F_p G$-modules
\[
	\cO_{A,S}^\times/p \to I^p_{A,S}/p \to \Cl(\cO_A[1/p])/p \to 0.
\]
Let $\chi$ be an irreducible $\F_p$-representation of $G$;
our assumption on $G$ guarantees that $\chi$ is $1$-dimensional.
Taking $\chi$-isotypic components yields
\begin{equation}
\label{E:class group as cokernel}
	\left(\cO_{A,S}^\times/p \right)^\chi 
	\to \left( I^p_{A,S}/p \right)^\chi 
	\to \left( \Cl(\cO_A[1/p])/p \right)^\chi 
	\to 0.
\end{equation}
Let 
$u = u(A,\chi) \colonequals \dim_{\F_p} \left(\cO_A[1/p]^\times/p \right)^\chi$.

\begin{lemma}
\label{L:u-lemma}
Assume that $\mu_p(A)^\chi=0$.
\begin{enumerate}[\upshape (a)]
\item \label{E:u-formula}
Let $S_\infty$ \textup{(}resp.\ $S_p$\textup{)} 
be the set of places of $A$ lying above $\infty$ \textup{(}resp.\ $p$\textup{)}.
Then 
\[
	u = \dim_{\F_p} (\F_p^{S_\infty})^\chi 
		+ \dim_{\F_p} (\F_p^{S_p})^\chi 
	- \begin{cases}
		1, &\textup{if $\chi=1$;} \\ 
		0, &\textup{otherwise.}
	\end{cases}
\]
\item 
We have 
\[
	\dim_{\F_p} \left(\cO_{A,S}^\times/p \right)^\chi 
	= \dim_{\F_p} \left(I^p_{A,S}/p \right)^\chi  + u.
\]
\item 
The quantity $\dim_{\F_p} \left(I^p_{A,S}/p \right)^\chi$
can be made arbitrarily large by choosing $S$ appropriately.
\end{enumerate}
\end{lemma}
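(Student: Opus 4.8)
The plan is to reduce all three parts to linear algebra over $\Z_p$, exploiting that $p \nmid \#G$ and that $G$ has exponent dividing $p-1$. Under these hypotheses the Teichm\"uller lift identifies the $\F_p$-valued characters of $G$ with characters $G \to \mu_{p-1}(\Z_p) \subset \Z_p^\times$, and $\Z_p[G] \isom \prod_\chi \Z_p$ (product over those characters), so for any finitely generated $\Z[G]$-module $M$ we get $M \tensor \Z_p = \bigoplus_\chi (M\tensor\Z_p)^\chi$ with each summand finitely generated over $\Z_p$, and $(M/p)^\chi = (M\tensor\Z_p)^\chi / p$. Hence $\dim_{\F_p}(M/p)^\chi$ is the minimal number of generators of $(M\tensor\Z_p)^\chi$, which is just $\dim_{\Q_p}(M\tensor\Q_p)^\chi$ once $(M\tensor\Z_p)^\chi$ is torsion-free. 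Since $\mu(\cO_{A,B}) = \mu(A)$ for every set of places $B \supseteq S_\infty$, and since $\mu(A)[p^\infty]$ is cyclic, hence $\Z_p$-indecomposable, at most one of its isotypic components is nonzero and that one reduces mod $p$ to $\mu_p(A)^\chi$; so the hypothesis $\mu_p(A)^\chi = 0$ forces $(\cO_A[1/p]^\times \tensor \Z_p)^\chi$ and $(\cO_{A,S}^\times \tensor \Z_p)^\chi$ to be torsion-free. One last general fact I will use repeatedly: for a permutation $\F_p G$-module $\F_p^T$, the quantity $\dim_{\F_p}(\F_p^T)^\chi$ equals the number of $G$-orbits in $T$ on which $\chi$ restricts trivially to a point stabilizer, and this count is unchanged if $\F_p$ is replaced by $\Q$.

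For (a), the first paragraph gives $u = \dim_{\Q_p}(\cO_A[1/p]^\times \tensor \Q_p)^\chi$, i.e.\ the multiplicity of $\chi$ in the $\Q G$-module $\cO_A[1/p]^\times \tensor \Q$. I would then invoke the Galois-equivariant Dirichlet $S$-unit theorem: the logarithmic embedding $u \mapsto (\log\|u\|_v)_{v \in S_\infty \cup S_p}$, with the absolute values normalized so the product formula holds, is $G$-equivariant with kernel $\mu(A)$ and image a full-rank lattice in the trace-zero hyperplane (the image lands there by the product formula), so $\cO_A[1/p]^\times \tensor \Q \isom (\Q^{S_\infty} \oplus \Q^{S_p})_{\textup{sum }0}$ as $\Q G$-modules, with $\Q^{S_\infty}$ and $\Q^{S_p}$ permutation modules. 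The multiplicity of $\chi$ in this sum-zero submodule is $\dim(\Q^{S_\infty})^\chi + \dim(\Q^{S_p})^\chi$ when $\chi \ne 1$ (the $G$-equivariant sum map to the trivial representation kills every nontrivial isotypic piece) and one less when $\chi = 1$; with the last sentence of the previous paragraph this is precisely the claimed formula.

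For (b), tensor the four-term exact sequence~\eqref{E:approximate 4-term} with $\Z_p$ and take $\chi$-isotypic components --- both operations exact --- to obtain an exact sequence of finitely generated $\Z_p$-modules
\[
0 \to (\cO_A[1/p]^\times\tensor\Z_p)^\chi \to (\cO_{A,S}^\times\tensor\Z_p)^\chi \to (I^p_{A,S}\tensor\Z_p)^\chi \to (\Cl(\cO_A[1/p])\tensor\Z_p)^\chi \to 0 .
\]
The last term is finite and the first three are $\Z_p$-free (the first two by the torsion-freeness above, the third since $I^p_{A,S}$ is $\Z$-free), so equating $\Q_p$-dimensions gives $\rank (\cO_{A,S}^\times\tensor\Z_p)^\chi = \rank(I^p_{A,S}\tensor\Z_p)^\chi + \rank(\cO_A[1/p]^\times\tensor\Z_p)^\chi$. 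Reducing mod $p$ and using $\dim_{\F_p}(M/p)^\chi = \rank(M\tensor\Z_p)^\chi$ when the $\chi$-part of $M\tensor\Z_p$ is torsion-free, together with $\rank(\cO_A[1/p]^\times\tensor\Z_p)^\chi = u$ from (a), yields (b).

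For (c), $(I^p_{A,S}/p)^\chi$ is the $\chi$-part of the permutation $\F_p G$-module on the finite places of $A$ lying above $S \setminus \{p\}$. Because $A/\Q$ is Galois, the places over a rational prime $\ell$ form one $G$-orbit $\isom G/D_\ell$, contributing $1$ to $\dim_{\F_p}(I^p_{A,S}/p)^\chi$ exactly when $\chi|_{D_\ell}$ is trivial --- in particular whenever $\ell$ splits completely in $A$. By the Chebotarev density theorem there are infinitely many such $\ell$ (density $1/\#G$), all but finitely many different from $p$, and adjoining $N$ of them to $S$ forces $\dim_{\F_p}(I^p_{A,S}/p)^\chi \ge N$; enlarging $S$ only makes it easier for the finite primes in $S_A^p$ to generate $\Cl(\cO_A[1/p])$. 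The only genuinely delicate point is (a): the hypothesis $\mu_p(A)^\chi = 0$ is exactly what strips off the $p$-power torsion so that the relevant $\chi$-part becomes $\Z_p$-free, and the unit theorem must be set up $G$-equivariantly with the correct normalization of absolute values so that the product formula pins down the trace-zero hyperplane; granting that, (b) and (c) are routine bookkeeping.
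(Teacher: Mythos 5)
Your argument is correct and follows essentially the same route as the paper: the $G$-equivariant Dirichlet $S$-unit theorem identifying $\cO_A[1/p]^\times$ modulo torsion with the sum-zero part of the permutation module on $S_\infty \cup S_p$ for (a), tensoring the four-term sequence with $\Z_p$ and comparing ranks of free $\chi$-parts for (b), and adjoining completely split primes for (c). The only (cosmetic) difference is that you track multiplicities via Teichm\"uller idempotents and $\Q_p$-ranks, whereas the paper passes through a $\Z_{(p)}G$-lattice determined by its character and then reduces mod $p$.
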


\begin{proof}\hfill
\begin{enumerate}[\upshape (a)]
\item 
The Dirichlet $S$-unit theorem implies that the abelian group
$\cO_A[1/p]^\times$ is finitely generated with torsion subgroup $\mu(A)$.
Let $M$ be the $\Z G$-module $\cO_A[1/p]^\times/\mu(A)$.
Tensoring the exact sequence
\[
	\mu(A) \To \cO_A[1/p]^\times \To M \To 0
\]
with $\F_p$ and taking $\chi$-isotypic components yields
\[
	0 \To \left( \cO_A[1/p]^\times/p \right)^\chi \To (M/p)^\chi \To 0,
\]
so $u = \dim_{\F_p} (M/p)^\chi$.

On the other hand, the proof of the Dirichlet $S$-unit theorem yields
\[
	M \tensor \R 
	\isom \cO_A[1/p]^\times \tensor \R 
	\isom \left(\R^{S_\infty \union S_p}\right)_{\textup{sum~$0$}}
\]
as $\R G$-modules.
A $\Z_{(p)} G$-module that is free of finite rank over $\Z_{(p)}$
is determined by its character, so
\[
	M \tensor \Z_{(p)} 
	\isom \left( \Z_{(p)}^{S_\infty \union S_p}\right)_{\textup{sum~$0$}}
\]
as $\Z_{(p)} G$-modules.
Both sides are free over $\Z_{(p)}$, so we may tensor with $\F_p$ to obtain
\[
	M/p \isom \left( \F_p^{S_\infty \union S_p}\right)_{\textup{sum~$0$}}
\]
as $\F_p G$-modules.
In other words, there is an exact sequence
\[
0 \To M/p \To \F_p^{S_\infty} \directsum \F_p^{S_p} \To \F_p \To 0.
\]
Taking dimensions of the $\chi$-components yields the formula for $u$.
\item 
The composition $G \stackrel{\chi}\to (\Z/p\Z)^\times \injects \Z_p^\times$ 
lets us identify $\chi$ with a $\Z_p$-representation of $G$.
Tensor \eqref{E:approximate 4-term} with $\Z_p$,
and take $\chi$-isotypic components:
\[
	0 \to \left( \cO_A[1/p]^\times \tensor \Z_p \right)^\chi 
	\to \left( \cO_{A,S}^\times \tensor \Z_p \right)^\chi 
	\to \left( I^p_{A,S} \tensor \Z_p \right)^\chi 
	\to \left( \Cl(\cO_A[1/p]) \tensor \Z_p \right)^\chi 
	\to 0.
\]
Since $\mu_p(A)^\chi=0$, the first three $\Z_p$-modules are free;
on the other hand, the last is finite as a set.
Take $\Z_p$-ranks.
If $V$ is a $\Z_p G$-module such that $V^\chi$ is a free $\Z_p$-module
of finite rank,
then $\dim_{\F_p} (V/p)^\chi = \rank_{\Z_p} V^\chi$.
This proves the formula.
\item

If $S$ contains $m$ rational primes that split completely in $A$,
then $I^p_{A,S}$ contains $(\Z G)^m$,
so $\dim_{\F_p} \left(I^p_{A,S}/p \right)^\chi \ge m$,
and $m$ can be chosen arbitrarily large.\qedhere
\end{enumerate}
\end{proof}

Sequence~\eqref{E:class group as cokernel}
and Lemma~\ref{L:u-lemma}(b,c) imply that 
$\left( \Cl(\cO_A[1/p])/p \right)^\chi$
is naturally the cokernel of a linear map $\F_p^{m+u} \to \F_p^m$
for arbitrarily large $m$.
In Section~\ref{S:heuristics for even K-groups},
we will vary $(A,\chi)$ in a family with constant $u$-value
and conjecture that 
the distribution of $\left( \Cl(\cO_A[1/p])/p \right)^\chi$
equals the 
limit as $m \to \infty$ of the distribution of the cokernel of 
a \emph{random} linear map $\F_p^{m+u} \to \F_p^m$;
the precise statement is Conjecture~\ref{C:class group heuristic}.
For now, we mention that this limiting distribution 
and the limiting expected size of the cokernel 
are known: 

\begin{proposition}
\label{P:alpha distribution}
Fix a prime $p$ and an integer $u \ge 0$.
For $m \ge 0$, 
let $N$ be a linear map $\F_p^{m+u} \to \F_p^m$ chosen uniformly at random,
and let $\calN_{p,u,m}$ be the random variable $\dim_{\F_p} \coker(N)$.
Then
\begin{enumerate}[\upshape (a)] 
\item \label{I:alpha formula}
For each $r \ge 0$,
\[
\lim_{m \rightarrow \infty} \Prob(\calN_{p,u,m}=r) 
	\; = \; \alpha_{p,u,r} \colonequals 
\frac{\prod_{i = r+1}^\infty (1 - p^{-i})}
     {p^{r(u+r)} \prod_{i = 1}^{r+u} (1 - p^{-i})}.
\]
\item \label{I:alpha probability}
We have $\sum_{r = 0}^\infty \alpha_{p,u,r} = 1$.
\item \label{I:alpha average} 
We have $\sum_{r = 0}^\infty p^r \alpha_{p,u,r} = 1+p^{-u}$.
\end{enumerate}
\end{proposition}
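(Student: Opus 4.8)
The plan is to prove part~\ref{I:alpha formula} by an exact count of linear maps according to the dimension of their cokernel followed by a passage to the limit, and to derive parts~\ref{I:alpha probability} and~\ref{I:alpha average} from one uniform tail estimate that drops out of this count, together with a direct evaluation of the expected size of a random cokernel.

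For part~\ref{I:alpha formula}: a linear map $N \colon \F_p^{m+u} \to \F_p^m$ has $\dim_{\F_p}\coker(N) = r$ exactly when it has rank $m-r$, and the number of such $N$ equals the number $\binom{m}{m-r}_p$ of $(m-r)$-dimensional subspaces of $\F_p^m$ times the number $\prod_{i=0}^{m-r-1}(p^{m+u}-p^i)$ of surjections $\F_p^{m+u} \twoheadrightarrow \F_p^{m-r}$. Dividing by the total count $p^{m(m+u)}$ gives a closed form for $\Prob(\calN_{p,u,m}=r)$. Setting $q = p^{-1}$ and $(q)_k = \prod_{i=1}^k(1-q^i)$, I would pull every power of $p$ out of this expression; a short computation shows that all $m$-dependent powers of $p$ cancel, leaving the factor $p^{-r(r+u)}$ exactly, and that what remains is $(q)_r^{-1}\prod_{i=0}^{r-1}(1-p^{-(m-i)})\prod_{j=r+u+1}^{m+u}(1-p^{-j})$, which tends to $(q)_\infty/\big((q)_r(q)_{r+u}\big)$ as $m \to \infty$. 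Multiplying the two pieces reproduces $\alpha_{p,u,r}$.

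The quantitative core for parts~\ref{I:alpha probability} and~\ref{I:alpha average} is the uniform bound $\Prob(\calN_{p,u,m}=r) \le C_p\,p^{-r(r+u)}$, valid for all $m,r$, with $C_p \colonequals (q)_\infty^{-1}$: this is immediate from the closed form just obtained, since every factor $1-p^{-k}$ lies in $(0,1)$ and $(q)_r^{-1} \le (q)_\infty^{-1}$. Because $\sum_{r\ge 0}\Prob(\calN_{p,u,m}=r)=1$ for every $m$, this bound lets us pass the limit inside the sum (dominated convergence), giving $\sum_{r\ge 0}\alpha_{p,u,r}=1$, which is part~\ref{I:alpha probability}. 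Equivalently, part~\ref{I:alpha probability} is the classical Durfee-rectangle identity $\sum_{r\ge 0}q^{r(r+u)}/\big((q)_r(q)_{r+u}\big)=1/(q)_\infty$, obtained by cutting a partition along the largest $n\times(n+u)$ rectangle sitting in its Young diagram.

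For part~\ref{I:alpha average}: every $N \colon \F_p^{m+u}\to\F_p^m$ satisfies $\#\coker(N)=p^{-u}\#\ker(N)$, and $\E[\#\ker(N)]=\sum_{v\in\F_p^{m+u}}\Prob(Nv=0)=1+(p^{m+u}-1)p^{-m}$ because $Nv$ is uniform on $\F_p^m$ whenever $v\neq 0$; hence $\E[p^{\calN_{p,u,m}}]=1+p^{-u}-p^{-m-u}$. The bound above gives $p^r\Prob(\calN_{p,u,m}=r)\le C_p\,p^{-r(r+u-1)}$, summable over $r$ uniformly in $m$, so letting $m\to\infty$ in $\E[p^{\calN_{p,u,m}}]=\sum_{r\ge 0}p^r\Prob(\calN_{p,u,m}=r)$ and using part~\ref{I:alpha formula} yields $\sum_{r\ge 0}p^r\alpha_{p,u,r}=1+p^{-u}$. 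The only point requiring care is the interchange of limit and infinite sum in parts~\ref{I:alpha probability} and~\ref{I:alpha average}; the uniform tail bound is exactly what licenses it, and everything else is routine matrix counting and $q$-Pochhammer bookkeeping.
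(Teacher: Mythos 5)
Your argument is correct, and it is genuinely different in character from the paper's proof: the paper simply cites the literature, quoting \cite{Kovalenko-Levitskaja1975a}*{Theorem~1} for part~(a) and the $q=1/p$, $\alpha=0$ and $\alpha=1$ cases of \cite{Cohen-Lenstra1984}*{Corollary~6.7} for parts~(b) and~(c), whereas you give a self-contained derivation. Your rank count is right (maps of corank $r$ are counted by $\binom{m}{m-r}_p$ choices of image times $\prod_{i=0}^{m-r-1}(p^{m+u}-p^i)$ surjections), the bookkeeping does yield the exponent $p^{-r(r+u)}$ with remaining factor $(q)_r^{-1}\prod_{i=0}^{r-1}(1-p^{-(m-i)})\prod_{j=r+u+1}^{m+u}(1-p^{-j})\to (q)_\infty/\bigl((q)_r(q)_{r+u}\bigr)$, and the uniform bound $\Prob(\calN_{p,u,m}=r)\le (q)_\infty^{-1}p^{-r(r+u)}$ legitimately justifies both interchanges of limit and sum. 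The expectation computation $\E[p^{\calN_{p,u,m}}]=1+p^{-u}-p^{-m-u}$ via $\#\coker(N)=p^{-u}\#\ker(N)$ and $\E[\#\ker(N)]=1+(p^{m+u}-1)p^{-m}$ is a clean way to get (c) without any $q$-series identity. What your route buys is a fully elementary, quantitative proof with explicit error control (and, as you note, (b) can alternatively be seen as a Durfee-rectangle identity); what the paper's route buys is brevity and an explicit link to the Cohen--Lenstra formalism, where $\alpha_{p,u,r}$ appears as a $u$-probability, which is the conceptual frame the rest of the paper leans on.
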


\begin{proof}\hfill
\begin{enumerate}[\upshape (a)]
\item 
This is \cite{Kovalenko-Levitskaja1975a}*{Theorem~1}.
\item 
This is the $q=1/p$ and $\alpha=0$ case 
of \cite{Cohen-Lenstra1984}*{Corollary~6.7}.
\item 
This is the $q=1/p$ and $\alpha=1$ case 
of \cite{Cohen-Lenstra1984}*{Corollary~6.7}.\qedhere
\end{enumerate}
\end{proof}

\begin{remark}
The constant $\alpha_{p,u,r}$ appeared also 
in \cite{Cohen-Lenstra1984}*{Theorem~6.3}, 
as the $u$-probability that a random finite abelian $p$-group has $p$-rank $r$.
The connection between $u$-probabilities and coranks of random matrices 
was made in~\cite{Friedman-Washington1989}.
\end{remark}

\section{Heuristics for class groups and even \texorpdfstring{$K$}{K}-groups associated to quadratic fields}
\label{S:heuristics for even K-groups}

\subsection{Calculation of \texorpdfstring{$u$}{u}}

We now specialize Section~\ref{S:heuristics for class groups}
to the setting of Section~\ref{S:K-groups of quadratic field}.
Thus $F$ is $\Q(\sqrt{d})$ for some $d \in \Q^\times$
such that $d$ and $p^*$ are independent in $\Q^\times/\Q^{\times 2}$;
by multiplying $d$ by a square, we may assume that 
$d$ is a fundamental discriminant: $d=d_F$.
Also, $E \colonequals F(\zeta_p)$ and $\chi \colonequals \chi_{-n,-1}$.
Define $u(E,\chi)$ as in the sentence before Lemma~\ref{L:u-lemma}.

\begin{proposition}
\label{P:u-values for quadratic fields}
The value $u(E,\chi)$ is given by the following table:
\begin{center}
\begin{tabular}{c|c||c|c}
&& $d>0$ & $d<0$ \\ \hline \hline
$\textup{$n$ even}$ & $n \equiv 0 \pmod{p-1}$, $d \in \Q_p^{\times 2}$ & $2$ & $1$ \\ 
& $n \equiv \frac{p-1}{2} \pmod{p-1}$, $p^* d \in \Q_p^{\times 2}$ & $2$ & $1$ \\ 
& $\textup{all other cases}$ & $1$ & $0$ \\ \hline
$\textup{$n$ odd}$ & $n \equiv \frac{p-1}{2} \pmod{p-1}$, $p^* d \in \Q_p^{\times 2}$ & $1$ & $2$ \\ 
&  $\textup{all other cases}$ & $0$ & $1$ \\ 
\end{tabular}
\end{center}
\end{proposition}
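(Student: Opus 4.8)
The plan is to feed Lemma~\ref{L:u-lemma}(a) with $A=E$ and $\chi=\chi_{-n,-1}$, reducing the computation of $u=u(E,\chi)$ to understanding the two permutation modules $\F_p^{S_\infty}$ and $\F_p^{S_p}$, where $S_\infty$ (resp.\ $S_p$) is the set of places of $E$ above $\infty$ (resp.\ above $p$). First I would check the hypothesis: $\Gal(E/\Q)$ acts on $\mu_p(E)=\mu_p$ through $\chi_1$ on $\Gal(E/F)$ and trivially through $\tau$, whereas $\chi_{-n,-1}(\tau)=-1$; hence $\chi$ is not that character, so $\mu_p(E)^\chi=0$, and the same remark shows $\chi\ne 1$, so the correction term in Lemma~\ref{L:u-lemma}(a) vanishes and $u=\dim_{\F_p}(\F_p^{S_\infty})^\chi+\dim_{\F_p}(\F_p^{S_p})^\chi$. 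It remains to evaluate each summand, each of which will turn out to be $0$ or $1$.

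For the archimedean term: $E$ contains $\Q(\zeta_p)$ and is therefore totally imaginary, so every place in $S_\infty$ is complex, with decomposition group of order $2$; since $\Gal(E/\Q)$ is abelian and acts transitively on $S_\infty$, there is a \emph{unique} complex conjugation $c\in\Gal(E/\Q)$ and $\F_p^{S_\infty}\isom \F_p[\Gal(E/\Q)/\langle c\rangle]$. By Frobenius reciprocity (valid since $p\nmid\#\Gal(E/\Q)$), $\dim_{\F_p}(\F_p^{S_\infty})^\chi$ equals $1$ if $\chi(c)=1$ and $0$ otherwise. I would then identify $c$: it acts on $\Q(\zeta_p)$ as $\zeta_p\mapsto\zeta_p^{-1}$, i.e.\ as $-1\in(\Z/p\Z)^\times$, and on $F$ it acts trivially if $d>0$ and as $\tau$ if $d<0$; evaluating $\chi_{-n,-1}$ gives $\chi(c)=(-1)^n$ when $d>0$ and $\chi(c)=(-1)^{n+1}$ when $d<0$. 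Hence $(\F_p^{S_\infty})^\chi\ne 0$ exactly when ($d>0$ and $n$ even) or ($d<0$ and $n$ odd).

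For the term at $p$: the proof of Proposition~\ref{P:p-torsion in Br for quadratic fields} already establishes $\#S_p\in\{1,2\}$, with $\#S_p=2$ precisely when $p$ splits in $F$ (equivalently $d\in\Q_p^{\times 2}$) or in $F'=\Q(\sqrt{p^*d})$ (equivalently $p^*d\in\Q_p^{\times 2}$), and in those cases the decomposition group of a prime above $p$ is $\Gal(E/F)$ or $\Gal(E/F')$ respectively. If $\#S_p=1$ the module $\F_p^{S_p}$ is trivial, so its $\chi$-part is $0$. If $\#S_p=2$ then $\F_p^{S_p}\isom\F_p\directsum\psi$, where $\psi$ is the quadratic character of $\Gal(E/\Q)$ with kernel that decomposition group, and $(\F_p^{S_p})^\chi\ne0$ iff $\chi=\psi$: in the $F$-case $\psi$ is trivial on $\Gal(E/F)$ and sends $\tau\mapsto -1$, i.e.\ $\psi=\chi_{0,-1}$, so $\chi_{-n,-1}=\psi$ iff $n\equiv 0\pmod{p-1}$; in the $F'$-case $\psi$ restricts to the quadratic character $\chi_{(p-1)/2}$ on $\Gal(E/F)$ and again sends $\tau\mapsto -1$ (as $\tau$ fixes neither $F$ nor $F'$), i.e.\ $\psi=\chi_{(p-1)/2,-1}$, so $\chi_{-n,-1}=\psi$ iff $n\equiv\frac{p-1}{2}\pmod{p-1}$ — exactly the splitting data recorded in the proof of Proposition~\ref{P:p-torsion in Br for quadratic fields}. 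Thus $(\F_p^{S_p})^\chi\ne0$ exactly when ($d\in\Q_p^{\times 2}$ and $n\equiv 0\pmod{p-1}$) or ($p^*d\in\Q_p^{\times 2}$ and $n\equiv\frac{p-1}{2}\pmod{p-1}$). Adding the two contributions and checking the resulting four-way split ($n$ even/odd, $d>0$/$d<0$) against the table — using that $p-1$ is even, so $n\equiv 0\pmod{p-1}$ forces $n$ even, and that the ``all other cases'' rows are precisely those where the $p$-adic contribution vanishes — finishes the proof. I expect the only real work to be this final bookkeeping together with the precise identification of $c$ and of the decomposition groups at $p$ as explicit elements/subgroups of $(\Z/p\Z)^\times\times\{\pm1\}$ and their pairing with $\chi_{-n,-1}$; since the structural input at $p$ is inherited wholesale from Proposition~\ref{P:p-torsion in Br for quadratic fields}, there is no genuine obstacle beyond careful character arithmetic.
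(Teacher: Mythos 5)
Your proposal is correct and follows essentially the same route as the paper: verify $\mu_p(E)^\chi=0$ so Lemma~\ref{L:u-lemma}(a) applies, evaluate the archimedean term via complex conjugation $c=(-1,\pm1)$ and $\chi(c)=(-1)^{-n}\sgn(d)$, and get the term at $p$ from the splitting/decomposition-group analysis of Proposition~\ref{P:p-torsion in Br for quadratic fields}. The only (harmless) difference is that the paper imports the $S_p$ contribution by citing Corollary~\ref{C:p-torsion in Br} together with the already-computed character of $\Br(\cO_E[1/p])_p$, whereas you recompute that character directly from the decomposition groups $\Gal(E/F)$ and $\Gal(E/F')$ — the same underlying calculation.
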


\begin{proof}
Because $\Gal(E/\Q(\zeta_p))$ acts on $\mu_p(E)$ as $+1$, 
we have $\mu_p(E)^\chi=0$,
so Lemma~\ref{L:u-lemma}\eqref{E:u-formula} applies with $\chi = \chi_{-n,-1}$.
The complex conjugation in $\Gal(E/\Q) \isom (\Z/p\Z)^\times \times \{\pm1\}$
is $c \colonequals (-1,\pm 1)$, where the $\pm 1$ is $+1$ if $F$ is real,
and $-1$ if $F$ is imaginary.
The $G$-set $S_\infty$ is isomorphic to $G/\langle c \rangle$,
so $\F_p^{S_\infty}$ is the $c$-invariant subrepresentation of 
the regular representation $\F_p G$.
The multiplicity of $\chi$ in $\F_p G$ is $1$,
so the multiplicity of $\chi$ in $\F_p^{S_\infty}$ is $1$ or $0$,
according to whether $\chi(c)$ is $1$ or $-1$.
By definition of $\chi_{-n,-1}$, we have $\chi(c)=(-1)^{-n} \sgn(d)$,
so
\[
	\dim_{\F_p} \left(\F_p^{S_\infty}\right)^{\chi}
	= \begin{cases}
		1, &\textup{if $(-1)^n d > 0$;} \\
		0, &\textup{if $(-1)^n d < 0$.} \\
	\end{cases}
\]
Next, Corollary~\ref{C:p-torsion in Br} implies
\[
	\dim_{\F_p} \left(\F_p^{S_p}\right)^\chi 
	= \dim_{\F_p} (\Br(\cO_E[1/p])_p)^\chi,
\]
which is given by Proposition~\ref{P:p-torsion in Br for quadratic fields}.
The third term in Lemma~\ref{L:u-lemma}\eqref{E:u-formula} is $0$
since $\chi \ne 1$.
\end{proof}

\subsection{Distribution}

Suppose that $\scriptF$ is a family of quadratic fields.
For $X>0$, let $\scriptF_{<X}$ be the set of $F \in \scriptF$
such that $|d_F| < X$.
For any function $\gamma \colon \scriptF \to \Z_{\ge 0}$,
define the following notation: 
\begin{align*}
	\Prob(\gamma(F) = r) &\colonequals 
	\lim_{X \to \infty} \frac{\# \{F \in \scriptF_{<X} : \gamma(F) = r \}}
				{\# \scriptF_{<X}}, \\
	\Average(\gamma) &\colonequals
	\lim_{X \to \infty} \frac{\sum_{F \in \scriptF_{<X}} \gamma(F) }
				{\# \scriptF_{<X}}.
\end{align*}

Our heuristic is formalized in the following statement.

\begin{conjecture}[Distribution of class group components]
\label{C:class group heuristic}
Fix one of the ten boxes \textup{(}below and right of the double lines\textup{)}
in the table of Proposition~\ref{P:u-values for quadratic fields}, 
and fix a corresponding $n \ge 1$.
Let $F$ vary over the family $\scriptF$ of quadratic fields
with $d$ satisfying the conditions defining that box,
and let $u$ be as calculated in 
Proposition~\ref{P:u-values for quadratic fields}.
Then the distribution of $\left( \Cl(\cO_E[1/p])/p \right)^\chi$
equals the limit as $m \to \infty$
of the distribution of a cokernel of a random linear map
$\F_p^{m+u} \to \F_p^m$; by this we mean,
in the notation of Proposition~\ref{P:alpha distribution},
that for each $r \in \Z_{\ge 0}$,
\[
   \Prob \left( \dim_{\F_p} \left( \Cl(\cO_E[1/p])/p \right)^\chi = r \right)
   = \lim_{m \rightarrow \infty} \Prob(\calN_{p,u,m}=r),
\]
which by Proposition~\ref{P:alpha distribution}\eqref{I:alpha formula} 
equals $\alpha_{p,u,r}$.
\end{conjecture}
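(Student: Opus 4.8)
Since Conjecture~\ref{C:class group heuristic} is a Cohen--Lenstra style heuristic, what I would supply in place of a proof is a derivation of the model together with the consistency checks that single it out as the natural one.

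First I would recall that the machinery of Section~\ref{S:heuristics for class groups} has already reduced the left-hand side to a cokernel. For every large enough finite set $S\ni p,\infty$, sequence~\eqref{E:class group as cokernel} together with Lemma~\ref{L:u-lemma}(b,c) exhibits $\left(\Cl(\cO_E[1/p])/p\right)^\chi$ as $\coker\phi_S$, where
\[
	\phi_S\colon \left(\cO_{E,S}^\times/p\right)^\chi \To \left(I^p_{E,S}/p\right)^\chi
\]
has source of dimension $m+u$ and target of dimension $m\colonequals\dim_{\F_p}\left(I^p_{E,S}/p\right)^\chi$, with $m\to\infty$ as $S$ grows and $u$ the value tabulated in Proposition~\ref{P:u-values for quadratic fields}. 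Conjecture~\ref{C:class group heuristic} is then the assertion that, as $F$ ranges over $\scriptF$ ordered by $|d_F|$ with $S$ taken large, the cokernels of the $\phi_S$ are distributed as the cokernels of uniformly random linear maps $\F_p^{m+u}\to\F_p^m$; Proposition~\ref{P:alpha distribution}\eqref{I:alpha formula} then identifies the $m\to\infty$ limit with $\alpha_{p,u,r}$.

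The substance of the argument is the claim that uniform maps of this exact shape are the correct ensemble, and three observations carry it. \emph{The shape is forced rather than chosen}: the auxiliary parameter $m$ is harmless because the random-matrix probability $\Prob(\calN_{p,u,m}=r)$ converges as $m\to\infty$ to a value depending only on $u$ (Proposition~\ref{P:alpha distribution}\eqref{I:alpha formula}), while $u$ is not a modelling input at all but is pinned down exactly by the unit and Brauer computations of Proposition~\ref{P:u-values for quadratic fields}; since the true group $\left(\Cl(\cO_E[1/p])/p\right)^\chi$ is independent of $S$, passing to the $m\to\infty$ limit creates no inconsistency. \emph{There is no hidden structure}: one must exclude a functorial pairing or duality that would confine $\coker\phi_S$ to a proper subfamily of matrices, as occurs for Selmer and Tate--Shafarevich groups, and it is precisely the hypothesis $\mu_p(E)^\chi=0$, checked in the proof of Proposition~\ref{P:u-values for quadratic fields}, that removes such a constraint and leaves the full space of linear maps as the natural model. \emph{The function-field analogue behaves this way}: there the corresponding maps are genuine Frobenius conjugacy classes whose equidistribution is known, giving the picture of~\cite{Friedman-Washington1989} that the number-field case is expected to mimic.

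The real obstacle, and the reason the statement stays a conjecture, is the last point taken literally: no arithmetic mechanism is known that would force the $\phi_S$ to equidistribute over number fields, just as none is known for the original Cohen--Lenstra heuristics. What can be done, and what the remainder of the paper exploits, is to extract a testable consequence: the first moment $\sum_{r\ge 0}p^r\alpha_{p,u,r}=1+p^{-u}$ of Proposition~\ref{P:alpha distribution}\eqref{I:alpha average}, which combined with Propositions~\ref{P:p-torsion in Br for quadratic fields} and~\ref{P:u-values for quadratic fields} yields the conjectural average order of $K_{2n}(\cO_F)_p$ recorded in Conjecture~\ref{C:average K-theory conjecture}---and it is this first-moment prediction that is proved unconditionally for $p=3$ in Theorem~\ref{T:average of K_3}.
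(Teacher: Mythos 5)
Your write-up matches the paper's own treatment: the statement is a conjecture with no proof in the paper, and its justification there is exactly the reduction you give---sequence~\eqref{E:class group as cokernel} with Lemma~\ref{L:u-lemma}(b,c) exhibiting $\left(\Cl(\cO_E[1/p])/p\right)^\chi$ as the cokernel of a map $\F_p^{m+u}\to\F_p^m$ with $m$ arbitrarily large and $u$ fixed by Proposition~\ref{P:u-values for quadratic fields}, modeled on a uniformly random map in the spirit of Friedman--Washington. You also correctly identify that only the first-moment consequence is established (for $p=3$) later in the paper, so nothing further is required here.
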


If Conjecture~\ref{C:class group heuristic} holds,
then substituting it and 
Proposition~\ref{P:p-torsion in Br for quadratic fields}
(with Remark~\ref{R:splits})
into Theorem~\ref{T:K_2n for quadratic field}
yields the following:

\begin{conjecture}[Distribution of $K$-groups modulo $p$ in residue classes]
\label{C:K-theory conjecture}
Fix $n \ge 1$ and an odd prime $p$ and $r \ge 0$.
As $F$ ranges over the quadratic fields $\Q(\sqrt{d})$ 
with $d$ satisfying the conditions defining a box below,
$\Prob \left( \dim_{\F_p} (K_{2n}(\cO_F)/p)^-  = r \right )$
is as follows:
\begin{center}
\begin{tabular}{c|c||c|c}

&& $d>0$ & $d<0$ \\ \hline \hline
$\textup{$n$ even}$ & $n \equiv 0 \pmod{p-1}$, $d \in \Q_p^{\times 2}$ & $\alpha_{p,2,r-1}$ & $\alpha_{p,1,r-1}$ \\ 
& $n \equiv \frac{p-1}{2} \pmod{p-1}$, $p^*d \in \Q_p^{\times 2}$ & $\alpha_{p,2,r-1}$ & $\alpha_{p,1,r-1}$ \\ 
& $\textup{all other cases}$ & $\alpha_{p,1,r}$ & $\alpha_{p,0,r}$ \\ \hline
$\textup{$n$ odd}$ & $n \equiv \frac{p-1}{2} \pmod{p-1}$, $p^* d \in \Q_p^{\times 2}$ & $\alpha_{p,1,r-1}$ & $\alpha_{p,2,r-1}$ \\ 
&  $\textup{all other cases}$ & $\alpha_{p,0,r}$ & $\alpha_{p,1,r}$ \\ 
\end{tabular}
\end{center}
The distribution of $\dim_{\F_p} K_{2n}(\cO_F)/p$ is the distribution of 
$\dim_{\F_p} (K_{2n}(\cO_F)/p)^-$ shifted by 
the constant $\kappa_{2n,p} = \dim_{\F_p} K_{2n}(\Z)_p$
of Section~\textup{\ref{S:even K-groups of Z}}.
\end{conjecture}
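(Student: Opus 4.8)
The plan is to substitute Proposition~\ref{P:p-torsion in Br for quadratic fields} (via Remark~\ref{R:splits}), Proposition~\ref{P:u-values for quadratic fields}, and Conjecture~\ref{C:class group heuristic} into Theorem~\ref{T:K_2n for quadratic field}; after these inputs only a finite bookkeeping over the ten boxes remains. Write $\chi \colonequals \chi_{-n,-1}$, fix one of the ten boxes, and let $F = \Q(\sqrt d)$ range over it (so in particular $d$ and $p^*$ are independent in $\Q^\times/\Q^{\times 2}$), ordered by $|d_F|$ as in Conjecture~\ref{C:class group heuristic}. The direct-sum decomposition of Theorem~\ref{T:K_2n for quadratic field} gives
\[
	\dim_{\F_p}(K_{2n}(\cO_F)/p)^- = \dim_{\F_p}\left(\Cl(\cO_E[1/p])/p\right)^{\chi} + \dim_{\F_p}\left(\Br(\cO_E[1/p])_p\right)^{\chi}.
\]
The first thing to check is that the Brauer summand is \emph{constant} on the box: Remark~\ref{R:splits} rewrites the conditions $d \in \Q_p^{\times 2}$ and $p^*d \in \Q_p^{\times 2}$ of Proposition~\ref{P:p-torsion in Br for quadratic fields} purely in terms of $d$, so Proposition~\ref{P:p-torsion in Br for quadratic fields} shows $\dim_{\F_p}(\Br(\cO_E[1/p])_p)^{\chi}$ to be $1$ on each of the six boxes lying in a row that carries a local condition on $d$, and $0$ on each of the four ``all other cases'' boxes. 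Write $\beta \in \{0,1\}$ for this box-constant.

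It follows that, for every $r \ge 0$,
\[
	\Prob\!\left(\dim_{\F_p}(K_{2n}(\cO_F)/p)^- = r\right) = \Prob\!\left(\dim_{\F_p}\left(\Cl(\cO_E[1/p])/p\right)^{\chi} = r-\beta\right),
\]
where we read $\alpha_{p,u,-1}$ as $0$, consistent with the vanishing of the $i=0$ factor in the numerator of $\alpha_{p,u,r}$. By Conjecture~\ref{C:class group heuristic} the right-hand side equals $\alpha_{p,u,r-\beta}$, where $u = u(E,\chi)$ is as tabulated in Proposition~\ref{P:u-values for quadratic fields}; reading the tables of Propositions~\ref{P:u-values for quadratic fields} and~\ref{P:p-torsion in Br for quadratic fields} together box by box then reproduces exactly the claimed entries (on the six boxes with $\beta = 1$ one has $u = 2$ when $(-1)^n d > 0$ and $u = 1$ when $(-1)^n d < 0$, giving $\alpha_{p,2,r-1}$ or $\alpha_{p,1,r-1}$; on the four boxes with $\beta = 0$ one has $u = 1$ or $u = 0$ by the same dichotomy, giving $\alpha_{p,1,r}$ or $\alpha_{p,0,r}$). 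For the final sentence, Theorem~\ref{T:K_2n for quadratic field} gives $(K_{2n}(\cO_F)/p)^+ \isom K_{2n}(\Z)/p$, of constant $\F_p$-dimension $\kappa_{2n,p} = \dim_{\F_p} K_{2n}(\Z)_p$; since $K_{2n}(\cO_F)/p$ is the direct sum of its $\pm$ parts, its distribution is that of the $-$ part translated by $\kappa_{2n,p}$.

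Since the statement is conditional on Conjecture~\ref{C:class group heuristic}, there is no deep obstacle. The step that genuinely needs care is the verification that the local conditions at $p$ appearing in Proposition~\ref{P:p-torsion in Br for quadratic fields} agree with the side conditions defining the ten boxes (this is exactly what Remark~\ref{R:splits} supplies), so that $\dim_{\F_p}(\Br(\cO_E[1/p])_p)^{\chi}$ is genuinely constant on each box and the class-group distribution is merely translated by the integer $\beta$ rather than mixed; one must also track which nominal boxes are empty for a given $p$ — for instance $n \equiv 0 \pmod{p-1}$ forces $n$ even, so the corresponding row is vacuous when $n$ is odd.
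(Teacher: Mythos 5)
Your proposal is correct and follows essentially the same route as the paper, which obtains this conjecture precisely by substituting Conjecture~\ref{C:class group heuristic} and Proposition~\ref{P:p-torsion in Br for quadratic fields} (via Remark~\ref{R:splits}) into Theorem~\ref{T:K_2n for quadratic field}; your box-by-box bookkeeping with the constant Brauer shift $\beta$ and the $u$-values of Proposition~\ref{P:u-values for quadratic fields} just makes that substitution explicit. The closing argument for the shift by $\kappa_{2n,p}$ via the $+$ part also matches the paper's intent.
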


In Conjecture~\ref{C:K-theory conjecture},
the family of fields was defined by specifying both the sign of $d$
and a $p$-adic condition on $d$.
To get the analogous probabilities for a larger family 
in which only the sign of $d$ is specified,
we can take a weighted combination of probabilities 
from Conjecture~\ref{C:K-theory conjecture}.

\begin{example}
\label{Ex:upper left}
Suppose that $n \equiv 0 \pmod{p-1}$
and $\scriptF$ is the family of real quadratic fields $F$.
By the first and third entries of the $d>0$ column 
of the table of Conjecture~\ref{C:K-theory conjecture}, 
$\Prob \left( \dim_{\F_p} (K_{2n}(\cO_F)/p)^- = r \right)$
equals
\begin{equation}
\label{E:weighted probability}
	\Prob(d \in \Q_p^{\times 2}) \; \alpha_{p,2,r-1}
	\; + \; 
	\Prob(d \notin \Q_p^{\times 2}) \; \alpha_{p,1,r}.
\end{equation}
Since $p^2 \nmid d$, we have $d \in \Q_p^{\times 2}$
if and only if $(d \bmod p) \in \F_p^{\times 2}$;
this condition is satisfied by $d$ lying in $p \cdot (p-1)/2$ of the $p^2-1$
nonzero residue classes modulo $p^2$.
The discriminant $d$ is equally likely to be in any of the $p^2-1$ nonzero
residue classes modulo $p^2$, 
as follows, for example, from \cite{Prachar1958}*{(1)},
so 
\[
	\Prob(d \in \Q_p^{\times 2}) 
	= \frac{p(p-1)/2}{p^2-1} 
        = \frac{p}{2p+2}.
\]
Substituting this and the complementary probability 
into~\eqref{E:weighted probability} yields 
\[
	\Prob \left( \dim_{\F_p} (K_{2n}(\cO_F)/p)^- = r \right)
	= \frac{p}{2p+2}  \alpha_{p,2,r-1} + \frac{p+2}{2p+2}  \alpha_{p,1,r}.
\]
\end{example}

Similar calculations show that 
Conjecture~\ref{C:K-theory conjecture} implies
all ten cases in Conjecture~\ref{C:K-theory conjecture for rank again} below.

\begin{conjecture}[Distribution of $K$-groups modulo $p$; cf.~Conjecture~\ref{C:K-theory conjecture for rank}]
\label{C:K-theory conjecture for rank again}
Fix $n \ge 1$ and an odd prime $p$ and $r \ge 0$.
As $F$ ranges over all 
real \textup{(}resp.\ imaginary\textup{)} quadratic fields, 
$\Prob \left( \dim_{\F_p} (K_{2n}(\cO_F)/p)^- = r \right)$
is given in the following table
by the entry in the row determined by $n$ 
and column determined by the signature:
\begin{center}
\setstretch{1.2}
\resizebox{\linewidth}{!}{
\begin{tabular}{c|c||c|c}
&& \textup{real} & \textup{imaginary} \\ \hline \hline
$\textup{$n$ even}$ & $n \equiv 0 \pmod{p-1}$ & $
\frac{p}{2p+2}  \alpha_{p,2,r-1} + \frac{p+2}{2p+2}  \alpha_{p,1,r}
$ & $
\frac{p}{2p+2} \alpha_{p,1,r-1} + \frac{p+2}{2p+2} \alpha_{p,0,r}
$
\\ 
& $n \equiv \frac{p-1}{2} \pmod{p-1}$ & $
\frac{1}{2p+2}\alpha_{p,2,r-1} + \frac{2p+1}{2p+2}\alpha_{p,1,r}
$ & $
\frac{1}{2p+2}\alpha_{p,1,r-1} + \frac{2p+1}{2p+2}\alpha_{p,0,r}
$
\\
& $\textup{all other cases}$ & $\alpha_{p,1,r}$ & $\alpha_{p,0,r}$
\\ \hline

$\textup{$n$ odd}$ & $n \equiv \frac{p-1}{2} \pmod{p-1}$ & 
$
\frac{1}{2p+2}\alpha_{p,1,r-1} + \frac{2p+1}{2p+2}\alpha_{p,0,r}
$ & $
\frac{1}{2p+2}\alpha_{p,2,r-1} + \frac{2p+1}{2p+2}\alpha_{p,1,r}
$
\\
&  $\textup{all other cases}$ & $\alpha_{p,0,r}$ & $\alpha_{p,1,r}$. \\ 
\end{tabular}
}
\setstretch{1.0}
\end{center}
The distribution of 
$\dim_{\F_p} K_{2n}(\cO_F)/p$ is the distribution of 
$\dim_{\F_p} (K_{2n}(\cO_F)/p)^-$ shifted by 
$\kappa_{2n,p}$.
\end{conjecture}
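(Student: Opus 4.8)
The plan is to derive Conjecture~\ref{C:K-theory conjecture for rank again} from Conjecture~\ref{C:K-theory conjecture} by an elementary averaging argument, carried out box by box as in Example~\ref{Ex:upper left}. Fix $n \ge 1$, an odd prime $p$, and a signature, and let $\scriptF$ be the family of real (resp.\ imaginary) quadratic fields, so that the fundamental discriminants $d = d_F$ with $F \in \scriptF$ are exactly the positive (resp.\ negative) ones. For the two ``all other cases'' rows (four of the ten boxes) we have $n \not\equiv 0$ and $n \not\equiv \frac{p-1}{2} \pmod{p-1}$, so the Brauer term of Theorem~\ref{T:K_2n for quadratic field} vanishes for every $F \in \scriptF$ by Proposition~\ref{P:p-torsion in Br for quadratic fields}; hence all of $\scriptF$ already satisfies the hypotheses of one ``all other cases'' box of Conjecture~\ref{C:K-theory conjecture}, and the asserted table entry is literally that entry, with no averaging required.

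For each of the remaining six boxes I would split $\scriptF$ into the two subfamilies cut out by the $p$-adic condition on $d$ appearing in the relevant row of Conjecture~\ref{C:K-theory conjecture}: either $d \in \Q_p^{\times 2}$ (when $n$ is even and $n \equiv 0 \pmod{p-1}$) or $p^* d \in \Q_p^{\times 2}$ (when $n \equiv \frac{p-1}{2} \pmod{p-1}$). By Proposition~\ref{P:p-torsion in Br for quadratic fields} the Brauer summand in Theorem~\ref{T:K_2n for quadratic field} is $\Z/p\Z$ precisely on the subfamily where that condition holds and $0$ on its complement, and the relevant rows of Proposition~\ref{P:u-values for quadratic fields} record that the $u$-value equals $u_0 + 1$ on the former subfamily and $u_0$ on the latter, where $u_0$ is the ``all other cases'' value of Proposition~\ref{P:u-values for quadratic fields} for the given parity of $n$ and sign of $d$ (explicitly $u_0 = 1$ if $(-1)^n d > 0$ and $u_0 = 0$ otherwise). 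Thus Conjecture~\ref{C:K-theory conjecture} gives $\Prob \left( \dim_{\F_p}(K_{2n}(\cO_F)/p)^- = r \right) = \alpha_{p,u_0+1,r-1}$ on the first subfamily --- the increment $u_0 \mapsto u_0+1$ and the shift $r \mapsto r-1$ both reflecting the extra $\Z/p\Z$ Brauer summand --- and $\alpha_{p,u_0,r}$ on the second, so, writing $\delta$ for the density of the first subfamily in $\scriptF$, the probability over all of $\scriptF$ is
\[
	\delta\,\alpha_{p,u_0+1,r-1} + (1-\delta)\,\alpha_{p,u_0,r},
\]
provided $\delta$ exists.

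It remains to evaluate $\delta$. The conditions $d \in \Q_p^{\times 2}$ and $p^* d \in \Q_p^{\times 2}$ depend on $d$ only through $d \bmod p^2$, and fundamental discriminants of a fixed sign are equidistributed among the $p^2 - 1$ nonzero residue classes modulo $p^2$; this equidistribution is the one external input, invoked exactly as in Example~\ref{Ex:upper left} via \cite{Prachar1958}. It follows that $\delta = \frac{p(p-1)/2}{p^2 - 1} = \frac{p}{2p+2}$ in the first case (the units modulo $p^2$ that are squares modulo $p$), and, using Remark~\ref{R:splits}, $\delta = \frac{(p-1)/2}{p^2 - 1} = \frac{1}{2p+2}$ in the second (the classes $pk \bmod p^2$ with $1 \le k \le p-1$ and $\Jacobi{-k}{p} = 1$). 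Substituting these values of $\delta$, the complementary densities $\frac{p+2}{2p+2}$ and $\frac{2p+1}{2p+2}$, and the values $u_0 \in \{0,1\}$ into the display reproduces each of the six remaining table entries, and the closing sentence on the distribution of $\dim_{\F_p} K_{2n}(\cO_F)/p$ follows from the corresponding sentence of Conjecture~\ref{C:K-theory conjecture}. I expect no genuine obstacle: granted Conjecture~\ref{C:K-theory conjecture}, the argument is routine, and the only points demanding care are the bookkeeping of which pair $(u_0, \textup{shift})$ attaches to which box --- dictated by Propositions~\ref{P:u-values for quadratic fields} and~\ref{P:p-torsion in Br for quadratic fields} --- and the appeal to equidistribution modulo $p^2$ that makes the conditional probabilities meaningful.
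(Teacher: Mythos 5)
Your proposal is correct and is essentially the paper's own derivation: the paper carries out exactly this weighted-combination argument in Example~\ref{Ex:upper left} (including the same appeal to equidistribution of fundamental discriminants modulo $p^2$ via \cite{Prachar1958} and the density $\frac{p}{2p+2}$), and then states that similar calculations give all ten entries; your treatment of the $\frac{1}{2p+2}$ density via Remark~\ref{R:splits} and the bookkeeping of $u_0$ matches Propositions~\ref{P:u-values for quadratic fields} and~\ref{P:p-torsion in Br for quadratic fields} as the paper uses them.
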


\subsection{Average order}

Proposition~\ref{P:u-values for quadratic fields}
combined with the reasoning of Section~\ref{S:heuristics for class groups} 
(Proposition~\ref{P:alpha distribution}\eqref{I:alpha average}, in particular)
suggests the following statement:

\begin{conjecture}[Average order of class group components in residue classes]
\label{C:average class group heuristic}
Fix an odd prime $p$. 
The average order of $\left( \Cl(\cO_E[1/p])/p \right)^\chi$ 
for $F$ ranging over the quadratic fields $\Q(\sqrt{d})$ 
satisfying the conditions defining a box below is as follows:
\begin{center}
\begin{tabular}{c|c||c|c}
&& $d>0$ & $d<0$ \\ \hline \hline
$\textup{$n$ even}$ & $n \equiv 0 \pmod{p-1}$, $d \in \Q_p^{\times 2}$ & $1+p^{-2}$ & $1+p^{-1}$ \\ 
& $n \equiv \frac{p-1}{2} \pmod{p-1}$, $p^*d \in \Q_p^{\times 2}$ & $1+p^{-2}$ & $1+p^{-1}$ \\ 
& $\textup{all other cases}$ & $1+p^{-1}$ & $2$ \\ \hline
$\textup{$n$ odd}$ & $n \equiv \frac{p-1}{2} \pmod{p-1}$, $p^* d \in \Q_p^{\times 2}$ & $1+p^{-1}$ & $1+p^{-2}$ \\ 
&  $\textup{all other cases}$ & $2$ & $1+p^{-1}$ \\ 
\end{tabular}
\end{center}
\end{conjecture}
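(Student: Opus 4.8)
The plan is to obtain this as the first-moment counterpart of Conjecture~\ref{C:class group heuristic}. Fix one of the ten boxes of Proposition~\ref{P:u-values for quadratic fields}, a corresponding $n\ge 1$, the family $\scriptF$ of quadratic fields $\Q(\sqrt d)$ cut out by that box, and the value $u=u(E,\chi)$ recorded there. Write $\gamma(F)\colonequals\dim_{\F_p}\left(\Cl(\cO_E[1/p])/p\right)^{\chi}$, so that the quantity whose average we must compute is $p^{\gamma(F)}$, since a finite $\F_p$-vector space of dimension $r$ has order $p^{r}$. Conjecture~\ref{C:class group heuristic} says precisely that $\Prob(\gamma(F)=r)=\alpha_{p,u,r}$ for every $r\ge 0$.

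Granting this, the core of the argument is the chain
\[
	\Average\bigl(p^{\gamma}\bigr)
	= \sum_{r\ge 0} p^{r}\,\Prob\bigl(\gamma(F)=r\bigr)
	= \sum_{r\ge 0} p^{r}\,\alpha_{p,u,r}
	= 1 + p^{-u},
\]
whose middle equality is Conjecture~\ref{C:class group heuristic} and whose last equality is Proposition~\ref{P:alpha distribution}\eqref{I:alpha average}. It then remains only to substitute the value of $u$ box by box. The ten boxes of Proposition~\ref{P:u-values for quadratic fields} realize exactly the values $u\in\{0,1,2\}$, so $1+p^{-u}$ is $2$, $1+p^{-1}$, or $1+p^{-2}$ accordingly, and matching these against the boxes reproduces the table of the conjecture entry by entry. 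For instance, for $n$ even with $n\equiv 0\pmod{p-1}$ and $d\in\Q_p^{\times 2}$ one has $u=2$ when $d>0$ and $u=1$ when $d<0$, giving average orders $1+p^{-2}$ and $1+p^{-1}$; and the last row ($n$ odd, all other cases) has $u=0$ when $d>0$ and $u=1$ when $d<0$, giving $2$ and $1+p^{-1}$.

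The genuine difficulty --- and the reason this must be stated as a conjecture in its own right rather than deduced from Conjecture~\ref{C:class group heuristic} --- is the \emph{first} equality in that chain: interchanging the limit over $X$ implicit in $\Average(\cdot)$ with the infinite sum over $r$. The distribution statement pins down each fixed corank but gives no control, uniform in $X$, over the tail of large coranks, and the weight $p^{r}$ amplifies precisely those contributions. To justify the interchange one would need a tail estimate showing that the contribution to $\Average(p^{\gamma})$ from fields with $\gamma(F)\ge R$ tends to $0$ as $R\to\infty$ uniformly in $X$ --- equivalently, uniform integrability of the family $\{p^{\gamma(F)}\}$ --- and this is exactly the kind of statement that is hard to pin down in Cohen--Lenstra style problems; I would not attempt it here, and I expect the conjecture to be presented as heuristically motivated only. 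I note that the one prime for which an unconditional average is obtained in this paper is $p=3$, and that is handled by an entirely different route --- a Davenport--Heilbronn count, refined using the work of Bhargava, Shankar, and Tsimerman, as described in the introduction --- rather than by legitimizing the passage to the limit above.
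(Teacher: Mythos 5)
Your proposal matches the paper's own treatment: since this statement is a conjecture, the paper only motivates it by exactly the chain you give (the $u$-values of Proposition~\ref{P:u-values for quadratic fields} together with $\sum_r p^r\alpha_{p,u,r}=1+p^{-u}$ from Proposition~\ref{P:alpha distribution}), and your box-by-box matching of $u\in\{0,1,2\}$ to the table is correct. Your caveat about interchanging the average with the sum over $r$ is precisely the point the paper itself makes in the remark at the end of Section~\ref{S:heuristics for even K-groups}, and the $p=3$ case is indeed proved unconditionally by the separate Davenport--Heilbronn/Bhargava--Shankar--Tsimerman argument, as you note.
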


Conjecture~\ref{C:average class group heuristic}, in turn,
would imply the following:

\begin{conjecture}[Average order of $K$-groups modulo $p$ in residue classes]
\label{C:average K-theory in residue classes conjecture}
Fix $n \ge 1$ and an odd prime $p$.
The average order of $(K_{2n}(\cO_F)/p)^-$
for $F$ ranging over the quadratic fields $\Q(\sqrt{d})$ 
satisfying the conditions defining a box below is as follows:
\begin{center}
\begin{tabular}{c|c||c|c}
&& $d>0$ & $d<0$ \\ \hline \hline
$\textup{$n$ even}$ & $n \equiv 0 \pmod{p-1}$, $d \in \Q_p^{\times 2}$ & $p+p^{-1}$ & $p+1$ \\ 
& $n \equiv \frac{p-1}{2} \pmod{p-1}$, $p^*d \in \Q_p^{\times 2}$ & $p+p^{-1}$ & $p+1$ \\ 
& $\textup{all other cases}$ & $1+p^{-1}$ & $2$ \\ \hline
$\textup{$n$ odd}$ & $n \equiv \frac{p-1}{2} \pmod{p-1}$, $p^* d \in \Q_p^{\times 2}$ & $p+1$ & $p+p^{-1}$ \\ 
&  $\textup{all other cases}$ & $2$ & $1+p^{-1}$ \\ 
\end{tabular}
\end{center}
To get the average order of $K_{2n}(\cO_F)/p$ itself,
multiply each entry by $p^{\kappa_{2n,p}}$.
\end{conjecture}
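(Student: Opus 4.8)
The plan is to deduce the displayed table from Conjecture~\ref{C:average class group heuristic} by combining the direct-sum decomposition of Theorem~\ref{T:K_2n for quadratic field} with the explicit value of the Brauer term supplied by Proposition~\ref{P:p-torsion in Br for quadratic fields}. The key point is that within any one of the ten boxes the Brauer contribution is a \emph{constant}, so that averaging the order of $(K_{2n}(\cO_F)/p)^-$ reduces to averaging the order of the class-group component and multiplying by that constant.

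First I would fix one of the ten boxes, say with family $\scriptF$, and a corresponding $n \ge 1$. For every $F \in \scriptF$, Theorem~\ref{T:K_2n for quadratic field} gives
\[
	\#(K_{2n}(\cO_F)/p)^-
	= \#\left( \Cl(\cO_E[1/p])/p \right)^{\chi_{-n,-1}}
	\cdot \#\left( \Br(\cO_E[1/p])_p \right)^{\chi_{-n,-1}},
\]
since the order of a finite abelian group is multiplicative over direct sums. By Proposition~\ref{P:p-torsion in Br for quadratic fields}, whose case hypotheses are exactly the conditions defining the boxes, the last factor equals $p$ on the boxes in the first two rows of the $n$-even part of the table and the first row of the $n$-odd part (those with $n \equiv 0 \pmod{p-1}$ and $d \in \Q_p^{\times 2}$, or with $n \equiv \frac{p-1}{2} \pmod{p-1}$ and $p^* d \in \Q_p^{\times 2}$), and equals $1$ on the remaining boxes; write $b_{\scriptF} \in \{1, p\}$ for this constant value. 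Since $b_{\scriptF}$ does not depend on $F \in \scriptF$, linearity of the limit defining $\Average$ yields
\[
	\Average\!\left( \#(K_{2n}(\cO_F)/p)^- \right)
	= b_{\scriptF} \cdot \Average\!\left( \#\left( \Cl(\cO_E[1/p])/p \right)^{\chi_{-n,-1}} \right),
\]
where both averages are taken over $F \in \scriptF$.

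It then remains to substitute the tabulated averages from Conjecture~\ref{C:average class group heuristic} and check box by box that $b_{\scriptF}$ times the class-group entry equals the $K$-group entry: on the two $n$-even boxes with nontrivial Brauer term, $p(1+p^{-2}) = p + p^{-1}$ for $d > 0$ and $p(1+p^{-1}) = p+1$ for $d < 0$; on the $n$-odd box with nontrivial Brauer term, $p(1+p^{-1}) = p+1$ for $d > 0$ and $p(1+p^{-2}) = p+p^{-1}$ for $d < 0$; and on the boxes with trivial Brauer term the class-group entries are simply copied. Finally, for the parenthetical last assertion: the $+$-part of Theorem~\ref{T:K_2n for quadratic field} gives $\#(K_{2n}(\cO_F)/p)^+ = \#K_{2n}(\Z)/p = p^{\kappa_{2n,p}}$, independent of $F$; since $p$ is odd, $K_{2n}(\cO_F)/p$ is the direct sum of its $\pm$-eigenspaces under $\tau$, so $\#K_{2n}(\cO_F)/p = p^{\kappa_{2n,p}} \cdot \#(K_{2n}(\cO_F)/p)^-$, and the average scales by the same constant $p^{\kappa_{2n,p}}$.

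I do not expect a genuine obstacle here: all the conceptual content is already contained in Theorem~\ref{T:K_2n for quadratic field}, Proposition~\ref{P:p-torsion in Br for quadratic fields}, and the assumed Conjecture~\ref{C:average class group heuristic}, and the argument itself is just multiplicativity of group order on direct sums together with pulling a constant out of the averaging limit. The only care needed is the bookkeeping: correctly matching each of the ten box conditions against the case hypotheses of Proposition~\ref{P:p-torsion in Br for quadratic fields} to pin down whether $b_{\scriptF}$ equals $1$ or $p$, and then carrying out the handful of multiplications against Conjecture~\ref{C:average class group heuristic}.
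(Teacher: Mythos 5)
Your proposal is correct and follows the same route the paper intends: within each box the Brauer factor from Proposition~\ref{P:p-torsion in Br for quadratic fields} is the constant $p$ or $1$, so substituting it and the averages of Conjecture~\ref{C:average class group heuristic} into the decomposition of Theorem~\ref{T:K_2n for quadratic field} gives the table, and the $+$\nobreakdash-part handles the final $p^{\kappa_{2n,p}}$ shift. Your bookkeeping of which boxes carry the nontrivial Brauer term matches the paper's.
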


Conjecture~\ref{C:average K-theory in residue classes conjecture}
implies Conjecture~\ref{C:average K-theory conjecture} below
for the family of all quadratic fields of given signature, 
by taking weighted combinations of averages.
For example, for $n \equiv 0 \pmod{p-1}$ and real quadratic fields
(cf.\ Example~\ref{Ex:upper left}),
\begin{align*}
	\Average \#(K_{2n}(\cO_F)/p)^-
	&= \Prob(d \in \Q_p^{\times 2}) \; (p+p^{-1}) 
	+ \Prob(d \notin \Q_p^{\times 2}) \; (1+p^{-1}) \\
	&= \frac{p}{2p+2} (p+p^{-1}) + \frac{p+2}{2p+2} (1+p^{-1}) \\
	&= \frac{p^3+p^2+4p+2}{2p^2+2p}.
\end{align*}

\begin{conjecture}[Average order of $K$-groups modulo $p$]
\label{C:average K-theory conjecture}
Fix $n \ge 1$ and an odd prime $p$.
The average order of $(K_{2n}(\cO_F)/p)^-$
for $F$ ranging over all 
real \textup{(}resp.\ imaginary\textup{)} quadratic fields
is given in the following table
by the entry in the row determined by $n$ 
and column determined by the signature:
\begin{center}
\setstretch{1.5}
\begin{tabular}{c|c||c|c}
&& \textup{real} & \textup{imaginary} \\ \hline \hline
$\textup{$n$ even}$ & $n \equiv 0 \pmod{p-1}$ & $\frac{p^3+p^2+4p+2}{2p^2+2p}$ & $\frac{p^2+3p+4}{2p+2}$ \\ 
& $n \equiv \frac{p-1}{2} \pmod{p-1}$ & $\frac{3p^2+3p+2}{2p^2+2p}$ & $\frac{5p+3}{2p+2}$ \\ 
& $\textup{all other cases}$ & $\frac{p+1}{p}$ & $2$ \\ \hline
$\textup{$n$ odd}$ & $n \equiv \frac{p-1}{2} \pmod{p-1}$ & $\frac{5p+3}{2p+2}$ & $\frac{3p^2+3p+2}{2p^2+2p}$ \\ 
&  $\textup{all other cases}$ & $2$ & $\frac{p+1}{p}$ \\ 
\end{tabular}
\setstretch{1.0}
\end{center}
To get the distribution of the order of $K_{2n}(\cO_F)/p$ itself,
multiply each entry by $p^{\kappa_{2n,p}}$.
\end{conjecture}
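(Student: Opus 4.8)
The plan is to deduce Conjecture~\ref{C:average K-theory conjecture} from Conjecture~\ref{C:average K-theory in residue classes conjecture} by partitioning each family of quadratic fields of a given signature into the subfamilies occurring as boxes of Conjecture~\ref{C:average K-theory in residue classes conjecture} and taking a density-weighted average, exactly as begun in Example~\ref{Ex:upper left}. Fix a signature and an $n \ge 1$, and write $d=d_F$ for the fundamental discriminant of $F$. By Remark~\ref{R:splits}, the boxes relevant to this $n$ are cut out by the conditions $d \in \Q_p^{\times 2}$ (which forces $p \nmid d$ and is relevant only when $n \equiv 0 \pmod{p-1}$), $p^* d \in \Q_p^{\times 2}$ (which forces $p \mid d$ and is relevant only when $n \equiv \tfrac{p-1}{2} \pmod{p-1}$), and ``neither''; these three conditions partition the family. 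Since $0 \ne \tfrac{p-1}{2}$ in $\Z/(p-1)\Z$, at most one of the first two is relevant for a given $n$, and $n \equiv 0 \pmod{p-1}$ forces $n$ even---this explains the shape of the table.

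Next I would compute the densities. As $p^2 \nmid d$, the class of $d$ modulo $p^2$ is one of the $p^2-1$ nonzero classes, and each occurs equally often as $F$ ranges over the family, by the equidistribution of fundamental discriminants modulo $p^2$ cited in Example~\ref{Ex:upper left} (namely \cite{Prachar1958}*{(1)}). Counting the classes satisfying each $p$-adic condition then gives
\[
\Prob(d \in \Q_p^{\times 2}) = \frac{p}{2p+2}, \qquad \Prob(p^* d \in \Q_p^{\times 2}) = \frac{1}{2p+2},
\]
so the complements of these two conditions have densities $\tfrac{p+2}{2p+2}$ and $\tfrac{2p+1}{2p+2}$ respectively.

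Finally I would assemble the ten entries. For each, the family of the stated signature is either entirely of ``all other cases'' type---giving the average $1+p^{-1}$ for $d>0$ and $2$ for $d<0$ from Conjecture~\ref{C:average K-theory in residue classes conjecture}---or splits into one special box (of density $\tfrac{p}{2p+2}$ or $\tfrac{1}{2p+2}$, carrying the corresponding average from that conjecture) together with its complement (on which Conjecture~\ref{C:average K-theory in residue classes conjecture} again yields the ``all other cases'' average). Because the average over a disjoint union is the density-weighted sum of the averages over the pieces, each entry is a single weighted sum; for instance, for $n$ even with $n \equiv 0 \pmod{p-1}$ and $F$ real,
\[
\frac{p}{2p+2}\,(p+p^{-1}) + \frac{p+2}{2p+2}\,(1+p^{-1}) = \frac{p^3+p^2+4p+2}{2p^2+2p},
\]
matching Example~\ref{Ex:upper left}, and the remaining nine simplify to the tabulated rational functions in the same way. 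The last assertion follows from Theorem~\ref{T:K_2n for quadratic field}: since $(K_{2n}(\cO_F)/p)^+ \isom K_{2n}(\Z)/p$ has constant order $p^{\kappa_{2n,p}}$, we get $\#(K_{2n}(\cO_F)/p) = p^{\kappa_{2n,p}}\,\#(K_{2n}(\cO_F)/p)^-$ for every $F$, so averaging just multiplies by $p^{\kappa_{2n,p}}$. There is no genuine obstacle once Conjecture~\ref{C:average K-theory in residue classes conjecture} is granted: the argument is pure bookkeeping, the only substantive input being the already-cited equidistribution modulo $p^2$, and the only place to be careful is matching each residue of $n$ modulo $p-1$ with the correct box and density.
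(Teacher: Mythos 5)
Your derivation is correct and is essentially the paper's own: the authors likewise obtain Conjecture~\ref{C:average K-theory conjecture} from Conjecture~\ref{C:average K-theory in residue classes conjecture} by weighting each box's average with the densities $\frac{p}{2p+2}$, $\frac{1}{2p+2}$, and their complements (via Remark~\ref{R:splits} and the equidistribution cited in Example~\ref{Ex:upper left}), with the final factor $p^{\kappa_{2n,p}}$ coming from $(K_{2n}(\cO_F)/p)^+ \isom K_{2n}(\Z)/p$. Your sample computation for $n \equiv 0 \pmod{p-1}$, $F$ real reproduces the paper's displayed calculation exactly.
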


\begin{remark}
It is not quite clear that 
Conjectures \ref{C:class group heuristic}, \ref{C:K-theory conjecture},
and~\ref{C:K-theory conjecture for rank again}, 
imply Conjectures \ref{C:average class group heuristic}, 
\ref{C:average K-theory in residue classes conjecture}, 
and~\ref{C:average K-theory conjecture}, respectively.
For such implications, one would need to know
that the contribution to each average from the rare cases
of very large class groups or $K$-groups is negligible.
\end{remark}

\section{The average order of even \texorpdfstring{$K$}{K}-groups modulo 3}

In this section, we prove 
Conjecture~\ref{C:average class group heuristic} for $p=3$;
then Conjectures \ref{C:average K-theory in residue classes conjecture}
and~\ref{C:average K-theory conjecture} for $p=3$ follow too;
the latter becomes Theorem~\ref{T:average of K_3}.
For $p=3$, the table in Conjecture~\ref{C:average class group heuristic} 
to be verified simplifies to
\begin{center}
\begin{tabular}{c|c||c|c}
&& $d>0$ & $d<0$ \\ \hline \hline
$\textup{$n$ even}$ & $d \in \Q_3^{\times 2}$ & $10/9$ & $4/3$ \\ 
& $d \notin \Q_3^{\times 2}$ & $4/3$ & $2$ \\ \hline
$\textup{$n$ odd}$ & $-3d \in \Q_3^{\times 2}$ & $4/3$ & $10/9$ \\ 
&  $-3d \notin \Q_3^{\times 2}$ & $2$ & $4/3$\lefteqn{.} \\ 
\end{tabular}
\end{center}

We have $E=F(\zeta_3)$ 
and $G = \Gal(E/\Q) \isom (\Z/3\Z)^\times \times \{\pm 1\}$.
Let $H \colonequals \ker \chi$, which is generated by $(-1,(-1)^n)$.
Let $K \colonequals E^H$, which is $F$ if $n$ is even,
and $F'=\Q(\sqrt{-3d})$ if $n$ is odd.
For any $\F_3$-representation $V$ of $G$,
we have $V^H = V^G \directsum V^\chi$.
Apply this to $V = \Cl(\cO_E[1/3])/3$,
and use Lemma~\ref{L:Hochschild-Serre} twice to obtain
\[
	\Cl(\cO_K[1/3])/3 
	\isom \Cl(\Z[1/3])/3 \directsum \left( \Cl(\cO_E[1/3])/3 \right)^\chi
	\isom \left( \Cl(\cO_E[1/3])/3 \right)^\chi,
\]
since $\Cl(\Z[1/3])$ is a quotient of $\Cl(\Z)=0$.
It remains to show that for each box, 
the average order of $\Cl(\cO_K[1/3])/3$ 
is as given in the table above.

For fixed $n$,
as $d$ ranges over fundamental discriminants of fixed sign
in a fixed coset of $\Q_3^{\times 2}$ up to some bound,
the field $K$ ranges over quadratic fields 
with fundamental discriminant of fixed sign
in a fixed coset of $\Q_3^{\times 2}$ up to some bound
(the same sign and coset if $n$ is even, 
or sign and coset multiplied by $-3$ if $n$ is odd).
Thus it suffices to prove that the average order of $\Cl(\cO_K[1/3])/3$ 
for such $K$ is given by the table
\begin{center}
\begin{tabular}{c||cc}
& $d_K>0$ & $d_K<0$ \\ \hline \hline
$d_K \in \Q_3^{\times 2}$ & $10/9$ & $4/3$ \\ 
$d_K \notin \Q_3^{\times 2}$ & $4/3$ & $2$\lefteqn{.} \\
\end{tabular}
\end{center}

\begin{remark}
\label{R:cosets}
There are four cosets of $\Q_3^{\times 2}$ in $\Q_3^\times$.
Which coset contains a given fundamental discriminant $d_K$
is determined by whether $d_K$ is $1 \bmod 3$, $2 \bmod 3$,
$3 \bmod 9$, or $6 \bmod 9$.
In particular, $d_K \in \Q_3^{\times 2}$ 
if and only if $d_K \equiv 1 \pmod{3}$.
\end{remark}

\begin{remark}
The results in Sections \ref{S:class groups to cubic fields}
to~\ref{S:counting cubic fields} 
have been generalized by the second author 
to compute the average size of $\Cl(\cO_K[1/S])/3$ for an arbitrary
finite set of primes $S$ using essentially identical methods
\cite{Klagsbrun-preprint}. 
Similar methods have been used to compute the average size of
$\Cl(O_K[1/p])/3$ when $p$ splits in $K$ \cite{Wood2018}.
\end{remark}

\subsection{From class groups to cubic fields}
\label{S:class groups to cubic fields}

To compute the average order of $\Cl(\cO_K[1/3])/3$,
we adapt the strategy of Davenport and Heilbronn \cite{Davenport-Heilbronn1971},
which relies on results involving the following setup.

Let $L$ be a degree~$3$ extension of $\Q$ whose Galois closure $M$
has Galois group $S_3$.
Let $K$ be the quadratic resolvent of $L$,
that is, the degree~$2$ subfield $M^{A_3}$.

\begin{lemma}
\label{L:S_3 extension}
The following are equivalent:
\begin{enumerate}[\upshape (i)]
\item $M/K$ is unramified.
\item $d_L=d_K$.
\item $L/\Q$ is nowhere totally ramified.
\end{enumerate}
\end{lemma}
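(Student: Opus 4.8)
The plan is to prove the three implications (i)$\Rightarrow$(ii)$\Rightarrow$(iii)$\Rightarrow$(i), using the conductor--discriminant formula to control $d_L$ and the explicit ramification picture of an $S_3$-extension. Recall the basic structure: $M/\Q$ is an $S_3$-extension with $K=M^{A_3}$ the quadratic resolvent and $L=M^{\langle\sigma\rangle}$ for a transposition $\sigma$. For a rational prime $\ell$, the possible inertia subgroups $I_\ell \le S_3$ are $1$, a group of order $2$, $A_3$, or all of $S_3$; correspondingly $\ell$ is unramified in $M$, ramified only in $L$ (tamely, with $L_\ell$ having ramification pattern coming from a transposition), ramified only in $K$, or totally ramified in everything. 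The key numerical input is the conductor--discriminant formula applied to $M/K$ and to $L/\Q$: since $\operatorname{Ind}_{\langle\sigma\rangle}^{S_3} 1 = 1 \oplus \epsilon$ where $\epsilon$ is the quadratic character cutting out $K$, one gets $d_L = d_K \cdot \mathfrak{f}$ where $\mathfrak{f}$ is (up to the usual norm/square bookkeeping) the norm to $\Q$ of the relative discriminant $\mathfrak{d}_{M/K}$, suitably interpreted. So $d_L = d_K$ if and only if $M/K$ is unramified, which is exactly (i)$\Leftrightarrow$(ii); I would present this as the cleanest of the three equivalences and derive it first.

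For (ii)$\Leftrightarrow$(iii): I would argue prime by prime. A prime $\ell$ is totally ramified in $L/\Q$ precisely when $I_\ell = S_3$ (order-$3$ inertia $A_3$ gives $\ell$ unramified in $L$ since $\langle\sigma\rangle \cap A_3 = 1$; an order-$2$ inertia subgroup makes $\ell$ have ramification type $(e=2)$ or $(1,2)$ in $L$, not totally ramified). When $I_\ell = S_3$, the prime $\ell$ ramifies in $K$ (since $S_3 \not\le A_3$), and the unique prime of $K$ above $\ell$ then ramifies further in $M/K$ (the inertia in $M/K$ is $I_\ell \cap A_3 = A_3 \ne 1$), so $M/K$ is ramified at that prime; conversely, if $M/K$ ramifies at a prime $\mathfrak{q}$ over $\ell$, then $A_3 \cap I_\ell \ne 1$, forcing $A_3 \le I_\ell$, hence $I_\ell \in \{A_3, S_3\}$, and one must then check that the case $I_\ell = A_3$ cannot occur if we want genuine ramification of $M/K$ — but in that case $\ell$ is unramified in $K$ and $M/K$ is an everywhere-unramified $A_3$-extension locally, i.e. $\mathfrak{q}$ is actually unramified in $M/K$ after all, contradiction, so $I_\ell = S_3$ and $\ell$ is totally ramified in $L$. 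Thus $M/K$ ramified $\Leftrightarrow$ some $\ell$ is totally ramified in $L$, which is the contrapositive of (i)$\Leftrightarrow$(iii).

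I expect the main obstacle to be the careful handling of the prime $2$ (and more generally wild ramification at $2$ and $3$), where the tame-inertia dichotomy ``cyclic of order dividing $\#$inertia, index $\le 2$'' can fail and the conductor exponents in the conductor--discriminant formula are no longer simply $0$ or $1$. The honest way to deal with this is to work locally: after base change to $\Q_\ell$, the decomposition group $D_\ell$ and inertia $I_\ell$ sit in $S_3$, and one classifies the possibilities for $(D_\ell, I_\ell)$ directly; the identity $v_\ell(d_L) = v_\ell(d_K) + (\text{something nonnegative, vanishing iff } I_\ell \cap A_3 = 1 \text{ in an appropriate sense})$ can be read off from the local conductor--discriminant formula for each case, and the only subtle point is that when $I_\ell$ is the order-$2$ subgroup (possibly with wild part, but $2 \nmid |A_3|$ so $M/K$ stays unramified there regardless), $d_L$ and $d_K$ both pick up the same contribution. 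Once the local analysis is organized into the four cases for $I_\ell$, all three equivalences drop out simultaneously, so I would actually structure the final writeup as a single local case analysis rather than three separate implications, and merely note at the end that it yields (i)$\Leftrightarrow$(ii)$\Leftrightarrow$(iii).
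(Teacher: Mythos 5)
Your overall strategy (conductor--discriminant for (i)$\Leftrightarrow$(ii), then a prime-by-prime inertia analysis in $S_3$) is reasonable and close in spirit to the paper, which simply cites Hasse's Satz~3 for $d_L=N_{K/\Q}(\mathfrak{f})\,d_K$ and then Hasse's relation $d_L=(\text{square})\cdot d_K$ together with Cohen, Prop.~8.4.1, for the totally-ramified criterion. But as written your local case analysis contains a genuine error. If $I_\ell=A_3$, then the ramification index of the prime of $L=M^{\langle\sigma\rangle}$ below a prime $\mathfrak{P}$ of $M$ is $|I_\ell|/|I_\ell\cap\langle\sigma\rangle|=3/1=3$, so $\ell$ is \emph{totally ramified} in $L$ (while unramified in $K$), not unramified in $L$ as you claim; and the inertia of $M/K$ at $\mathfrak{P}$ is $I_\ell\cap A_3=A_3\ne 1$, so $M/K$ genuinely ramifies there. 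A concrete example is $L=\Q(\sqrt[3]{7})$, $K=\Q(\sqrt{-3})$: here $I_7\cong A_3$, $7$ is totally ramified in $L$, $M/K$ ramifies above $7$, and $d_L=-3\cdot 7^2\ne d_K$. Consequently your assertion ``$\ell$ is totally ramified in $L$ precisely when $I_\ell=S_3$'' is false, and your ``contradiction'' eliminating the $I_\ell=A_3$ case is a non sequitur: ramification in the tower $\Q\subset K\subset M$ can be concentrated entirely in the top step, so ``$\ell$ unramified in $K$'' does not force $M/K$ to be unramified above $\ell$. The correct dichotomy, which repairs the paragraph, is: $\ell$ is totally ramified in $L$ $\iff$ $A_3\subseteq I_\ell$ $\iff$ $M/K$ ramifies above $\ell$, and $A_3\cap I_\ell=1$ (i.e.\ $I_\ell$ trivial or of order $2$) gives $M/K$ unramified above $\ell$ and $\ell$ not totally ramified in $L$.

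A second, smaller error: $\operatorname{Ind}_{\langle\sigma\rangle}^{S_3}\mathbf{1}$ is $\mathbf{1}\oplus\rho$ with $\rho$ the two-dimensional irreducible, not $\mathbf{1}\oplus\epsilon$ (the dimensions do not even match), and the identity you wrote would yield $d_L=d_K$ unconditionally, trivializing the lemma. The relevant fact is $\rho\cong\operatorname{Ind}_{A_3}^{S_3}\psi$ for a nontrivial character $\psi$ of $\operatorname{Gal}(M/K)$, whence the conductor--discriminant formula gives $|d_L|=\mathfrak{f}(\rho)=|d_K|\cdot N_{K/\Q}(\mathfrak{f}(\psi))$, with $\mathfrak{f}(\psi)$ the conductor of $M/K$; this is exactly Hasse's Satz~3 as cited in the paper, and it does deliver (i)$\Leftrightarrow$(ii) as you intended. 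Your hedge about wild ramification at $2$ and $3$ is not where the difficulty lies (indeed $2\nmid |A_3|$ handles the order-$2$ inertia case as you say); the real issue is the misfiled $A_3$ case above.
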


\begin{proof}
Let $\mathfrak{f}$ be the conductor of the abelian extension $M/K$.
By \cite{Hasse1930}*{Satz~3}, $d_L = N_{K/\Q}(\mathfrak{f}) d_K$;
this proves (i)$\leftrightarrow$(ii).
Also, $d_L$ is an integer square times $d_K$ (see \cite{Hasse1930}*{(1)});
this proves (ii)$\leftrightarrow$(iii).
Finally, \cite{Cohen2000}*{Proposition~8.4.1} yields (ii)$\leftrightarrow$(iv).
\end{proof}

Throughout this section, we consider two cubic fields to be the same
if they are abstractly isomorphic (so conjugate cubic fields are
counted only as one).

\begin{theorem}[cf.~\cite{Hasse1930}*{Satz~7}]
\label{T:Delone-Faddeev}
Fix a quadratic field $K$.
Then the following are naturally in bijection:
\begin{enumerate}[\upshape (i)]
\item The set of index~$3$ subgroups of $\Cl(\cO_K)$.
\item The set of unramified $\Z/3\Z$-extensions $M$ of $K$.
\item The set of cubic fields $L$ with $d_L=d_K$.
\end{enumerate}
\end{theorem}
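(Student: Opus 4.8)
The plan is to establish the two bijections (i)$\leftrightarrow$(ii) and (ii)$\leftrightarrow$(iii) separately and then compose. For (i)$\leftrightarrow$(ii): an index-$3$ subgroup of $\Cl(\cO_K)$ is the same thing as a surjection $\Cl(\cO_K) \surjects \Z/3\Z$ up to automorphism of $\Z/3\Z$ (equivalently, a pair of such surjections, since $\Z/3\Z$ has two generators, but an index-$3$ \emph{subgroup} corresponds to the kernel, which is insensitive to that choice). By class field theory, $\Cl(\cO_K)$ is the Galois group of the Hilbert class field $H_K/K$, so surjections $\Cl(\cO_K) \surjects \Z/3\Z$ correspond to unramified (including at the archimedean places, since we use the narrow-vs-wide issue — here $\cO_K$ means the maximal order and $\Cl$ the ordinary class group, so ``unramified'' should be read as unramified at all finite places; I would remark that archimedean ramification does not occur for a $\Z/3\Z$-extension since $3$ is odd) cyclic degree-$3$ extensions $M/K$, again up to the choice of generator of $\Z/3\Z$. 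Quotienting both sides by that choice gives the bijection between index-$3$ subgroups and unramified $\Z/3\Z$-extensions $M$ of $K$.

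For (ii)$\leftrightarrow$(iii): given an unramified $\Z/3\Z$-extension $M/K$, I would first check $M/\Q$ is Galois. The point is that $K/\Q$ is Galois, so the nontrivial $\sigma \in \Gal(K/\Q)$ lifts to $\tilde\sigma \in \Gal(\overline\Q/\Q)$, and $\tilde\sigma(M)$ is again an unramified $\Z/3\Z$-extension of $K$; one must rule out $\tilde\sigma(M) \ne M$, which would force $\Gal(\overline K / K)$ to have at least two $\Z/3\Z$-quotients swapped by $\sigma$ — but in fact the cleaner route is: $M/\Q$ is Galois precisely when the subgroup $\Gal(H_K/M) \subseteq \Cl(\cO_K)$ is stable under $\Gal(K/\Q)$, and $\sigma$ acts on $\Cl(\cO_K)$ by $-1$ modulo the image of $\Cl(\Z) = 0$ combined with norm considerations; more honestly, I would invoke the standard fact (this is exactly the content of Hasse's Satz~7, cited) that every index-$3$ subgroup of $\Cl(\cO_K)$ is automatically $\sigma$-stable because $\sigma$ acts as inversion on $\Cl(\cO_K)/3$ — inversion fixes every subgroup. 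Hence $M/\Q$ is Galois. Then $\Gal(M/\Q)$ is an extension of $\Z/2\Z$ by $\Z/3\Z$, so it is $\Z/6\Z$ or $S_3$; since $M/K$ is unramified but $K/\Q$ is ramified (at $|d_K| > 1$), $M/\Q$ cannot be abelian with a subfield in which ramification ``spreads'', and one rules out $\Z/6\Z$ by checking that the degree-$3$ subextension of a $\Z/6\Z$-extension over $\Q$ would be cyclic and everywhere-unramified-or-tamely-controlled in a way incompatible with $d_L = d_K \ne $ a perfect square times a cube appropriately — cleanest is to note a $\Z/6\Z$-extension of $\Q$ contains a cyclic cubic field $L_0/\Q$, whose conductor-discriminant relation gives $d_{L_0}$ a square, whereas we will see below $d_L = d_K$ is a fundamental discriminant, not a square (for $|d_K| > 3$); for the excluded small cases $K = \Q(\sqrt{-3})$ etc. there simply are no such $M$ and all three sets are empty. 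So $\Gal(M/\Q) \isom S_3$, and the fixed field $L \colonequals M^{\langle s \rangle}$ of an order-$2$ element is a non-Galois cubic field with Galois closure $M$ and quadratic resolvent $M^{A_3} = K$; by Lemma~\ref{L:S_3 extension}, $M/K$ unramified is equivalent to $d_L = d_K$. Conversely, starting from a cubic field $L$ with $d_L = d_K$, its Galois closure $M$ has group $S_3$ (it is not cyclic since $d_L = d_K$ is not a square) with quadratic resolvent $K$, and $M/K$ is unramified by Lemma~\ref{L:S_3 extension}; the three conjugate choices of $L$ inside $M$ give isomorphic cubic fields, matching the convention that $L$ is counted up to isomorphism, so this is a well-defined inverse.

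Composing, index-$3$ subgroups of $\Cl(\cO_K)$ correspond to cubic fields $L$ with $d_L = d_K$, and all the bijections are natural (functorial in the obvious sense — compatible with the Galois action and with field-theoretic operations). The main obstacle I anticipate is the bookkeeping around Galois-ness of $M/\Q$ and the exclusion of the cyclic ($\Z/6\Z$) case, i.e.\ making the ``$\sigma$ acts by $-1$ on $\Cl$'' argument fully rigorous rather than hand-wavy, and handling the finitely many exceptional small-discriminant $K$ (such as $\Q(\sqrt{-3})$) where every set in the statement is empty; everything else is a direct translation through class field theory together with the already-established Lemma~\ref{L:S_3 extension}.
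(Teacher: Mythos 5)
Your overall route is the paper's: class field theory gives (i)$\leftrightarrow$(ii), and (ii)$\leftrightarrow$(iii) is the correspondence sending $M$ to a cubic subfield and $L$ to its Galois closure, with Lemma~\ref{L:S_3 extension} translating ``$M/K$ unramified'' into ``$d_L=d_K$''. The one step where you diverge is the only step with a problem: your exclusion of the $\Z/6\Z$ case is circular as written. You argue that a cyclic sextic $M/\Q$ would contain a cyclic cubic $L_0$ with square discriminant, ``whereas $d_L=d_K$ is not a square''; but the equality $d_L=d_K$ comes from Lemma~\ref{L:S_3 extension}, whose hypotheses (Galois closure with group $S_3$ and quadratic resolvent $K$) are exactly what you are trying to establish at this point, and in the hypothetical abelian case the cubic subfield $L_0$ is a different field about whose discriminant that lemma says nothing. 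So this step, as stated, does not go through.

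The fix is already in your hands, and it is precisely the paper's one-sentence argument: the nontrivial element $\sigma$ of $\Gal(K/\Q)$ acts as $-1$ on $\Cl(\cO_K)$ (because $\mathfrak{a}\,\sigma(\mathfrak{a})=N_{K/\Q}(\mathfrak{a})\cO_K$ is principal), so under the Artin isomorphism conjugation by any lift of $\sigma$ acts as inversion on $\Gal(M/K)\isom \Z/3\Z$. Inversion is nontrivial on $\Z/3\Z$, hence $\Gal(M/\Q)$ is a non-abelian extension of $\Z/2\Z$ by $\Z/3\Z$, i.e.\ $S_3$; this simultaneously yields Galois-ness (which you correctly obtained from $\sigma$-stability of the index-$3$ subgroup) and rules out $\Z/6\Z$, with no discriminant computation. (An alternative repair: $M/K$ unramified forces every ramification index of $M/\Q$ to divide $2$, while a ramified prime in a cyclic cubic $L_0/\Q$ has index $3$, so $L_0/\Q$ would be everywhere unramified, contradicting Minkowski.) Note also that a fundamental discriminant of a quadratic field is never a perfect square, so your hedging about small cases such as $\Q(\sqrt{-3})$ is unnecessary, though harmless. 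With the $S_3$ step repaired, the remainder of your argument --- including the observation that the three conjugate cubic subfields of $M$ give a single isomorphism class, and the use of Lemma~\ref{L:S_3 extension} in both directions --- coincides with the paper's proof.
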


\begin{proof}
Class field theory gives (i)$\leftrightarrow$(ii).
The nontrivial element of $\Gal(K/\Q)$ acts as $-1$ on $\Cl(\cO_K)$,
so each $M$ in (ii) is an $S_3$-extension of $\Q$.
The map (ii)$\to$(iii) sends $M$ to one of its cubic subfields $L$.
The map (iii)$\to$(ii) sends $L$ to its Galois closure $M$.
That these are bijections follows from 
Lemma~\ref{L:S_3 extension}(i)$\leftrightarrow$(ii).
\end{proof}

We are interested in $\Cl(\cO_K[1/3])$ instead of $\Cl(\cO_K)$,
so we need the following variant.

\begin{corollary}
\label{C:generalized Delone-Faddeev}
Fix a quadratic field $K$.
Then the following are naturally in bijection:
\begin{enumerate}[\upshape (i)]
\item The set of index~$3$ subgroups of $\Cl(\cO_K)[1/3]$.
\item The set of unramified $\Z/3\Z$-extensions $M$ of $K$
such that the primes above $3$ in $K$ split completely in $M/K$.
\item The set of cubic fields $L$ with $d_L=d_K$
such that if $d_K \in \Q_3^{\times 2}$ then $3$ splits completely in $L/\Q$.
\end{enumerate}
\end{corollary}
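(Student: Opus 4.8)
The plan is to obtain Corollary~\ref{C:generalized Delone-Faddeev} by refining Theorem~\ref{T:Delone-Faddeev}: each of its three sets contains a distinguished subset cut out by a condition at $3$, and the task is to match these subsets. The bijection (i)$\leftrightarrow$(ii) is the easy half. Since $\cO_K[1/3]$ is obtained from $\cO_K$ by inverting the primes above $3$, the class group $\Cl(\cO_K[1/3])$ is the quotient of $\Cl(\cO_K)$ by the subgroup generated by the classes of those primes; hence its index-$3$ subgroups correspond to the index-$3$ subgroups of $\Cl(\cO_K)$ that contain all of those classes. Under the Artin reciprocity isomorphism of Theorem~\ref{T:Delone-Faddeev}(i)$\leftrightarrow$(ii), a prime $\mathfrak p$ of $\cO_K$ above $3$ (automatically unramified in $M$, as $M/K$ is unramified) maps to $\Frob_{\mathfrak p} \in \Gal(M/K) \isom \Z/3\Z$, which vanishes exactly when $\mathfrak p$ splits completely in $M/K$. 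So an index-$3$ subgroup contains the classes of the primes above $3$ if and only if all of them split completely in $M$, which gives (i)$\leftrightarrow$(ii).

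For (ii)$\leftrightarrow$(iii) I would restrict the bijection of Theorem~\ref{T:Delone-Faddeev}(ii)$\leftrightarrow$(iii) and prove: if $M$ corresponds to $L$, then the primes above $3$ in $K$ split completely in $M/K$ if and only if, in the case $d_K \in \Q_3^{\times 2}$, the prime $3$ splits completely in $L/\Q$. This is a local computation at $3$ inside $\Gal(M/\Q) \isom S_3$. Fix a prime $\mathfrak P$ of $M$ above $3$ with decomposition group $D$ and inertia group $I$. Since $M/K$ is unramified (Lemma~\ref{L:S_3 extension}, using $d_L = d_K$), the group $I$ injects into $\Gal(K/\Q) = S_3/A_3$, so $|I| \le 2$; also $d_L = d_K$ forces $L/\Q$ to be not totally ramified at $3$ (Lemma~\ref{L:S_3 extension} again). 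By Remark~\ref{R:cosets} the behavior of $3$ in $K$ is read off from the coset of $d_K$ modulo squares in $\Q_3^\times$, and I would split into the corresponding three cases, recalling that, since $L = M^{\langle\sigma\rangle}$ for a transposition $\sigma$ with trivial core, $3$ splits completely in $L$ if and only if $D = 1$.

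If $d_K \in \Q_3^{\times 2}$, then $3$ splits in $K$, so $I = 1$ and $D \subseteq A_3$, forcing $D \in \{1, A_3\}$; the primes above $3$ in $K$ split completely in $M/K$ iff $D \cap A_3 = 1$ iff $D = 1$ iff $3$ splits completely in $L$ — exactly the stated condition. If $3$ is inert in $K$, then $\Frob_3 \in S_3 \setminus A_3$ is a transposition, so the Frobenius of the prime of $K$ above $3$ in $\Gal(M/K)$ is $\Frob_3^2 = 1$ and that prime automatically splits completely in $M/K$, matching the vacuous condition on $L$ (here $d_K \notin \Q_3^{\times 2}$). If $3$ is ramified in $K$, i.e.\ $3 \mid d_K$, then $|I| = 2$ and $D/I$ is cyclic, so $|D| \in \{2, 6\}$; I would exclude $D = S_3$ by a ramification-and-residue-degree count for the prime(s) of $L = M^{\langle\sigma\rangle}$ above $3$ via the double-coset description, using that $L/\Q$ is not totally ramified at $3$: the case $D = S_3$ forces a non-integral residue degree or a totally ramified prime in $L$, a contradiction. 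Hence $D = I$ has order $2$, so $D \cap A_3 = 1$ and the prime above $3$ again splits completely in $M/K$, again matching the vacuous condition.

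The step I expect to be the real work is this last case — ruling out $D = S_3$ when $3 \mid d_K$ — together with keeping the $S_3$ decomposition-group and double-coset bookkeeping straight and checking that the resulting correspondence is a genuine bijection, i.e.\ compatible in both directions. Everything else is a direct combination of Theorem~\ref{T:Delone-Faddeev} with class field theory.
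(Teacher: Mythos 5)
Your reduction and your proof of (i)$\leftrightarrow$(ii) are exactly the paper's: $\Cl(\cO_K[1/3])$ is the quotient of $\Cl(\cO_K)$ by the classes of the primes $\p\mid 3$, and under the Artin isomorphism $\Cl(\cO_K)/H \isom \Gal(M/K)$ the class $[\p]$ goes to $\Frob_\p$, which is trivial iff $\p$ splits in $M/K$. For (ii)$\leftrightarrow$(iii), however, you take a genuinely different and heavier route. The paper never touches decomposition or inertia groups in $S_3$: it observes that if $3$ is inert or ramified in $K$ then the class of the prime above $3$ has order dividing $2$ in $\Cl(\cO_K)$ (it is $(3)$, or satisfies $\p^2=(3)$), hence lies in \emph{every} index-$3$ subgroup, so the splitting condition in (ii) is automatic and matches the vacuous condition in (iii); while if $3$ splits in $K$, the primes above $3$ split in $M/K$ iff $3$ splits completely in $M/\Q$ iff $3$ splits completely in $L/\Q$ (Galois closure). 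That one-line order argument replaces your entire case analysis. Your local analysis does buy an explicit picture of the splitting of $3$ in $M$ and $L$, but at the cost of the bookkeeping you yourself flag.

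Concerning that flagged step: your treatment of the ramified case is both imprecise and unnecessary. With $|I|=2$ and $D=S_3$, the double-coset computation does not produce ``a non-integral residue degree or a totally ramified prime'' in both branches: if the transposition generating $I$ equals the one cutting out $L$, one gets $e=1$, $f=3$, i.e.\ $3$ \emph{unramified} in $L$, and the contradiction is with $3\mid d_L=d_K$, not with ``not totally ramified.'' The clean way to kill $D=S_3$ inside your framework is simply that inertia is normal in the decomposition group and no order-$2$ subgroup of $S_3$ is normal; or avoid the issue entirely by the paper's argument that $[\p]$ has order dividing $2$ in $\Cl(\cO_K)$, so the condition on the index-$3$ subgroup is empty. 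With that repair (and the observation, needed in the split case, that the two primes of $K$ above $3$ have conjugate decomposition groups, so one splits in $M/K$ iff both do), your argument is correct.
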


\begin{proof}
The group $\Cl(\cO_K)[1/3]$ is the quotient of $\Cl(\cO_K)$
by the group generated by the classes of the primes $\p | 3$ in $K$.
Thus we need to restrict the bijections in Theorem~\ref{T:Delone-Faddeev}
to the index~$3$ subgroups $H$ containing these classes.
Because the class field theory isomorphism $\Cl(\cO_K)/H \isom \Gal(M/K)$ 
sends $[\p]$ to $\Frob_\p$,
which is trivial if and only if $\p$ splits in $M/K$,
we obtain (i)$\leftrightarrow$(ii).
If $3$ is inert or ramified in $K/\Q$, then the prime above $3$
is of order dividing $2$ in $\Cl(\cO_K)$, 
so to require it to be in the index~$3$ subgroup is no condition.
If $3$ splits in $K/\Q$ (that is, $d_K \in \Q_3^{\times 2}$),
then the primes above $3$ in $K$ split in $M/K$
if and only if $3$ splits completely in $M/\Q$,
which is if and only if $3$ splits completely in $L/\Q$.
\end{proof}

\begin{corollary}
\label{C:class groups to cubic fields}
Let $K$ be a quadratic field.
\begin{enumerate}[\upshape (a)]
\item
If $d_K \in \Q_3^{\times 2}$, then
\[
	\# \Cl(\cO_K[1/3])/3 = 2 \, \#\{\textup{cubic fields $L$ with $d_L=d_K$ in which $3$ splits}\} + 1.
\]
\item
If $d_K \notin \Q_3^{\times 2}$, then
\[
	\# \Cl(\cO_K[1/3])/3 = 2 \, \#\{\textup{cubic fields $L$ with $d_L=d_K$}\} + 1.
\]
\end{enumerate}
\end{corollary}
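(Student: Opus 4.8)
The plan is to count index-$3$ subgroups on the two sides of the bijection in Corollary~\ref{C:generalized Delone-Faddeev} and then convert the count on the class-group side into the order of $\Cl(\cO_K[1/3])/3$.

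First I would record an elementary fact about a finite abelian group $A$: since any subgroup of index~$3$ contains $3A$, the index-$3$ subgroups of $A$ correspond bijectively to the index-$3$ subgroups of $A/3A$; and if $A/3A \isom \F_3^r$, these are precisely the codimension-$1$ subspaces, of which there are $(3^r-1)/(3-1) = (3^r-1)/2$. Hence, writing $N$ for the number of index-$3$ subgroups of $A$, we have $\#(A/3A) = 3^r = 2N+1$.

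I would then apply this with $A = \Cl(\cO_K[1/3])$. In the notation of Corollary~\ref{C:generalized Delone-Faddeev} this is the group called $\Cl(\cO_K)[1/3]$, the quotient of $\Cl(\cO_K)$ by the subgroup generated by the classes of the primes above~$3$, so that $\#\Cl(\cO_K[1/3])/3 = 2N+1$ where $N$ is the number of its index-$3$ subgroups. By the bijection (i)$\leftrightarrow$(iii) of Corollary~\ref{C:generalized Delone-Faddeev}, $N$ equals the number of cubic fields $L$ with $d_L = d_K$, where in addition $3$ is required to split completely in $L/\Q$ precisely when $d_K \in \Q_3^{\times 2}$. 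Substituting this value of $N$ into $\#\Cl(\cO_K[1/3])/3 = 2N+1$ in the two cases $d_K \in \Q_3^{\times 2}$ and $d_K \notin \Q_3^{\times 2}$ yields statements (a) and (b).

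There is no serious obstacle here; the argument is essentially bookkeeping once Corollary~\ref{C:generalized Delone-Faddeev} is in hand. The only points deserving a moment's care are the identification of the group $\Cl(\cO_K)[1/3]$ of Corollary~\ref{C:generalized Delone-Faddeev} with $\Cl(\cO_K[1/3])$, and — a point already settled inside the proof of Theorem~\ref{T:Delone-Faddeev} — the fact that, since $K$ is a genuine quadratic field and hence $d_K$ is not a perfect square, every cubic field $L$ with $d_L = d_K$ is automatically non-Galois, so the bijection really does enumerate all such $L$.
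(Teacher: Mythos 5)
Your proof is correct and matches the paper's argument: the paper likewise uses the identity $\#V = 2\,\#\{\text{index-}3\text{ subgroups of }V\}+1$ for an elementary abelian $3$-group (applied to $V = \Cl(\cO_K[1/3])/3$, equivalent to your passage from $A$ to $A/3A$) together with the bijection (i)$\leftrightarrow$(iii) of Corollary~\ref{C:generalized Delone-Faddeev}. Your added remarks (identifying the group in that corollary with $\Cl(\cO_K[1/3])$, and noting $d_K$ is not a square) are harmless bookkeeping already implicit in the paper.
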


\begin{proof}
For an elementary abelian $3$-group $V$,
\[
	\#V = 2 \, \#\{\textup{index~$3$ subgroups of $V$}\} + 1.
\]
Take $V = \Cl(\cO_K[1/3])/3$,
and apply
Corollary~\ref{C:generalized Delone-Faddeev}(i)$\leftrightarrow$(iii).
\end{proof}

\subsection{Counting quadratic fields}
\label{S:counting quadratic fields}

Fix a sign and a coset of $\Q_3^{\times 2}$ in $\Q_3^\times$;
by Remark~\ref{R:cosets}, each of the four cosets is defined by 
a congruence condition mod~$3$ or mod~$9$.
Let $\scriptD$ be the set of fundamental discriminants
having this sign and lying in this coset.
For $X>0$, define 
$\scriptD_{<X} \colonequals \{d \in \scriptD : |d| < X\}$.

To compute the average of the appropriate left hand side
in Corollary~\ref{C:class groups to cubic fields}
as $K$ ranges over quadratic fields with $d_K \in \scriptD$,
we compute the average number of cubic fields
appearing in the corresponding right hand side.
That is, we need the limit as $X \to \infty$ of
\begin{equation}
\label{E:quadratics over cubics}
	\frac{ \sum_{K: d_K \in \scriptD_{<X}} \#\{\textup{cubic fields $L$ with $d_L=d_K$ such that if $d_K \in \Q_3^{\times 2}$ then $3$ splits in $L$}\}}
	{\#\{\textup{quadratic fields $K$ such that $d_K \in \scriptD_{<X}$}\}}.
\end{equation}
We first compute the denominator.

\begin{proposition}
\label{P:counting quadratic fields}
The number of quadratic fields $K$ satisfying $|d_K|<X$
and prescribed sign and $3$-adic congruence conditions 
is $\alpha_2 X/\zeta(2) + o(X)$,
where $\alpha_2$ is given by the following table:
\begin{center}
\begin{tabular}{c||c|c}
& $d_K>0$ & $d_K<0$ \\ \hline \hline
$\textup{$d_K \equiv 1\pmod{3}$ }$ & $3/16$ & $3/16$ \\ 
$\textup{$d_K \equiv 2\pmod{3}$ }$ & $3/16$ & $3/16$ \\ 
$\textup{$d_K \equiv 3\pmod{9}$ }$ & $1/16$ & $1/16$ \\ 
$\textup{$d_K \equiv 6\pmod{9}$ }$ & $1/16$ & $1/16$\lefteqn{.} \\ 
\end{tabular}
\end{center}
\end{proposition}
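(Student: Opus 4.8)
The plan is to count fundamental discriminants $d_K$ with $|d_K|<X$ lying in a fixed sign class and a fixed coset of $\Q_3^{\times 2}$, and to match this with the asymptotic $\alpha_2 X/\zeta(2)+o(X)$. The starting point is the classical fact that the number of fundamental discriminants of absolute value less than $X$ is $\sim \tfrac{1}{\zeta(2)}X$ when counted with the natural density one expects from writing a fundamental discriminant uniquely as a squarefree-type integer. More precisely, I would recall (e.g.\ from \cite{Prachar1958}*{(1)}, already cited in Example~\ref{Ex:upper left}) that fundamental discriminants become equidistributed in residue classes modulo any fixed modulus $M$ coprime to the ``bad'' primes, once one fixes behavior at the primes dividing $M$; here $M=9$, and the relevant constraint is the local condition at $3$.

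First I would separate the count by sign: among all fundamental discriminants with $|d_K|<X$, asymptotically half are positive and half are negative, so each sign contributes $\tfrac{1}{2\zeta(2)}X+o(X)$. This already explains why the two columns $d_K>0$ and $d_K<0$ in the table are identical. Next I would split by the $3$-adic coset. A fundamental discriminant is either coprime to $3$ (the ``$3$ unramified'' case, corresponding to $d_K\equiv 1$ or $2\pmod 3$) or exactly divisible by $3$ (the ``$3$ ramified'' case, corresponding to $d_K\equiv 3$ or $6\pmod 9$, by Remark~\ref{R:cosets}). Writing a fundamental discriminant in the unramified-at-$3$ case as a product over all primes, the local factor at $3$ is the probability that a random squarefree integer is coprime to $3$ and lies in a prescribed nonzero class mod $3$; of the two nonzero classes mod $3$ this splits the ``coprime to $3$'' mass equally. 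In the ramified-at-$3$ case one forces $3\| d_K$, which removes a factor corresponding to the $3$-part, and then the cofactor is coprime to $3$ and its class mod $3$ (equivalently the class of $d_K/3$, hence $d_K\bmod 9$ among $3$ and $6$) again splits that mass equally.

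The cleanest way to organize the arithmetic is via an Euler product / Tauberian argument: the Dirichlet series $\sum_{d_K} |d_K|^{-s}$ over fundamental discriminants of a fixed sign factors as $\tfrac12\prod_\ell (\text{local factor at }\ell)$, where for $\ell\neq 3$ the local factor is $(1+\ell^{-s}+\ell^{-2s})$ adjusted to pick out squarefree-type contributions, giving in aggregate $\zeta(s)/\zeta(2s)$ up to the factor at $3$, and at $3$ one inserts the local factor dictated by the chosen coset. Evaluating the residue at $s=1$ then isolates $1/\zeta(2)$ times the chosen $3$-local density, and a standard Tauberian theorem (Ikehara, or a direct contour shift since the only pole on $\Re s=1$ is at $s=1$) converts this to the asymptotic with error $o(X)$. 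Tracking the $3$-local factor through this computation yields exactly $\tfrac12\cdot\tfrac12\cdot(1-\tfrac13)\cdot(\text{something})$; I expect the bookkeeping to produce $3/16$ for each of the two unramified classes and $1/16$ for each of the two ramified classes, with the $3/16:1/16$ ratio reflecting that the ``$3$ ramified'' condition costs an extra factor of roughly $3$ in density. The main obstacle is purely the careful computation of this $3$-local Euler factor and confirming the normalizations so that the four entries sum correctly against the total density $1/(2\zeta(2))$ per sign; everything else is standard analytic number theory that can be cited.
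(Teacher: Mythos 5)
Your numbers and overall structure are right, and they match the paper's table, but your route is different: the paper disposes of this proposition with a one-line ``elementary squarefree sieve,'' i.e.\ decompose fundamental discriminants into the two $2$-adic types ($d$ squarefree with $d\equiv 1\pmod 4$, or $d=4m$ with $m$ squarefree and $m\equiv 2,3\pmod 4$), impose the sign and the mod-$9$ condition, and count squarefree integers in fixed residue classes modulo $36$ by inclusion--exclusion over square divisors; this gives each entry directly as $\tfrac12\cdot\tfrac{X}{\zeta(2)}$ times the relative $3$-adic density ($3/8$ for each class coprime to $3$, $1/8$ for each of $3,6\bmod 9$), with a power-saving error rather than just $o(X)$. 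Your Dirichlet-series/Tauberian argument reaches the same constants, but two points in your sketch need tightening: the sign of $d_K$ and the residue conditions mod $4$ and mod $9$ are not multiplicative, so the series $\sum|d_K|^{-s}$ over one sign class does not literally factor as $\tfrac12\prod_\ell(\text{local factor})$; you must first split into the finitely many classes above (or detect the congruences with Dirichlet characters mod $36$) and only then compare with $\zeta(s)/\zeta(2s)$, and the squarefree local factor is $1+\ell^{-s}$, not $1+\ell^{-s}+\ell^{-2s}$. Also, the final bookkeeping that you state as ``I expect \dots $3/16$ and $1/16$'' is exactly the content of the proposition, so it should be carried out explicitly: per sign the total density is $\tfrac1{2\zeta(2)}$, the coprime-to-$3$ mass $3/4$ splits equally between $1$ and $2\bmod 3$ giving $3/8$ each, and the mass $1/4$ with $3$ exactly dividing $d_K$ splits equally between $3$ and $6\bmod 9$ giving $1/8$ each, which yields the four entries $3/16,3/16,1/16,1/16$. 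With those repairs your argument is a valid, if heavier, alternative to the sieve; what the sieve buys is brevity and an explicit error term, while the Tauberian framing generalizes more readily to weighted counts.
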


\begin{proof}
Use an elementary squarefree sieve.
\end{proof}

\begin{remark}
Even though many entries in the table of 
Proposition~\ref{P:counting quadratic fields} coincide,
it is stronger to give the asymptotics for the individual field families
without merging them, and we need the stronger results.
\end{remark}

\subsection{Counting cubic fields}
\label{S:counting cubic fields}

By Lemma~\ref{L:S_3 extension}(ii)$\leftrightarrow$(iii), 
the numerator in~\eqref{E:quadratics over cubics}
equals the number of nowhere totally ramified cubic fields $L$ 
with $d_L \in \scriptD_{<X}$ 
such that if $d_L \in \Q_3^{\times 2}$ then $3$ splits completely in $L$.
To compute this number,
we follow the Davenport--Heilbronn approach,
in the form of a refinement due to
Bhargava, Shankar, and Tsimerman \cite{Bhargava-Shankar-Tsimerman2013}.

For every prime $p$, let $\widehat{\Sigma}_p$ 
be the set of maximal cubic $\Z_p$-orders
that are not totally ramified,
up to isomorphism.
For $R \in \widehat{\Sigma}_p$,
let $\Disc_p(R)$ be the power of $p$
generating the discriminant ideal of $R$.

\begin{theorem}\label{thm:BST}
For each prime $p$ $($including~$2$$)$, let $\Sigma_p \subseteq \widehat{\Sigma}_p$.
Suppose that $\Sigma_p = \widehat{\Sigma}_p$ for all $p$ outside
a finite set $\cP$.
Define
\[
	c_p \colonequals \frac{p}{p+1} 
	\sum_{R \in \Sigma_p} \frac{1}{\Disc_p(R) \; \lvert\Aut R\rvert}.
\]
\begin{enumerate}[\upshape (a)]
\item The number of nowhere totally ramified totally real cubic fields $L$ \textup{(}up to isomorphism\textup{)} such that $|d_L| < X$ and $\cO_L \tensor \Z_p \in \Sigma_p$ 
for all $p \in \cP$ is 
$\frac{1}{12\zeta(2)}\left (\prod_{p\in \cP} c_p \right) X + o(X)$.
\item The number of nowhere totally ramified complex cubic fields $L$ \textup{(}up to isomorphism\textup{)} such that $|d_L| < X$ and $\cO_L \tensor \Z_p \in \Sigma_p$ for all $p \in \cP$ is $\frac{1}{4\zeta(2)}\left (\prod_{p\in \cP} c_p \right) X + o(X)$.
\end{enumerate}
\end{theorem}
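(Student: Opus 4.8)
The plan is to follow the Delone--Faddeev parametrization of cubic rings by binary cubic forms and the Davenport--Heilbronn geometry-of-numbers count, in the refined form due to Bhargava and to Bhargava--Shankar--Tsimerman; only the leading term (with error $o(X)$) is needed, but with arbitrary local conditions. First I would reduce to a problem about forms. Recall that $\GL_2(\Z)$-equivalence classes of integral binary cubic forms $f(x,y)=ax^3+bx^2y+cxy^2+dy^3$ correspond bijectively to isomorphism classes of cubic rings $\cO_f$, with $\Disc(f)=\Disc(\cO_f)$ and $\lvert\Aut(f)\rvert=\lvert\Aut(\cO_f)\rvert$; moreover $\cO_f$ is the ring of integers of a cubic field exactly when $f$ is irreducible over $\Q$ and $\cO_f\tensor\Z_p$ is a maximal cubic $\Z_p$-order for every $p$, the latter being a $\GL_2(\Z_p)$-invariant condition on $f$. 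Under this dictionary, ``$\cO_L\tensor\Z_p$ not totally ramified'' and ``$\cO_L\tensor\Z_p\in\Sigma_p$'' become $\GL_2(\Z_p)$-invariant local conditions, and $\Disc(f)>0$ (resp.\ $<0$) corresponds to $L$ totally real (resp.\ complex). So the theorem becomes: count, up to $\GL_2(\Z)$, the irreducible integral binary cubic forms $f$ of a prescribed sign of discriminant with $0<\lvert\Disc(f)\rvert<X$ such that $\cO_f\tensor\Z_p\in\widehat\Sigma_p$ for $p\notin\cP$ and $\cO_f\tensor\Z_p\in\Sigma_p$ for $p\in\cP$.

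Next I would establish the main term subject only to the \emph{finitely many} conditions at primes of $\cP$: these amount to a congruence condition on $(a,b,c,d)$ modulo some integer $N$, and I fix such a class. Counting $\GL_2(\Z)$-orbits of irreducible forms in this class with $0<\lvert\Disc\rvert<X$ is done by the geometry of numbers: choose a fundamental domain for $\GL_2(\Z)$ acting on real binary cubic forms, average the lattice-point count over a fixed compact region of $\GL_2(\R)$ (Bhargava's averaging trick) so that the only cuspidal region consists of reducible forms, apply Davenport's lattice-point lemma in the bounded part, and discard the reducible forms (which do not give cubic fields) and the forms with nontrivial stabilizer, whose joint contribution is $o(X)$ by the Davenport--Heilbronn/Bhargava cusp analysis. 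This yields a main term $c_\infty\,\mu_N\,X+o(X)$, where $c_\infty$ depends only on the sign of the discriminant --- producing the archimedean constants behind the factors $\tfrac{1}{12}$ and $\tfrac14$ and the $3{:}1$ ratio between complex and totally real cubic fields --- and $\mu_N$ is the density of the fixed congruence class among integral binary cubic forms.

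The essential point --- and the main obstacle --- is to impose the remaining infinitely many conditions ``$\cO_f\tensor\Z_p$ maximal and not totally ramified'' for all $p\notin\cP$ at once. One needs a uniform tail estimate: the number of forms with $0<\lvert\Disc\rvert<X$ that are non-maximal \emph{or} totally ramified at some prime $p>Y$ is at most $\varepsilon(Y)\,X+o_Y(X)$ with $\varepsilon(Y)\to 0$. Since a non-maximal or totally ramified cubic $\Z_p$-order has $\Disc_p$ divisible by $p^2$, the bad condition at $p$ has density $O(p^{-2})$, so the tail $\sum_{p>Y}O(p^{-2})$ is summable; such a uniform estimate is precisely what the methods of \cite{Bhargava-Shankar-Tsimerman2013} provide, refining Bhargava's original tail bounds. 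Granting it, one lets $Y\to\infty$ and replaces the truncated product of local densities by the full Euler product.

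Finally I would identify the Euler product. For $p\notin\cP$, the standard local mass formula for cubic $\Z_p$-algebras (Bhargava, Datskovsky--Wright) shows that the density of ``$\cO_f\tensor\Z_p\in\widehat\Sigma_p$'' among integral binary cubic forms is exactly the quantity $\tfrac{p}{p+1}\sum_{R\in\widehat\Sigma_p}\tfrac{1}{\Disc_p(R)\,\lvert\Aut R\rvert}$ appearing in the definition of $c_p$; enumerating the non-totally-ramified maximal cubic $\Z_p$-orders (the split order, the partially unramified orders, and $\Z_p$ times a ramified quadratic order, with $p=2$ and $p=3$ checked by hand) gives the value $1$. Hence $\prod_p c_p=\prod_{p\in\cP}c_p$, and the product of the remaining local densities together with $c_\infty$ collapses to $\tfrac{1}{12\zeta(2)}$ in the totally real case and $\tfrac{1}{4\zeta(2)}$ in the complex case; this normalization is pinned down by taking $\cP=\varnothing$ and comparing with the count of nowhere totally ramified cubic fields, which by Theorem~\ref{T:Delone-Faddeev} equals $\tfrac12\bigl(\#\Cl(\cO_K)_3-1\bigr)$ summed over quadratic fields $K$ and is thus governed by the Davenport--Heilbronn value of $\Average\#\Cl(\cO_K)_3$ \cite{Davenport-Heilbronn1971}. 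Assembling the three steps gives the asserted asymptotics.
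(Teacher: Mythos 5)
Your proposal is correct in outline, but it takes a very different (and much longer) route than the paper: the paper's proof of Theorem~\ref{thm:BST} is a two-line deduction from \cite{Bhargava-Shankar-Tsimerman2013}*{Theorem~8} --- one rewrites $\frac{p-1}{p}\sum_{R\in\Sigma_p}\frac{1}{\Disc_p(R)\,\lvert\Aut R\rvert}$ as $(1-p^{-2})\,c_p$, checks by enumerating $\widehat{\Sigma}_p$ that $c_p=1$ for every $p\notin\cP$ (so the infinite product collapses to $\prod_{p\in\cP}c_p$ together with the $\zeta$-factors), and substitutes. What you propose is essentially to re-prove that quoted theorem from scratch: Delone--Faddeev, Bhargava's averaging over a fundamental domain, Davenport's lattice-point lemma, the cusp/reducible-form analysis, and a uniform tail estimate permitting infinitely many maximality and non-total-ramification conditions; you correctly identify the uniform tail bound as the crux and attribute it to \cite{Bhargava-Shankar-Tsimerman2013}. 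Since you must invoke that machinery anyway, your route buys a self-contained exposition of where the count comes from, but no logical savings; the paper's route buys brevity by citing the packaged statement and reducing the theorem to the purely local verification $c_p=1$.

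One caution on a genuine slip: your claim that the $p$-adic density of forms with $\cO_f\tensor\Z_p\in\widehat{\Sigma}_p$ equals $\frac{p}{p+1}\sum_{R\in\widehat{\Sigma}_p}\frac{1}{\Disc_p(R)\,\lvert\Aut R\rvert}$ (hence equals $1$) cannot be literally right, since that set of forms has density strictly less than $1$; the mass-formula density of $\{f:\cO_f\tensor\Z_p\cong R\}$ carries an extra factor $(1-p^{-1})(1-p^{-2})$, and it is the product of those factors over all $p$, combined with the archimedean constant, that produces the $\frac{1}{12\zeta(2)}$ and $\frac{1}{4\zeta(2)}$. Your final normalization step --- taking $\cP=\varnothing$ and pinning the constant against the Davenport--Heilbronn average of $\#\Cl(\cO_K)_3$ \cite{Davenport-Heilbronn1971} via Lemma~\ref{L:S_3 extension} and Theorem~\ref{T:Delone-Faddeev} --- does repair this, because it only needs the answer to have the Euler-product shape with local factor proportional to $\sum_{R\in\Sigma_p}\frac{1}{\Disc_p(R)\,\lvert\Aut R\rvert}$; so the argument survives, but the density statement should be corrected rather than asserted as the mass formula.
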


\begin{proof}
The definition of $c_p$ yields
\[
	\frac{p-1}{p}
	\sum_{R \in \Sigma_p} \frac{1}{\Disc_p(R) \; \lvert\Aut{R}\rvert} 
	= \left( 1 - \frac{1}{p^2} \right) c_p.
\]
For each $p \notin \cP$, 
enumerating $\widehat{\Sigma}_p$ explicitly shows that $c_p=1$.
Substituting this into \cite{Bhargava-Shankar-Tsimerman2013}*{Theorem~8}
yields the result.
\end{proof}

\begin{corollary}\label{cor:d_L1mod3}
The number of nowhere totally ramified cubic fields $L$ with $|d_L|<X$
satisfying prescribed sign and $3$-adic congruence conditions below
such that if $d_L \equiv 1 \pmod{3}$ then $3$ splits completely in $L$ 
is $\alpha_3 X/\zeta(2) + o(X)$,
where $\alpha_3$ is given by the following table:
\begin{center}
\begin{tabular}{c||c|c}
& $d_L>0$ & $d_L<0$ \\ \hline \hline
$\textup{$d_L \equiv 1\pmod{3}$ }$ & $1/96$ & $1/32$ \\ 
$\textup{$d_L \equiv 2\pmod{3}$ }$ & $1/32$ & $3/32$ \\ 
$\textup{$d_L \equiv 3\pmod{9}$ }$ & $1/96$ & $1/32$ \\ 
$\textup{$d_L \equiv 6\pmod{9}$ }$ & $1/96$ & $1/32$\lefteqn{.} \\ 
\end{tabular}
\end{center}
\end{corollary}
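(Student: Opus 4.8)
The plan is to deduce everything from Theorem~\ref{thm:BST} applied with $\cP = \{3\}$. Take $\Sigma_p = \widehat{\Sigma}_p$ for every $p \ne 3$ (so no extra condition is imposed at those primes, the nowhere-totally-ramified hypothesis already putting $\cO_L \tensor \Z_p$ into $\widehat{\Sigma}_p$), and choose $\Sigma_3 \subseteq \widehat{\Sigma}_3$ to encode the prescribed $3$-adic congruence on $d_L$ together with the "$3$ splits completely'' requirement in the $d_L \equiv 1 \pmod 3$ row. Then $\prod_{p \in \cP} c_p = c_3$, the sign of $d_L$ is handled by the prefactor ($\tfrac{1}{12\zeta(2)}$ in part~(a) for $d_L>0$, $\tfrac{1}{4\zeta(2)}$ in part~(b) for $d_L<0$), and the whole corollary comes down to computing $c_3$ for the four choices of $\Sigma_3$.

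First I would enumerate $\widehat{\Sigma}_3$. A maximal cubic $\Z_3$-order that is not totally ramified is the maximal order in one of the cubic \'etale $\Q_3$-algebras
\[
\Q_3 \times \Q_3 \times \Q_3, \quad \Q_3 \times \Q_9, \quad \Q_{27}, \quad \Q_3 \times \Q_3(\sqrt{3}), \quad \Q_3 \times \Q_3(\sqrt{-3}),
\]
where $\Q_9, \Q_{27}$ are the unramified extensions of degrees $2$ and $3$ and $\Q_3(\sqrt{3}), \Q_3(\sqrt{-3})$ are the two ramified quadratic extensions of $\Q_3$. Since these quadratic extensions are tamely ramified ($e=2$ is prime to $3$), a short computation gives the pairs $(\Disc_3(R), \lvert \Aut R \rvert)$ as $(1,6)$, $(1,2)$, $(1,3)$, $(3,2)$, $(3,2)$ respectively; in particular $\sum_{R \in \widehat{\Sigma}_3} (\Disc_3(R)\lvert\Aut R\rvert)^{-1} = \tfrac16 + \tfrac12 + \tfrac13 + \tfrac16 + \tfrac16 = \tfrac43$, consistent with $c_3 = 1$ when $\Sigma_3 = \widehat{\Sigma}_3$.

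Next I would match each $R$ to the factorization type of $3$ in $L$ and hence to the residue of $d_L$. For a nowhere totally ramified $L$ we have $d_L = d_K$ by Lemma~\ref{L:S_3 extension}, so $v_3(d_L) \in \{0,1\}$, and the image in $\Gal(M/\Q) \isom S_3$ of a Frobenius (resp.\ of the inertia group) at $3$ determines both $\cO_L \tensor \Z_3$ and the behavior of $3$ in the quadratic resolvent $K$, which by Remark~\ref{R:cosets} fixes $d_L$ modulo $3$ or $9$. Concretely: $\Q_3^3$ corresponds to trivial Frobenius, so $3$ splits completely in $L$ and (as $3$ then splits in $K$) $d_L \equiv 1 \pmod 3$; $\Q_{27}$ corresponds to a $3$-cycle, which lies in $A_3$, so again $d_L \equiv 1 \pmod 3$ but $3$ is inert in $L$; $\Q_3 \times \Q_9$ corresponds to a transposition, which acts nontrivially on $K$, so $d_L \equiv 2 \pmod 3$; and $\Q_3 \times \Q_3(\sqrt{\pm 3})$ corresponds to inertia of order $2$, so $3$ ramifies in $K$ and $d_L$ is $3$ or $6 \bmod 9$ (one class for each radicand). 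Hence in the row $d_L \equiv 1 \pmod 3$ (where we also demand that $3$ split completely) we take $\Sigma_3 = \{\Q_3^3\}$; in the row $d_L \equiv 2 \pmod 3$ we take $\Sigma_3 = \{\Q_3 \times \Q_9\}$; and in each of the rows $d_L \equiv 3 \pmod 9$ and $d_L \equiv 6 \pmod 9$ we take $\Sigma_3$ to consist of the single corresponding partially ramified order. Then $c_3 = \tfrac34 \sum_{R \in \Sigma_3}(\Disc_3(R)\lvert\Aut R\rvert)^{-1}$ equals $\tfrac34 \cdot \tfrac16 = \tfrac18$ for the $d_L \equiv 1$, $d_L \equiv 3 \pmod 9$, and $d_L \equiv 6 \pmod 9$ rows, and $\tfrac34 \cdot \tfrac12 = \tfrac38$ for the $d_L \equiv 2 \pmod 3$ row, and substituting into Theorem~\ref{thm:BST}(a) for $d_L > 0$ and Theorem~\ref{thm:BST}(b) for $d_L < 0$ reproduces every entry of the table.

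The main obstacle is the third step: carrying out the local classification of maximal cubic $\Z_3$-orders and, for each one, correctly reading off both the discriminant exponent (in particular using tameness to get exponent $1$, not $2$, for the ramified quadratic extensions of $\Q_3$) and the induced residue class of $d_L = d_K$ via the image of the decomposition and inertia groups in $\Gal(M/\Q) \isom S_3$. Everything else is the bookkeeping of plugging $c_3$ and the archimedean prefactors into Theorem~\ref{thm:BST}.
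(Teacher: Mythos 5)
Your proposal is correct and follows essentially the same route as the paper: apply Theorem~\ref{thm:BST} with $\cP=\{3\}$, take $\Sigma_3=\{\Z_3\times\Z_3\times\Z_3\}$ for the split row and the single partially (un)ramified order $\Z_3\times\Z_3(\sqrt{d_L})$ for the other rows, and compute $c_3=\tfrac34\cdot\tfrac16=\tfrac18$ or $\tfrac34\cdot\tfrac12=\tfrac38$ before feeding into the archimedean prefactors $\tfrac{1}{12\zeta(2)}$ and $\tfrac{1}{4\zeta(2)}$. Your extra details (enumerating $\widehat{\Sigma}_3$, checking $c_3=1$ for the full set, tame discriminant exponent $1$, and matching local algebras to residues of $d_L=d_K$ via the resolvent) are all accurate and simply spell out what the paper leaves implicit.
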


\begin{proof}
We apply Theorem~\ref{thm:BST} with $\cP = \{ 3\}$ 
and with $\Sigma_3$ tailored to the row.
For the first row, let $\Sigma_3 \colonequals \{\Z_3 \times \Z_3 \times \Z_3\}$, so
\[
	c_3 = \frac{3}{4} \cdot \frac{1}{\Disc_3(\Z_3 \times \Z_3 \times \Z_3) \; \lvert \Aut\left(\Z_3 \times \Z_3 \times \Z_3 \right) \rvert} = \frac{3}{4} \cdot \frac{1}{6} = \frac{1}{8}.
\]
For each other row, 
let $\Sigma_3 \colonequals \{\Z_3 \times \Z_3(\sqrt{d_L})\}$,
and calculate $c_3$ similarly.
\end{proof}

\subsection{End of proof}

Corollary~\ref{cor:d_L1mod3} and Proposition~\ref{P:counting quadratic fields}
give the asymptotic behavior of the numerator and denominator, respectively,
in \eqref{E:quadratics over cubics}
(see the first sentence of Section~\ref{S:counting cubic fields}).
Thus, as $X \to \infty$, the ratio~\eqref{E:quadratics over cubics}
tends to $\alpha_3/\alpha_2$.
Following Corollary~\ref{C:class groups to cubic fields},
we multiply by $2$ and add $1$ to obtain the average order of
$\Cl(\cO_K[1/3])/3$ as $K$ varies over quadratic fields with $d_K \in \scriptD$.
For each signature, the answer is the same for each
of the three nontrivial cosets of $\Q_3^{\times 2}$,
so we combine them into a single entry 
in the table before Remark~\ref{R:cosets}.
This completes the proof of 
Conjecture~\ref{C:average class group heuristic} for $p=3$,
and hence also 
Conjectures \ref{C:average K-theory in residue classes conjecture}
and~\ref{C:average K-theory conjecture} for $p=3$
and Theorem~\ref{T:average of K_3}.

\section*{Acknowledgments} 

We thank the referee for taking the time to provide many helpful comments.
We thank David DeMark and Craig Westerland for pointing out an error in our original proof of Lemma~\ref{L:Hochschild-Serre}, which we have now corrected.

\begin{bibdiv}
\begin{biblist}


\bib{Bhargava-Kane-Lenstra-Poonen-Rains2015}{article}{
  author={Bhargava, Manjul},
  author={Kane, Daniel M.},
  author={Lenstra, Hendrik W., Jr.},
  author={Poonen, Bjorn},
  author={Rains, Eric},
  title={Modeling the distribution of ranks, Selmer groups, and Shafarevich-Tate groups of elliptic curves},
  journal={Camb. J. Math.},
  volume={3},
  date={2015},
  number={3},
  pages={275--321},
  issn={2168-0930},
  review={\MR {3393023}},
  label={BKLPR15},
}

\bib{Bhargava-Shankar-Tsimerman2013}{article}{
  author={Bhargava, Manjul},
  author={Shankar, Arul},
  author={Tsimerman, Jacob},
  title={On the Davenport-Heilbronn theorems and second order terms},
  journal={Invent. Math.},
  volume={193},
  date={2013},
  number={2},
  pages={439--499},
  issn={0020-9910},
  review={\MR {3090184}},
  doi={10.1007/s00222-012-0433-0},
}

\bib{Boston-Bush-Hajir2017}{article}{
   author={Boston, Nigel},
   author={Bush, Michael R.},
   author={Hajir, Farshid},
   title={Heuristics for $p$-class towers of imaginary quadratic fields},
   journal={Math. Ann.},
   volume={368},
   date={2017},
   number={1-2},
   pages={633--669},
   issn={0025-5831},
   review={\MR{3651585}},
   doi={10.1007/s00208-016-1449-3},
}

\bib{Boston-Wood2017}{article}{
   author={Boston, Nigel},
   author={Wood, Melanie Matchett},
   title={Non-abelian Cohen-Lenstra heuristics over function fields},
   journal={Compos. Math.},
   volume={153},
   date={2017},
   number={7},
   pages={1372--1390},
   issn={0010-437X},
   review={\MR{3705261}},
   doi={10.1112/S0010437X17007102},
}

\bib{Browkin2000}{article}{
  author={Browkin, Jerzy},
  title={Tame kernels of quadratic number fields: numerical heuristics},
  note={Dedicated to W\l odzimierz Sta\'s on the occasion of his 75th birthday},
  journal={Funct. Approx. Comment. Math.},
  volume={28},
  date={2000},
  pages={35--43},
  issn={0208-6573},
  review={\MR {1823991}},
}

\bib{Browkin-Schinzel1982}{article}{
  author={Browkin, J.},
  author={Schinzel, A.},
  title={On Sylow $2$-subgroups of $K_{2}O_{F}$\ for quadratic number fields $F$},
  journal={J. Reine Angew. Math.},
  volume={331},
  date={1982},
  pages={104--113},
  issn={0075-4102},
  review={\MR {647375}},
  doi={10.1515/crll.1982.331.104},
}

\bib{Buhler-Harvey2011}{article}{
   author={Buhler, J. P.},
   author={Harvey, D.},
   title={Irregular primes to 163 million},
   journal={Math. Comp.},
   volume={80},
   date={2011},
   number={276},
   pages={2435--2444},
   issn={0025-5718},
   review={\MR{2813369}},
}

\bib{Cheng-Guo-Qin2014}{article}{
  author={Cheng, XiaoYun},
  author={Guo, XueJun},
  author={Qin, HouRong},
  title={The densities for 3-ranks of tame kernels of cyclic cubic number fields},
  journal={Sci. China Math.},
  volume={57},
  date={2014},
  number={1},
  pages={43--47},
  issn={1674-7283},
  review={\MR {3146514}},
  doi={10.1007/s11425-013-4622-0},
}

\bib{Cohen2000}{book}{
  author={Cohen, Henri},
  title={Advanced topics in computational number theory},
  series={Graduate Texts in Mathematics},
  volume={193},
  publisher={Springer-Verlag, New York},
  year={2000},
  pages={xvi+578},
  isbn={0-387-98727-4},
  review={\MR {1728313}},
}

\bib{Cohen-Lenstra1984}{article}{
  author={Cohen, H.},
  author={Lenstra, H. W., Jr.},
  title={Heuristics on class groups of number fields},
  conference={ title={Number theory, Noordwijkerhout 1983}, address={Noordwijkerhout}, date={1983}, },
  book={ series={Lecture Notes in Math.}, volume={1068}, publisher={Springer}, place={Berlin}, },
  date={1984},
  pages={33--62},
  review={\MR {756082 (85j:11144)}},
  doi={10.1007/BFb0099440},
}

\bib{Cohen-Martinet1990}{article}{
  author={Cohen, Henri},
  author={Martinet, Jacques},
  title={\'Etude heuristique des groupes de classes des corps de nombres},
  language={French},
  journal={J. Reine Angew. Math.},
  volume={404},
  date={1990},
  pages={39--76},
  issn={0075-4102},
  review={\MR {1037430 (91k:11097)}},
}

\bib{Davenport-Heilbronn1971}{article}{
  author={Davenport, H.},
  author={Heilbronn, H.},
  title={On the density of discriminants of cubic fields. II},
  journal={Proc. Roy. Soc. London Ser. A},
  volume={322},
  date={1971},
  number={1551},
  pages={405--420},
  issn={0962-8444},
  review={\MR {0491593 (58 \#10816)}},
}

\bib{Delaunay2001}{article}{
  author={Delaunay, Christophe},
  title={Heuristics on Tate-Shafarevitch groups of elliptic curves defined over $\mathbb {Q}$},
  journal={Experiment. Math.},
  volume={10},
  date={2001},
  number={2},
  pages={191--196},
  issn={1058-6458},
  review={\MR {1837670 (2003a:11065)}},
}

\bib{Delaunay2007}{article}{
  author={Delaunay, Christophe},
  title={Heuristics on class groups and on Tate-Shafarevich groups: the magic of the Cohen-Lenstra heuristics},
  conference={ title={Ranks of elliptic curves and random matrix theory}, },
  book={ series={London Math. Soc. Lecture Note Ser.}, volume={341}, publisher={Cambridge Univ. Press}, place={Cambridge}, },
  date={2007},
  pages={323--340},
  review={\MR {2322355 (2008i:11089)}},
}

\bib{Delaunay-Jouhet2014a}{article}{
  author={Delaunay, Christophe},
  author={Jouhet, Fr{\'e}d{\'e}ric},
  title={$p^\ell $-torsion points in finite abelian groups and combinatorial identities},
  journal={Adv. Math.},
  volume={258},
  date={2014},
  pages={13--45},
  issn={0001-8708},
  review={\ \MR {3190422}},
  doi={10.1016/j.aim.2014.02.033},
}

\bib{Fouvry-Klueners2007}{article}{
  author={Fouvry, {\'E}tienne},
  author={Kl{\"u}ners, J{\"u}rgen},
  title={On the 4-rank of class groups of quadratic number fields},
  journal={Invent. Math.},
  volume={167},
  date={2007},
  number={3},
  pages={455--513},
  issn={0020-9910},
  review={\MR {2276261 (2007k:11187)}},
  doi={10.1007/s00222-006-0021-2},
}

\bib{Friedman-Washington1989}{article}{
  author={Friedman, Eduardo},
  author={Washington, Lawrence C.},
  title={On the distribution of divisor class groups of curves over a finite field},
  conference={ title={Th\'eorie des nombres}, address={Quebec, PQ}, date={1987}, },
  book={ publisher={de Gruyter}, place={Berlin}, },
  date={1989},
  pages={227--239},
  review={\MR {1024565 (91e:11138)}},
}

\bib{Fujiwara2002}{article}{
   author={Fujiwara, Kazuhiro},
   title={A proof of the absolute purity conjecture (after Gabber)},
   conference={
      title={Algebraic geometry 2000, Azumino (Hotaka)},
   },
   book={
      series={Adv. Stud. Pure Math.},
      volume={36},
      publisher={Math. Soc. Japan, Tokyo},
   },
   isbn={4-931469-20-5},
   date={2002},
   pages={153--183},
   review={\MR{1971516}},
   doi={10.2969/aspm/03610153},
}

\bib{Guo2009}{article}{
  author={Guo, Xuejun},
  title={On the 4-rank of tame kernels of quadratic number fields},
  journal={Acta Arith.},
  volume={136},
  date={2009},
  number={2},
  pages={135--149},
  issn={0065-1036},
  review={\MR {2475682}},
  doi={10.4064/aa136-2-3},
}

\bib{Hasse1930}{article}{
  author={Hasse, Helmut},
  title={Arithmetische Theorie der kubischen Zahlk\"orper auf klassenk\"orpertheoretischer Grundlage},
  language={German},
  journal={Math. Z.},
  volume={31},
  date={1930},
  number={1},
  pages={565--582},
  issn={0025-5874},
  review={\MR {1545136}},
  doi={10.1007/BF01246435},
}

\bib{Klagsbrun-preprint}{misc}{
  author={Klagsbrun, Zev},
  title={Davenport--Heilbronn theorems for quotients of class groups},
  date={2017-06-26},
  note={Preprint, \texttt {arXiv:1701.02834v2}\phantom {i}},
}

\bib{Klagsbrun-preprint2}{misc}{
  author={Klagsbrun, Zev},
  title={The average sizes of two-torsion subgroups in quotients of class groups of cubic fields},
  date={2017-09-19},
  note={Preprint, \texttt {arXiv:1701.02838v3}\phantom {i}},
}

\bib{Kovalenko-Levitskaja1975a}{article}{
  author={Kovalenko, I. N.},
  author={Levitskaja, A. A.},
  title={Limiting behavior of the number of solutions of a system of random linear equations over a finite field and a finite ring},
  language={Russian},
  journal={Dokl. Akad. Nauk SSSR},
  volume={221},
  date={1975},
  number={4},
  pages={778--781},
  issn={0002-3264},
  review={\MR {0380957}},
}

\bib{Milne1980}{book}{
   author={Milne, James S.},
   title={\'Etale cohomology},
   series={Princeton Mathematical Series},
   volume={No. 33},
   publisher={Princeton University Press, Princeton, NJ},
   date={1980},
   pages={xiii+323},
   isbn={0-691-08238-3},
   review={\MR{0559531}},
}

\bib{Poonen2017}{book}{
  author={Poonen, Bjorn},
  title={Rational points on varieties},
  series={Graduate Studies in Mathematics},
  volume={186},
  publisher={American Mathematical Society, Providence, RI},
  year={2017},
  pages={xv+337},
  isbn={978-1-4704-3773-2},
  review={\MR {3729254}},
}

\bib{Poonen-Rains2012-selmer}{article}{
  author={Poonen, Bjorn},
  author={Rains, Eric},
  title={Random maximal isotropic subspaces and Selmer groups},
  journal={J. Amer. Math. Soc.},
  volume={25},
  date={2012},
  number={1},
  pages={245--269},
  issn={0894-0347},
  review={\MR {2833483}},
  doi={10.1090/S0894-0347-2011-00710-8},
}

\bib{Prachar1958}{article}{
  author={Prachar, Karl},
  title={\"{U}ber die kleinste quadratfreie {Z}ahl einer arithmetischen {R}eihe},
  journal={Monatsh. Math.},
  volume={62},
  year={1958},
  pages={173-176}, 
  review={\MR {0092806}},
}

\bib{Tate1976}{article}{
  author={Tate, John},
  title={Relations between $K_{2}$ and Galois cohomology},
  journal={Invent. Math.},
  volume={36},
  date={1976},
  pages={257--274},
  issn={0020-9910},
  review={\MR {0429837 (55 \#2847)}},
}

\bib{Weibel2005}{article}{
  author={Weibel, Charles},
  title={Algebraic $K$-theory of rings of integers in local and global fields},
  conference={ title={Handbook of $K$-theory. Vol. 1, 2}, },
  book={ publisher={Springer, Berlin}, },
  date={2005},
  pages={139--190},
  review={\MR {2181823}},
  doi={10.1007/3-540-27855-9\_5},
}

\bib{Wood2018}{article}{
   author={Wood, Melanie Matchett},
   title={Cohen-Lenstra heuristics and local conditions},
   journal={Res. Number Theory},
   volume={4},
   date={2018},
   number={4},
   pages={Art. 41, 22},
   issn={2363-9555},
   review={\MR{3858473}},
   doi={10.1007/s40993-018-0134-x},
}

\end{biblist}
\end{bibdiv}

\end{document}